\DeclareSymbolFont{AMSb}{U}{msb}{m}{n}
\numberwithin{equation}{section}
\providecommand{\mr}[1]{\href{http://www.ams.org/mathscinet-getitem?mr=#1}{MR~#1}}
\providecommand{\zbl}[1]{\href{https://zbmath.org/?q=an:#1}{Zbl~#1}}
\providecommand{\arxiv}[1]{\href{https://arxiv.org/abs/#1}{arXiv:#1}}
\newcommand{\RR}{\mathbb{R}}
\newcommand{\RRd}{\mathbb{R}^d}
\newcommand{\NN}{\mathbb{N}}
\newcommand{\HH}{\mathcal{H}}
\newcommand{\LL}{\mathcal{L}}
\newcommand{\1}{\mathbf{1}}
\newcommand{\dd}{\mathrm{d}}
\newcommand{\calP}{{\mathcal P}}
\newcommand{\eps}{\varepsilon}
\newcommand{\xdots}{x_{1},\dots,x_{N}}
\newcommand{\ydots}{y_{1},\dots,y_{N}}
\newcommand{\ytildedots}{\tilde{y}_{1},\dots,\tilde{y}_{N}}
\newcommand{\Prob}{\mathcal{P}}
\newcommand{\xh}{\overline{x}_h}
\newcommand{\xp}{\overline{x}_p}
\newcommand{\xpS}{ \overline{x}^{S^\mathsf{c}}_{p}}
\newcommand{\xvec}{\mathbf{x}}
\newcommand{\yvec}{\mathbf{y}}
\newcommand{\ytildevec}{\tilde{\mathbf{y}}}
\newcommand{\xvecS}{\xvec_{S^\mathsf{c}}}
\newcommand{\yvecS}{\yvec_{S^\mathsf{c}}}
\newcommand{\ytildevecS}{\ytildevec_{S^\mathsf{c}}}
\newcommand{\xbar}{\overline{x}}
\newcommand{\nup}{\nu_p}
\newcommand{\gammabar}{\overline{\gamma}}
\newcommand{\gammap}{\gamma_p}
\newcommand{\hp}{h_p}
\DeclareMathOperator{\weaklystar}{\rightharpoonup\kern-2.2ex ^* \, \,}
\DeclareMathOperator{\sgn}{sign}
\DeclareMathOperator*{\argmin}{argmin}
\DeclareMathOperator{\spt}{spt}
\DeclareMathOperator{\diam}{diam}
\DeclareMathOperator{\id}{Id}
\def\XXint#1#2#3{{\setbox0=\hbox{$#1{#2#3}{\int}$ }
\vcenter{\hbox{$#2#3$ }}\kern-.6\wd0}}
\definecolor{darkred}{rgb}{0.4,0,0} 
\definecolor{darkgreen}{rgb}{0,.4,0}
\theoremstyle{plain}
\newtheorem{theorem}{Theorem}[section]
\newtheorem{proposition}[theorem]{Proposition}
\newtheorem{corollary}[theorem]{Corollary}
\newtheorem{lemma}[theorem]{Lemma}
\newtheorem*{theorem*}{Theorem}
\theoremstyle{definition}
\newtheorem{definition}[theorem]{Definition}
\newtheorem{remark}[theorem]{Remark}
\begin{document}
\title{$p$-Wasserstein barycenters}
\author[C.~Brizzi]{Camilla Brizzi}
\address[C.~Brizzi]{Technische Universität München, Department of Mathematics, Boltzmannstraße 3, 85748 Garching, Germany}
\email{camilla.brizzi@tum.de}

\author[G.~Friesecke]{Gero Friesecke}
\address[G.~Friesecke]{Technische Universität München, Department of Mathematics, Boltzmannstraße 3, 85748 Garching, Germany}
\email{friesecke@tum.de}

\author[T.~Ried]{Tobias Ried}
\address[T.~Ried]{Max-Planck-Institut für Mathematik in den Naturwissenschaften, Inselstraße 22, 04103 Leipzig, Germany \\
Current address: School of Mathematics, Georgia Institute of Technology, Atlanta, GA 30332, United States of America}
\email{tobias.ried@gatech.edu}
\keywords{Wasserstein barycenter, optimal transport,  multi-marginal optimal transport}
\subjclass[2020]{Primary 49Q20; Secondary 49J40, 49K21} 
\date{\today}
\thanks{\emph{Funding information}: Deutsche Forschungsgemeinschaft -- Project-ID 195170736 -- TRR109.\\[1ex]
\textcopyright 2024 by the authors. Faithful reproduction of this article, in its entirety, by any means is permitted for noncommercial purposes.}
\begin{abstract}
We study  barycenters of $N$ probability measures on $\RR^d$ with respect to the $p$-Wasserstein metric ($1<p<\infty$). 
We prove that \\
    -- $p$-Wasserstein barycenters of absolutely continuous measures are unique, and again absolutely\\
    \textcolor{white}{--} continuous \\
    -- $p$-Wasserstein barycenters admit a multi-marginal formulation \\
    -- the optimal multi-marginal plan is unique and of Monge form if the marginals are absolutely \\
    \textcolor{white}{--} continuous, and its support has an explicit  parametrization as a graph over any marginal space. \\
This extends the Agueh--Carlier theory of Wasserstein barycenters \cite{AC11} to exponents $p\neq 2$.
A key ingredient is a quantitative injectivity estimate for the (highly non-injective) map from $N$-point configurations to their $p$-barycenter on the support of an optimal multi-marginal plan. We also discuss the statistical meaning of $p$-Wasserstein barycenters in one dimension.
\end{abstract}
\maketitle
\tableofcontents
\section{Introduction}
Wasserstein barycenters are an important generalization of the classical notion of barycenters of points in $\RR^d$ or on a Riemannian manifold to the space of probability measures. Their applications include the interpolation of probability measures (by varying the weights) and the computation of a representative summary of input datasets (by using equal weights). Wasserstein barycenters are therefore a useful tool in data science, statistics, and image processing. 

In this article we study the properties of $p$-Wasserstein barycenters, the natural generalization of $2$-Wasserstein barycenters introduced by Agueh and Carlier \cite{AC11} to the $p$-Wasserstein distance for $1<p<\infty$. These are weighted averages of probability measures on $\RR^d$ with respect to the Wasserstein $W_p$ metric, defined for two probability measures $\mu$ and $\rho$ via
 \begin{equation*}
     W_p(\mu,\rho) \coloneq \min_{\eta\in\Pi(\mu,\rho)}\left(\int_{\RR^{2d}}|x-y|^p \, \dd\eta(x,y)\right)^{\frac1p},
 \end{equation*}
where $\Pi(\mu,\rho)$ denotes the set of transport plans from $\mu$ to $\rho$, i.e.\
\[\Pi(\mu, \rho) \coloneq \left\{ \eta \in \Prob(\RR^{2d}): \pi^1_{\#}\eta = \mu, \pi^2_{\#}\eta = \rho\right\}.\]

There are two ways to define $p$-Wasserstein barycenters. 
Let $$\mu_1, \dots, \mu_N \in \mathcal{P}_p(\RR^d)= \left\{\mu\in\mathcal{P}(\RR^d): \int_{\RR^d} |x|^p \,\dd\mu(x) < \infty\right\}$$ be probability measures on $\RR^d$ with finite $p$\textsuperscript{th} moments. 
\begin{enumerate}
	\item Most natural from the point of view of the metric structure that the Wasserstein distance $W_p$ puts on $\mathcal{P}_p(\RR^d)$, is to define 
		\begin{align}\label{eq:pW-barycenter}
			\mathrm{bar}_p((\mu_i,\lambda_i)_{i=1, \dots, N}) \coloneq	 \argmin_{\nu \in \mathcal{P}_p(\RR^d)} \sum_{i=1}^N \lambda_i W_p^p(\mu_i,\nu),
		\end{align}
		for weights $\lambda_i>0$ such that $\sum_{i=1}^N \lambda_i = 1$. 
	This definition can be seen as a generalization of the Fréchet mean of points in a geodesic space, see also \cite{LL17}. 
	\item Another way of defining the $p$-Wasserstein barycenter of the probability measures $(\mu_i)_{i=1,\dots, N}$ is by looking at a multi-marginal optimal transport problem with the measures $\mu_i$ as marginal constraints, and cost function $c_p: \RR^d \times \dots \times \RR^d \to [0,\infty)$ given by 
		\begin{align}\label{eq:p-cost}
			c_p(x_1, \dots, x_N) \coloneq \min_{z\in\RR^d} \sum_{i=1}^N \lambda_i |x_i-z|^p = \sum_{i=1}^N \lambda_i |x_i - \xbar_p(x_1, \dots, x_N)|^p,
		\end{align}
		where the classical \emph{$p$-barycenter} 
		\begin{align}\label{eq:p-barycenter}
			\xbar_p(x_1, \dots, x_N) \coloneq \argmin_{z\in\RR^d} \sum_{i=1}^N \lambda_i |x_i-z|^p
		\end{align}
  of $\xvec=(x_1, \dots, x_N)$ is the unique\footnote{Existence and uniqueness of this point follows from strict convexity and coercivity of the function $w\mapsto|w|^p$.} point where the minimum in \eqref{eq:p-cost} is attained. 
		Then consider the multi-marginal optimal transport (MMOT) problem 
		\begin{align}\label{MMpbary}\tag{MM-$p$-bar}
			C_{p\text{-MM}} \coloneq \min_{\gamma \in \Pi(\mu_1, \dots, \mu_N)} \int_{\RR^{Nd}} c_p(x_1, \dots, x_N) \,\dd\gamma(\xdots),
		\end{align}
		where\footnote{We denote by $\pi^i$ the projection on the $i^{\text{th}}$ marginal space, $\pi^i:\RR^{Nd}\to\RR$, $\pi^i(\xvec)=x_i$.}  
		\begin{equation*}
			\Pi(\mu_1,\dots,\mu_N):=\{\gamma\in\Prob(\RR^{Nd}) \, : \, \pi^i_\sharp \gamma = \mu_i, \ \text{for all} \ i=1,\dots,N \}
		\end{equation*}
		is the set of admissible transport plans between the marginals $\mu_1, \dots, \mu_N$.	
    Note that $C_{p\text{-MM}}<+\infty$ if $\mu_1, \dots,\mu_N \in \mathcal{P}_p(\RR^d)$.\footnote{\label{foot:finitecost}Indeed, since $\min_{z\in\RR^d} \sum_{i=1}^N\lambda_i |x_i-z|^p \leq  \sum_{i=1}^N\lambda_i |x_i|^p$ for all $\xvec \in \RR^{Nd}$, it follows that 
    \begin{align*}
        \int_{\RR^{Nd}} c_p(\xvec)\,\dd\gamma(\xvec) 
        \leq \sum_{i=1}^N \lambda_i \int_{\RR^{Nd}} |x_i|^p\,\dd\gamma(\xvec)
        = \sum_{i=1}^N \lambda_i \int_{\RR^{d}} |x_i|^p\,\dd\mu_i(x_i) 
        <+\infty.
    \end{align*}
    }
\end{enumerate}
Standard results in optimal transport theory, see for instance \cite{Vil09,San15}, which apply also to the multi-marginal setting, guarantee the existence of an optimizer $\gamma_p$. For a recent overview of the multi-marginal theory we refer to \cite{F24}, in particular Theorems~3.1 and~3.4 therein regarding  existence and  Kantorovich duality respectively.

Any optimizer $\gamma_p$ has the property that $(\xbar_p)_{\#}\gamma_p$ is a minimizer of \eqref{eq:pW-barycenter}.
In fact, define 
		\begin{align}\label{C2Mpbary}\tag{C2M-$p$-bar}
		    C_{p\text{-C2M}} \coloneq  \min_{\nu \in \mathcal{P}_p(\RR^d)} \sum_{i=1}^N \lambda_i W_p^p(\mu_i,\nu)
		\end{align}
then the two problems are equivalent in the following sense: 
\begin{proposition}\label{prop:equivalence}
    For any $\mu_1,\dots,\mu_N\in\Prob_p(\RR^d)$ we have that 
	\begin{equation*}
		C_{p\text{-MM}}=C_{p\text{-C2M}}.
	\end{equation*}
	Moreover, $\nup$ is a minimizer for the problem \eqref{C2Mpbary} if and only if $\nup={\xp}_\sharp\gammap$, for some minimizer $\gammap$ of the problem \eqref{MMpbary}.
\end{proposition}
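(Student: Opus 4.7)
The plan is to prove the equality of values $C_{p\text{-MM}}=C_{p\text{-C2M}}$ by a double inequality, and then extract the correspondence of minimizers by tracking the equality cases. Both inequalities reduce to standard transport-plan manipulations: on one side pushing an MMOT plan forward via the barycenter map $\xp$, on the other side using the gluing lemma applied to optimal two-marginal couplings.

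For the inequality $C_{p\text{-C2M}}\le C_{p\text{-MM}}$, I would take any admissible $\gamma\in\Pi(\mu_1,\dots,\mu_N)$, set $\nu\coloneq(\xp)_\sharp\gamma$, and push $\gamma$ forward under $\xvec\mapsto(x_i,\xp(\xvec))$ to produce an admissible coupling in $\Pi(\mu_i,\nu)$ for each $i$. Bounding $W_p^p(\mu_i,\nu)$ by this competitor and summing with weights $\lambda_i$ yields exactly $\int c_p(\xvec)\,\dd\gamma(\xvec)$ by the definition of $c_p$ in \eqref{eq:p-cost}. (The fact that $\nu\in\Prob_p(\RR^d)$ follows from a bound on $|\xp(\xvec)|$ in terms of $|x_1|,\dots,|x_N|$, in the spirit of footnote~\ref{foot:finitecost}.) For the reverse inequality I would invoke the gluing lemma: given $\nu\in\Prob_p(\RR^d)$ and $W_p$-optimal plans $\eta_i\in\Pi(\mu_i,\nu)$, disintegrate each $\eta_i$ against its second marginal as $\eta_i=\int \eta_i^y\otimes\delta_y\,\dd\nu(y)$ and glue over $\nu$ to form
\[\tilde\gamma\coloneq\int(\eta_1^y\otimes\cdots\otimes\eta_N^y)\otimes\delta_y\,\dd\nu(y)\in\Prob(\RR^{(N+1)d}).\]
Its projection $\gamma$ onto the first $N$ copies of $\RR^d$ lies in $\Pi(\mu_1,\dots,\mu_N)$, and the pointwise bound $c_p(\xvec)\le\sum_i\lambda_i|x_i-y|^p$ (immediate from \eqref{eq:p-cost}), together with Fubini, delivers $\int c_p\,\dd\gamma\le\sum_i\lambda_iW_p^p(\mu_i,\nu)$.

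For the correspondence of minimizers, one direction is immediate from the first construction: if $\gammap$ minimizes \eqref{MMpbary}, then $(\xp)_\sharp\gammap$ achieves $C_{p\text{-C2M}}$. Conversely, given a minimizer $\nup$ of \eqref{C2Mpbary}, the glued $\gamma$ produced above is automatically optimal for \eqref{MMpbary}; the equalities must then propagate into the pointwise bound, giving $c_p(\xvec)=\sum_i\lambda_i|x_i-y|^p$ for $\tilde\gamma$-a.e.\ $(\xvec,y)$. Uniqueness of the classical $p$-barycenter (guaranteed by strict convexity of $w\mapsto|w|^p$, as noted in the footnote after \eqref{eq:p-barycenter}) then forces $y=\xp(\xvec)$ $\tilde\gamma$-a.e., yielding $\nup=\pi^{N+1}_\sharp\tilde\gamma=(\xp)_\sharp\gamma$. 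This final pushforward identification is the only subtle step; every other step reduces to routine manipulation of transport plans, Fubini, and disintegration.
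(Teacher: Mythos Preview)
Your proposal is correct and follows essentially the same approach as the paper: both directions of the value equality are obtained by the same pushforward-via-$\xp$ and gluing-over-$\nu$ constructions, and the minimizer correspondence is extracted by tracking the equality case in the pointwise inequality $c_p(\xvec)\le\sum_i\lambda_i|x_i-y|^p$, using uniqueness of the classical $p$-barycenter to force $y=\xp(\xvec)$. The paper's argument matches yours step by step, including the disintegration-and-gluing construction of the $(N+1)$-marginal measure.
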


In particular, the above equivalence result ensures existence for the problem \eqref{C2Mpbary}. Alternatively, existence for such a problem, without relying on MMOT, can be found in \cite{CE10}. If the measures $\mu_1, \dots, \mu_N$ are compactly supported in $\RR^d$, existence can also be obtained from the Hopf--Rinow--Cohn--Vossen Theorem (see, e.g., \cite[Theorem 2.5.28]{BBI01}) based on the fact that the geodesic space $(\mathcal{P}(X), W_p)$ is \emph{locally compact} if $X\subset \RR^d$ compact, see \cite{LL17} for details.
For $N=2$, by varying the weight in the $p$-Wasserstein barycenter of two probability measures, the resulting curve in the space of probability measures  coincides with the $p$-Wasserstein geodesic (displacement interpolation) between the two measures.  

The notion of barycenters of measures can also be extended to allow for measures with unequal masses by replacing the $p$-Wasserstein distance by the \emph{Hellinger-Kantorovich (HK) distance} in \eqref{C2Mpbary}. The resulting HK-barycenters can equivalently be characterized through a multi-marginal optimal transport problem, see \cite{CP21,FMS21} for further details. 

An advantage of the formulation as MMOT problem is that it is a linear programming problem, albeit a high-dimensional one. 
An important question therefore concerns the dimension and structure of the support of the optimal transport plan. Results in this direction have been obtained in the following situations: 
\begin{enumerate}
	\item Gangbo and \'Swie\k{}ch \cite{GS98} proved sparsity of the support of the optimal coupling $\gamma_2$ for a pairwise quadratic cost, which is equivalent to \eqref{MMpbary} for $p=2$, if all the marginals $\mu_1, \dots, \mu_N$ are absolutely continuous with respect to the Lebesgue measure on $\RR^d$. More precisely, they show that $\gamma_2$ has Monge structure, i.e.\ there exist functions $T_i:X_1 \to X_i$, $i=2, \dots, N$, such that
	\begin{equation}\label{eq:GSform}
		\gamma_2 = (\id, T_2, \dots, T_N)_{\#}\mu_1,
	\end{equation}
	which amounts to a dramatic decrease of dimensionality. The connection to $2$-Wasserstein barycenters was later made by Agueh and Carlier in \cite{AC11}. In view of the $2$-marginal optimal transport theory, the Gangbo--\'Swie\k{}ch result can be viewed as a multi-marginal version of Brenier's theorem \cite{Bre91}. 
    
    Their sparsity result has been slightly extended by Heinich \cite{Hei02} (to cost function of the form $c(\xvec) = H(\sum_{i=1}^N x_i)$ with $H:\RR^d \to \RR$ strictly concave) and Carlier \cite{Car03} ($d=1$, $H$ strictly $2$-monotone). For an extension to a large class of smooth cost functions which satisfy strong conditions on the second derivatives of the cost function see Pass \cite{Pas11}. 

     \item In our companion article \cite{BFR24-h} we prove sparsity of minimizers of the MMOT problem associated to $h$-Wasserstein barycenters for interactions $h:\RR^d \to [0,\infty)$ that are strictly convex $\mathcal{C}^2$ functions with non-degenerate Hessian. 
    The same result also follows from a more abstract result by Pass \cite{Pas14}, whose approach relies on deep results regarding the local dimension of the support of the optimal plan \cite{Pas12}. More precisely, he shows that a certain bilinear form depending on some (but not all) of the second derivatives of the cost function $c$ in a point of the support of the optimal plan can be used to derive upper bounds on the dimension of a Lipschitz manifold locally containing the support of the optimal transport plan. 
    
    \item Kim and Pass \cite{KP14} introduced the notion of $c$-splitting sets\footnote{Since our results do not depend on this concept, we refer the reader to the original article for details. Note that even though the condition of twistedness on $c$-splitting sets gives a general condition on the cost, it is often not easy to verify.} and proved that if the cost function $c$ is semiconcave and twisted on splitting sets, then the optimal plan is of Monge form provided that one of the marginals is absolutely continuous. 

\end{enumerate}

Here, we prove Monge structure and uniqueness of the optimal plan $\gammap$ for \eqref{MMpbary}. 
Our results in \cite{BFR24-h}, together with the current article on $p$-Wasserstein barycenters, therefore provide, at the same time, a \emph{multi-marginal version of the Gangbo-McCann theorem} \cite{GMC96} and a \emph{general $1<p<\infty$ version of the Agueh-Carlier theory} of Wasserstein barycenters \cite{AC11}. Our approach is different from that in \cite{AC11}, being based on the multi-marginal rather than the coupled two-marginal formulation. 

\begin{theorem}\label{th:sparsityplan} 
Let $\mu_1, \dots, \mu_N \in \mathcal{P}_p(\RRd)$. Then the following holds: 
\begin{enumerate}[label=\textbf{\emph{Part \Alph*}},leftmargin=0pt,labelsep=*,itemindent=*,itemsep=10pt]
\item \label{th:sparsity-A} For any $1<p<\infty$, if  $\mu_1, \dots, \mu_N \ll \LL^d$,
then there exists a unique optimal plan $\gammap$ for the problem \eqref{MMpbary}, and there exist measurable maps $T_i:\mathrm{supp}\mu_1\to \mathrm{supp}\mu_i$, $i=2, \dots, N$, such that
\begin{equation}\label{eq:monge-degenerate}
    \gammap=(\id,T_2,\dots,T_N)_\sharp\mu_1.
\end{equation}
Moreover, if $p\ge 2$, the same result holds with the weaker assumption that $\mu_1\ll \LL^d$.\footnote{Clearly, one can choose any other marginal to be absolutely continuous with respect to Lebesgue measure on $\RR^d$. In this case $\spt\gammap$ is parameterized as a graph over the support of that marginal.}
\item  \label{th:sparsity-B} For any $p>1$, if $\mu_1,\dots,\mu_N \ll \LL^d$, then there exist a.e.\ differentiable functions $\varphi_i: \mathrm{supp}\mu_i \to\RR$ such that
\begin{equation}\label{eq:mapT-degenerate}
    T_i=S_i^{-1} \circ S_1, \quad \text{with} \quad S_i \coloneq \id - (p\lambda_i)^{-\frac{1}{p-1}} |D\varphi_i|^{q-2} D\varphi_i, 
\end{equation} for any $i=2,\dots,N$, where $\frac 1p + \frac 1q=1$. 
\end{enumerate}
\end{theorem}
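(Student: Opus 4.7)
The plan is to pass to the Kantorovich dual of the MMOT problem \eqref{MMpbary}, derive pointwise optimality conditions on $\mathrm{supp}\,\gammap$, and use them to construct the Monge maps explicitly as in Part~\ref{th:sparsity-B}; the Monge structure and uniqueness of Part~\ref{th:sparsity-A} then follow by a measure-theoretic argument exploiting the absolute continuity hypotheses.

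First I would invoke multi-marginal Kantorovich duality (e.g.\ \cite[Theorem~3.4]{F24}) to obtain potentials $\varphi_1,\dots,\varphi_N$ with $\varphi_i\in L^1(\mu_i)$, $\sum_{i=1}^N\varphi_i(x_i)\le c_p(\xvec)$, and equality $\gammap$-a.e. Passing to $c_p$-conjugates yields local semiconcavity of the $\varphi_i$ for $p\ge 2$ and a local Hölder estimate for $1<p<2$, hence $\LL^d$-a.e.\ differentiability. At any point $(\xdots)\in\mathrm{supp}\,\gammap$ where $\varphi_i$ is differentiable at $x_i$, the function $x_i\mapsto c_p(\xdots)-\varphi_i(x_i)$ is minimized at $x_i$. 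Applying the envelope theorem to $c_p$ (using that $\xp$ is itself the interior minimizer in \eqref{eq:p-cost}, so derivatives of $\xp$ drop out) produces the first-order condition
\begin{equation*}
    D\varphi_i(x_i) = p\lambda_i\,|x_i-\xp|^{p-2}(x_i-\xp).
\end{equation*}
Inverting the radial map $u\mapsto|u|^{p-2}u$ on $\RRd$ (a continuous bijection for $p>1$ with inverse $v\mapsto|v|^{(2-p)/(p-1)}v$) gives $\xp=S_i(x_i)$ with $S_i$ exactly as in \eqref{eq:mapT-degenerate}, which establishes the formula of Part~\ref{th:sparsity-B} conditional on the Monge property.

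For the Monge property, absolute continuity of $\mu_1$ guarantees that $\varphi_1$ is differentiable $\mu_1$-a.e., so $\xp=S_1(x_1)$ is uniquely determined by $x_1$ for $\mu_1$-a.e.\ $x_1$. To produce $T_i(x_1)$ one must then solve $S_i(x_i)=S_1(x_1)$ uniquely in $x_i$. Here the quantitative injectivity estimate flagged in the abstract enters: although the map $\xvec\mapsto\xp(\xvec)$ is highly non-injective globally (its fibers are $d(N-1)$-dimensional affine subspaces of $\RR^{Nd}$), on $\mathrm{supp}\,\gammap$ the constraints imposed jointly by the potentials force injectivity. When $p\ge 2$ the prefactor $|D\varphi_i|^{(2-p)/(p-1)}$ is bounded and $S_i$ continuous, so $\mu_1\ll\LL^d$ alone propagates absolute continuity along $S_i^{-1}\circ S_1$ and pins down $T_i$ on a set of full $\mu_1$-measure. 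When $1<p<2$ the same prefactor blows up on $\{D\varphi_i=0\}$, and controlling that exceptional set requires $\mu_i\ll\LL^d$ for every $i$ so that $\varphi_i$ is differentiable $\mu_i$-a.e. Uniqueness of $\gammap$ then follows by the standard convex-combination argument: given another optimizer $\tilde\gammap$, the plan $\tfrac12(\gammap+\tilde\gammap)$ is also optimal and Monge by what precedes, forcing $\gammap=\tilde\gammap$.

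The main obstacle I expect is producing the quantitative injectivity estimate used above: quantifying how the support of $\gammap$ sits transversally to the fibers of the barycenter map $\xvec\mapsto\xp(\xvec)$, and showing that this transversality is strong enough to survive the singular behavior of $S_i$ near $\{D\varphi_i=0\}$ when $p<2$. This is also the point at which the dichotomy between the hypotheses on the marginals for $p\ge 2$ versus $1<p<2$ should emerge naturally from the analysis rather than being imposed by hand.
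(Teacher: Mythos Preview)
Your first-order analysis via duality is correct and indeed matches the paper's derivation of the $S_i$ formula in Part~\ref{th:sparsity-B}. The genuine gap is in your route to Part~\ref{th:sparsity-A}. You propose to obtain the Monge maps by inverting $S_i$ directly, but for $p\ge 2$ with only $\mu_1\ll\LL^d$ this breaks down: the map $S_i$ is defined through $D\varphi_i$, and if $\mu_i$ is singular there is no reason for $\varphi_i$ to be differentiable $\mu_i$-a.e., so $S_i$ is not even defined on a set of full $\mu_i$-measure. Continuity of the prefactor does not help here. The paper circumvents this entirely by reversing the order of the argument: it first uses the injectivity estimate (the inverse Lipschitz bound of Lemma~\ref{lem:firstinequality} on $c_p$-monotone sets, combined with the partition into the sets $D_S$) to prove that the barycenter $\nu_p=(\xp)_\sharp\gammap$ is absolutely continuous (Theorems~\ref{th:absolutecontinuitypsmall} and \ref{th:absolutecontinuitypbig}). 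Once $\nu_p\ll\LL^d$, the two-marginal Gangbo--McCann theorem supplies optimal maps $f_i:\spt\nu_p\to\spt\mu_i$ regardless of whether $\mu_i$ is absolutely continuous, and $T_i=f_i\circ g_1$ with $g_1=f_1^{-1}$. Only afterwards, and only under the stronger hypothesis that all marginals are absolutely continuous, are the $f_i$ identified with $S_i^{-1}$ via the potentials. So the injectivity estimate is not used to invert $S_i$ on the support; its role is to push absolute continuity \emph{forward} to $\nu_p$, which then lets the two-marginal theory produce the inverse maps.

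Two smaller points. For $1<p<2$ a local H\"older estimate would not give $\LL^d$-a.e.\ differentiability (H\"older functions can be nowhere differentiable); the paper establishes that the $\lambda_ih_p$-conjugates are in fact locally Lipschitz for all $1<p<\infty$ (Lemma~\ref{lem:equi-Lipschitz}), which is what is needed for Rademacher. And the fibers of $\xvec\mapsto\xp(\xvec)$ are affine only for $p=2$; for general $p$ they are nonlinear level sets of $\sum_i\lambda_i|x_i-z|^{p-2}(x_i-z)$.
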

\begin{remark}\label{rem: coupled two marg pot}
The functions $\varphi_1,\dots,\varphi_N$ are Kantorovich potentials, i.e.\ optimizers of the dual problem (see Theorem~\ref{th:duality}) associated to \eqref{MMpbary}, whose existence is proved in Theorem~\ref{th:duality}. Moreover, as proved in Corollary~\ref{cor: coupled two marg pot}, each $\varphi_i$, together with its $\lambda_i|\cdot|^p$-conjugate $\varphi^{p,\lambda_i}$, is a Kantorovich potential for $\lambda_i$ times the $p$-Wasserstein distance between the measure $\mu_i$ and the $p$-Wasserstein barycenter $\nup=\mathrm{bar}_p((\mu_i,\lambda_i)_{i=1, \dots, N})$, i.e. for $\lambda_iW_p(\mu_i,\nup)$. 
Therefore the functions $S_i$ are nothing else than the optimal maps for that problem.

The existence of optimizers $\varphi_i \in L^1_{\mu_i}$ has been obtained by Kellerer \cite{K84} for a large class of cost functions, including the case of $p$-costs considered here. We give a more direct proof tailored to the $p$-case (inspired by \cite{AC11}), which automatically yields the continuity and almost-everywhere differentiability of the optimizers. 

Other examples for the existence of optimal potentials in a multi-marginal optimal transport problem with a cost function not included in the class considered by Kellerer are, for instance, the pairwise Coulomb cost with equal marginals in \cite{DP15}.
\end{remark}

Our strategy for the proof of Theorem~\ref{th:sparsityplan} is to prove the absolute continuity of the $p$-Wasserstein barycenter $\nup$ of the measures $\mu_1, \dots, \mu_N$ if one, respectively all, the measures are absolutely continuous with respect to Lebesgue measure $\LL^d$ on $\RR^d$. In fact, absolute continuity does not require optimality of the transport plan $\gammap$ in the multi-marginal formulation, but it holds under the weaker condition of $c_p$-monotonicity of its support, see Definition~\ref{def: c-monotonicity} and Proposition~\ref{prop:optimalplanarecmonotone} for details. 

\begin{theorem}\label{th:absolutecontinuity}
Let $1<p<\infty$ and $\gamma_p \in \Pi(\mu_1, \dots, \mu_N)$ have $c_p$-monotone support. Then under the condition that $\mu_1, \dots, \mu_N \ll \LL^d$
there holds
\begin{equation*}
    (\overline{x}_{p})_{\sharp}\gamma_p\ll\LL^d.
\end{equation*}
In particular, by Propostion~\ref{prop:equivalence} it follows that the $p$-Wasserstein barycenter $\nup= \mathrm{bar}_p((\mu_i,\lambda_i)_{i=1, \dots, N})$ of the measures $\mu_1, \dots, \mu_N$ with weights $\lambda_1, \dots, \lambda_N$ is absolutely continuous with respect to Lebesgue measure on $\RR^d$.\\
Moreover, if $p\ge 2$  the same result holds with the weaker assumption that only one of the measures is absoultely continuous with respect to Lebesgue measure on $\RR^d$.
\end{theorem}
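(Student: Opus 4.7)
The aim is to show that $(\xp)_\sharp\gammap$ does not charge any Lebesgue-null set $A\subset\RR^d$. Setting $\Gamma:=\mathrm{supp}\gammap$ and using $(\pi_i)_\sharp\gammap=\mu_i$, one has the elementary bound
\[
(\xp)_\sharp\gammap(A) \;=\; \gammap\big(\xp^{-1}(A)\cap\Gamma\big) \;\leq\; \mu_i\big(\pi_i(\xp^{-1}(A)\cap\Gamma)\big)
\]
for any $i\in\{1,\dots,N\}$. Hence, assuming $\mu_1\ll\LL^d$, it is enough to prove that $\pi_1(\xp^{-1}(A)\cap\Gamma)\subset\RR^d$ is Lebesgue-null whenever $A$ is. This reduces the statement to a geometric question about how $\xp$ restricted to $\Gamma$ interacts with the projection $\pi_1$.

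The central ingredient is a quantitative injectivity estimate for $\xp|_\Gamma$. For $\xvec,\yvec\in\Gamma$ and any index $j$, the $c_p$-monotonicity inequality for the swap at coordinate $j$, combined with the suboptimality bound $c_p(x_1,\dots,y_j,\dots,x_N)\leq\sum_{i\neq j}\lambda_i|x_i-\xp(\xvec)|^p+\lambda_j|y_j-\xp(\xvec)|^p$ (in which $\xp(\xvec)$ is merely used as a test point for the swapped configuration, and similarly for $\yvec$), yields
\[
|y_j-\xp(\xvec)|^p - |x_j-\xp(\xvec)|^p \;\geq\; |y_j-\xp(\yvec)|^p - |x_j-\xp(\yvec)|^p .
\]
Expressing both sides through the integral representation $|u+h|^p-|u|^p=p\int_0^1|u+th|^{p-2}(u+th)\cdot h\,\dd t$, applied once in each direction, turns the difference into a bilinear integral in $\xp(\yvec)-\xp(\xvec)$ and $x_j-y_j$ with kernel of size $\sim |\cdot|^{p-2}$. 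For $p\geq 2$ the uniform convexity of $|\cdot|^p$ then produces a coercive lower bound on bounded pieces of $\Gamma$ of the form $|\xp(\xvec)-\xp(\yvec)|\geq c_{p,R}\,\omega(|x_1-y_1|)$ with $\omega$ strictly increasing and $\omega(0)=0$. Making this mean-value calculation rigorous, and in particular handling the degenerate zone where $x_j-\xp(\xvec)\to 0$, is the main technical obstacle.

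Granted such an injectivity estimate, a covering argument finishes the proof: cover a Lebesgue-null $A$ by balls $B(z_k,r_k)$ with $\sum_k r_k^d<\varepsilon$; choosing for each $k$ a reference point $\xvec_k\in\Gamma\cap\xp^{-1}(\overline{B(z_k,r_k)})$, the injectivity bound places $\pi_1(\Gamma\cap\xp^{-1}(B(z_k,r_k)))$ inside a ball around $\pi_1(\xvec_k)$ of controlled radius $\omega^{-1}(2r_k)$, and summing volumes gives $\LL^d(\pi_1(\xp^{-1}(A)\cap\Gamma))\to 0$ as $\varepsilon\to 0$. Combined with $\mu_1\ll\LL^d$ this settles the case $p\geq 2$ under the weaker hypothesis that a single marginal is absolutely continuous. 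For the full range $1<p<\infty$, the lack of uniform convexity of $|\cdot|^p$ at the origin when $p<2$ prevents a clean injectivity estimate purely in $x_1$; the plan in that regime is to retain the bilinear lower bound for \emph{every} coordinate $j$, split $\Gamma$ into a non-degenerate part (where the above argument applies) and a degenerate part on which $x_j-\xp(\xvec)\to 0$ for some $j$, and to discard the degenerate part symmetrically using absolute continuity of \emph{all} marginals $\mu_1,\dots,\mu_N$, which explains why the weaker hypothesis is unavailable for $1<p<2$.
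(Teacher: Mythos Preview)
Your overall architecture --- bound $(\xp)_\sharp\gammap(A)$ by $\mu_i\big(\pi_i(\xp^{-1}(A)\cap\Gamma)\big)$ and then run a covering argument driven by a quantitative injectivity estimate for $\xp|_\Gamma$ --- is exactly the scheme the paper follows. The gap is in the derivation of that injectivity estimate.

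Your single--swap inequality is correct: combining $c_p$-monotonicity at coordinate $j$ with the suboptimality bound (testing the swapped configuration against the \emph{old} barycenters) gives
\[
|y_j-\xp(\xvec)|^p-|x_j-\xp(\xvec)|^p \;\ge\; |y_j-\xp(\yvec)|^p-|x_j-\xp(\yvec)|^p.
\]
But writing this as a double integral of $D^2h_p$ yields only
\[
\int_0^1\!\!\int_0^1 (y_j-x_j)^\intercal D^2h_p(\xi_{s,t})\,(\xp(\yvec)-\xp(\xvec))\,\dd s\,\dd t \;\ge\; 0,
\]
a bare monotonicity statement with right-hand side $0$, not a coercive one. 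For $p=2$ it collapses to $(y_j-x_j)\cdot(\xp(\yvec)-\xp(\xvec))\ge 0$. Take $N=2$, $d=1$, $\lambda_1=\lambda_2=\tfrac12$ and $\Gamma=\{(0,2),(2,0)\}$: both barycenters equal $1$, so your inequality is satisfied trivially for every $j$, yet $|x_1-y_1|=2$ while $\xp(\xvec)=\xp(\yvec)$. Thus the inequality you derived (which is strictly weaker than $c_p$-monotonicity) cannot produce $|\xp(\xvec)-\xp(\yvec)|\ge c\,\omega(|x_1-y_1|)$; uniform convexity of $|\cdot|^p$ does not help because there is nothing positive on the right to be uniformly convex about. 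By throwing away the true barycenters of the swapped points in favour of the suboptimal test points, you discarded exactly the second-order information that carries the coercivity.

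The paper obtains the missing coercive term from Lemma~\ref{lem:firstinequality} (proved in \cite{BFR24-h}): near a point $\xvec$ where $D^2h_p(x_{i_0}-\xp(\xvec))$ is nondegenerate, one has
\[
(y_{i_0}-\tilde y_{i_0})^\intercal D^2h_p(x_{i_0}-\xp(\xvec))\,(\xp(\yvec)-\xp(\ytildevec)) \;\ge\; \Lambda_{i_0}\,|y_{i_0}-\tilde y_{i_0}|^2 - \eps\,C\,|\yvec-\ytildevec|^2,
\]
with $\Lambda_{i_0}>0$ the least eigenvalue of $M_{i_0}H^{-1}M_{i_0}$. This requires linearising $\xp$ via the Implicit Function Theorem and using the \emph{actual} barycenter of the swapped configuration, not a suboptimal test point. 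Summing over the nondegenerate indices and applying Cauchy--Schwarz yields the local inverse Lipschitz bound. Because the IFT needs $D^2h_p\neq 0$, the support is partitioned into the sets $D_S=\{\xvec:\xp(\xvec)=x_i\ \text{iff}\ i\in S\}$; on $D_\emptyset$ one has full nondegeneracy, and on $D_S$ with $S\neq\emptyset$ the relation $\xp(\xvec)=x_i$ for $i\in S$ already gives $|\xp(\yvec)-\xp(\ytildevec)|=|y_i-\tilde y_i|$ for free (which is why, for $1<p<2$, the absolute continuity of \emph{all} marginals is needed, while for $p\ge 2$ a reduced-problem argument on $D_S$ lets one get by with a single absolutely continuous marginal). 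Your plan for the degenerate part is in the right spirit, but you first need to replace the non-coercive swap inequality by Lemma~\ref{lem:firstinequality} on the nondegenerate part.
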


We therefore prove absolute continuity of $p$-Wasserstein barycenters for $1<p<\infty$ under natural assumptions on the marginals. In a different direction, Kim and Pass \cite{KP17} proved the absolute continuity (with respect to volume measure) of Wasserstein barycenters of probability measures on compact Riemannian manifolds (with respect to the squared Riemannian distance as underlying cost function).

In our companion paper \cite{BFR24-h} we treated the case of barycenters with respect to strictly convex functions $h\in \mathcal{C}^2$ with non-degenerate Hessian. Unfortunately, this does not cover any of the $W_p$ distances except $p=2$. The main novelty in \cite{BFR24-h} was to combine the optimality of the multi-marginal coupling $\overline{\gamma}$ (in the weaker form of $c$-monotonicity of its support) with the continuity and non-degeneracy of $D^2 h$ to obtain  an injectivity estimate for the map $(\xdots) \mapsto \argmin_{z\in\RR^d} \sum_{i=1}^N \lambda_i h(x_i - z)$ on the support of $\overline{\gamma}$, which is a priori highly non-injective. This can be seen as a local Lipschitz estimate on the inverse of that map and allowed us to prove absolute continuity of the barycenter $\overline{\nu}$. 
Loosely speaking, the idea is somewhat analogous to the equality of the Hausdorff measure and the integralgeometric measure of a rectifiable set (generalized Crofton formula), see \cite[Theorem 3.3.13]{Fed96}.

Our proof in \cite{BFR24-h}, however, very much hinges on the continuity of $D^2 h$ at all points and on its non-degeneracy in a dense set of points\footnote{Indeed, even if not explicitly stated there, a careful inspection of the proof of Lemma~5.2 in \cite{BFR24-h} shows that what is really needed is that the union of any open neighbourhood of the points where $D^2h>0$ is an open covering of $\gammap$.} in the support of the optimal coupling $\gammap$. In the article at hand we overcome this difficulty by making use of the following observations: 
\begin{enumerate}[label=(\roman*)]
    \item\label{item:observation-singular} For $p>2$ the Hessian of $z\mapsto|z|^p$ degenerates only at $z=0$ and thus, given the $p$-barycenter $\xbar_p$ of the points $x_1, \dots, x_N$, the Hessian of $|\cdot|^p$ evaluated in $x_i-\xbar_p$ equals the zero matrix if and only if $\xbar_p(\xdots)=x_i$, with $i \in \{1, \dots, N\}$. In this case, we call $\xvec = (x_1, \dots, x_N)$ a \emph{singular point}. Similarly, for $1<p<2$, the Hessian of $|\cdot|^p$ is not defined at these singular points. 
    In both situations, however, we have additional information about the barycenter.
	\item The $c$-monotonicity estimate of \cite{BFR24-h} implies a local mononicity result around all regular points (i.e., the complement of the set of singular points defined in \ref{item:observation-singular}); this allows us to define a countable cover of all the regular points of the support of a $p$-optimal coupling where the inverse Lipschitz estimate of \cite{BFR24-h} holds.  
	\item In order to get absolute continuity of the barycenter we make the following \emph{important observation}: we do not need the full inverse Lipschitz estimate à la \cite{BFR24-h}, but have to estimate the injectivity of the barycenter only in terms of those coordinates for which the respective marginals are absolutely continuous. In this weaker version, such an estimate also holds on the set of singular points. We can then cover the full support of an optimal $\gammap$ with measurable sets on which an injectivity estimate holds.
 \begin{description}
	\item[$p>2$] In this case one needs only one marginal, say $\mu_1$, to be absolutely continuous.
        Then for all the points such that the $p$-barycenter is different from $x_1$, we rewrite our problem in a lower-dimensional setting, where the Hessian does not degenerate. This allows us to get the required inverse Lipschitz estimate.
        For the other points, where the $p$-barycenter is equal to $x_1$, this already entails a control on the distance of the barycenter.
	\item[$1<p<2$] By lack of regularity of $|\cdot|^p$ in this case, we cannot reduce the problem to a lower dimensional one with non-degenerate Hessian. This is the reason why we need all the marginals to be absolutely continuous in order to use the above observation.
 \end{description}
\end{enumerate}

One of the main advantages of our approach is that we study the properties of the support of an optimal plan as a \emph{geometric set} in a high-dimensional ambient space. This naturally brings in tools from geometric measure theory, which allows us to obtain the structure theorem (Theorem~\ref{th:sparsityplan}).
The result we thereby obtain is new and we hope opens the door to the further study of properties of Wasserstein barycenters. 
Finally we remark that in certain applications, it is natural to measure distances in the $1$-Wasserstein metric. While this limit case lies beyond the scope of the present article, we believe that our current work is an important step forward. 

\addtocontents{toc}{\setcounter{tocdepth}{-10}}
\subsection*{Outline of the article}
In Section~\ref{sec:preliminaries} we first recall the notion of $c$-monotonicity for MMOT. We then state and prove the equivalence between the multi-marginal and the coupled two-marginal formulation of the $p$-Wasserstein barycenter problem and show some properties the map $\xp$. Finally, we present the key lemma on $h$-Wasserstein barycenters from \cite{BFR24-h}, which is used in the proof of our main results. 
Section~\ref{sec:p-partA} is devoted to the proof of the Monge structure of the optimal plan, i.e. \ref{th:sparsity-A} of Theorem~\ref{th:sparsityplan}. This is done by proving the absolute continuity of the $p$-Wasserstein barycenter, first in a situation which covers the full range of $1<p<\infty$ (Theorem~\ref{th:absolutecontinuitypsmall}), but requires all the marginals to be absolutely continuous. For $p\ge 2$ we refine this to the case of only one absolutely continuous marginal (Theorem~\ref{th:absolutecontinuitypbig}). 
In Section~\ref{sec:p-partB} we prove \ref{th:sparsity-B} of Theorem~\ref{th:sparsityplan}. To this end, we show the existence and a.e. differentiability of  Kantorovich potentials for the dual problem of \eqref{MMpbary} (Theorem~\ref{th:duality}).
Finally, Section~\ref{sec:one-dim} discusses the statistical interpretation of $p$-Wasserstein barycenters in one dimension.

\addtocontents{toc}{\setcounter{tocdepth}{1}}

\section{Preliminaries}\label{sec:preliminaries}
We start by defining the natural generalization of the notion of $c$-monotonicity from $2$-marginal optimal transport to the multi-marginal setting. 
\begin{definition}\label{def: c-monotonicity}
Let $c:\RR^{Nd}\to\RR$. We say that a set $\Gamma\subset \RR^{Nd}$ is $c$-monotone if for every $\xvec^1=(x^1_1,\dots,x^1_N)$, $\xvec^2=(x^2_1,\dots,x^2_N)\in\Gamma$ we have that 
\begin{equation*}
    c(x^1_1,\dots,x^1_N)+c(x^2_1,\dots,x^2_N)\le c(x^{\sigma_1(1)}_1,\dots,x^{\sigma_N(1)}_N)+c(x^{\sigma_1(2)}_1,\dots,x^{\sigma_N(2)}_N),
\end{equation*}
where $\sigma_i\in S(2)$, with $S(2)$ the set of permutations of two elements. 
\end{definition}
We recall that also the notion of $c$-cyclical monotonicity can be defined in the multi-marginal setting (see, for instance, Definition 2.2 in \cite{KP14}) and that $c$-cyclical monotonicity implies $c$-monotonicity. If $c$ is continuous, then the support of any optimal plan $\gammabar$ for the multi-marginal OT problem associated to $c$ is $c$-cyclically monotone, see \cite[Proposition~2.3]{KP14}. Therefore we have

\begin{proposition}\label{prop:optimalplanarecmonotone}
If $c:\RR^{Nd}\to\RR$ is continuous and $\gammabar$ is optimal for
\begin{equation*}
\min_{\gamma\in\Pi(\mu_1,\dots,\mu_N)}\int_{\RR^{Nd}} c(\xdots)\,\dd\gamma,
\end{equation*} then $\spt\gammabar$ is $c$-monotone.
\end{proposition}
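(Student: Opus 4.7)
The plan is to deduce the proposition directly from the $c$-cyclical monotonicity result quoted immediately before the statement, namely \cite[Proposition~2.3]{KP14}. Since $c$ is continuous, that result applied to $\gammabar$ gives that $\spt\gammabar$ is $c$-cyclically monotone in the multi-marginal sense of \cite[Definition~2.2]{KP14}. The inequality in Definition~\ref{def: c-monotonicity} is precisely the restriction of multi-marginal $c$-cyclical monotonicity to cycles of length two, with the flexibility of choosing the permutations $\sigma_i\in S(2)$ independently in each coordinate slot. Hence the claim follows by specialization, and no further work is required beyond quoting \cite{KP14}.

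If instead one wished to give a self-contained argument bypassing \cite{KP14}, I would run a standard swap argument. First I would suppose, for contradiction, that there exist $\xvec^1,\xvec^2\in\spt\gammabar$ and permutations $\sigma_1,\dots,\sigma_N\in S(2)$ violating the inequality of Definition~\ref{def: c-monotonicity}. Continuity of $c$ then upgrades the strict inequality to a uniform gap $\delta>0$ on sufficiently small open neighborhoods $U_1\ni\xvec^1$ and $U_2\ni\xvec^2$, and by the definition of support $\gammabar(U_j)>0$ for $j=1,2$. I would then build a competitor $\tilde\gamma\in\Prob(\RR^{Nd})$ by removing a small amount of mass from $\gammabar|_{U_1\cup U_2}$ and reinserting it as the push-forward of a suitable (normalized) product measure on $U_1\times U_2$ under the coordinate-wise swap induced by the $\sigma_i$.

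The main obstacle is the measure-theoretic bookkeeping needed to ensure that $\tilde\gamma\in\Pi(\mu_1,\dots,\mu_N)$. The key observation is that the swap acts independently in each of the $N$ coordinate slots, so on the product space $\RR^{Nd}\times\RR^{Nd}$ the push-forward under $\pi^i$ in each slot is unchanged; hence the mass added back matches exactly the mass subtracted from $\gammabar$ in every marginal, and all $N$ marginal constraints are preserved. Once this is verified, the integral of $c$ against $\tilde\gamma$ is strictly smaller than against $\gammabar$ by at least $\delta$ times the exchanged mass, contradicting optimality of $\gammabar$ and proving the proposition.
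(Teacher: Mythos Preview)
Your proposal is correct and matches the paper's approach exactly: the paper does not give a separate proof but simply invokes \cite[Proposition~2.3]{KP14} for $c$-cyclical monotonicity of $\spt\gammabar$ and then observes (in the text immediately preceding the proposition) that $c$-cyclical monotonicity implies $c$-monotonicity, which is precisely your first paragraph. Your sketch of the self-contained swap argument is additional and not required, but it is a sound outline of the standard proof.
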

\subsection{Equivalence of multi-marginal and coupled two-marginal formulation}
Even though Proposition~\ref{prop:equivalence} and Corollary~\ref{cor:optimalitytwomarginals} were already proved in \cite{CE10} and in \cite{BFR24-h}, we give the proof below, because we think that the simple argument provides a better insight into the relation between the multi-marginal and coupled two-marginal formulations.

\begin{proof}
 We first show that $C_{p\text{-MM}}\ge C_{p\text{-C2M}}$. Let $\gammap$ be optimal for \eqref{MMpbary} and let $\nup=(\xp)_\sharp\gammap$. For any $i\in\{1, \dots, N\}$ we define $\gamma_i:=(\pi_i,\xp)_\sharp\gammap$, where $\pi_i(\xdots)=x_i$. Then $\gamma_i\in\Pi(\mu_i,\nup)$ and
 \begin{align}
    \notag \int_{\RR^{Nd}}\lambda_i|x_i-\xp(\mathbf{x})|^p \dd\gammap(\xvec)
    &=\int_{\RR^{Nd}}\lambda_i |\pi_i(\mathbf{x})-\xp(\mathbf{x})|^p \dd\gammap(\xvec)
    =\int_{\RR^{2d}}\lambda_i|x_i-z|^p\dd\gamma_i(x_i,z)\\
     \label{eq:MMgraterC2M}&\ge \lambda_iW_p(\nup,\mu_i).
 \end{align}
 By summing over $i$ we obtain the desired inequality.

 \smallskip
 For the converse inequality $C_{p\text{-MM}}\le C_{p\text{-C2M}}$, let $\rho\in\Prob(\RRd)$ and $\gamma_i$ be optimal for $\lambda_iW_p(\rho,\mu_i)$. Denote by $\{\gamma_i^{(z)}\}_{z\in\RRd}$ the disintegration of $\gamma_i$ with respect to the first marginal, i.e.\ $\gamma_i^{(z)}\in\Prob(\RRd)$ for every $z\in\RRd$ and $\dd\gamma_i(x_i,z)=\dd\gamma_i^{(z)}(x_i)\dd\rho(z)$.
If $\gamma$ is such that  
\begin{equation}\label{eq:gammadisint}
    \dd\gamma(\xdots)=\int_{\RRd}\dd\gamma_1^{(z)}(x_1)\cdots \dd\gamma_N^{(z)}(x_N)\dd\rho(z),
\end{equation}
then
\begin{align}
\notag\sum_{i=1}^N \lambda_i W_p(\mu_i,\rho)&=\sum_{i=1}^N\lambda_i\int_{\RR^{2d}}|x_i-z|^p \dd\gamma_i
=\sum_{i=1}^N\lambda_i\int_{\RR^{2d}}|x_i-z|^p\dd\gamma_i^{(z)}(x_i)\dd\rho(z)\\\notag&=\int_{\RR^{(N+1)d}}\sum_{i=1}^N\lambda_i|x_i-z|^p\dd\gamma_1^{(z)}(x_1)\cdots \dd\gamma_N^{(z)}(x_N)\dd\rho(z)\\
\label{eq:minimalitybary}&\ge \int_{\RR^{Nd}}\sum_{i=1}^N\lambda_i|x_i-\xp(\mathbf{x})|^p \int_{\RRd}\dd\gamma_1^{(z)}(x_1)\cdots \dd\gamma_N^{(z)}(x_N)\dd\rho(z)\\
\label{eq:last}&\ge C_{p\text{-MM}}.
\end{align}
The claim follows by taking the infimum over all $\rho$.

\smallskip
We conclude by proving the equivalence of the minimizers. Let $\gammap$ be optimal for \eqref{MMpbary}. Then by equality in \eqref{eq:MMgraterC2M}, $\nup=(\xp)_\sharp \gamma$ has to be optimal for \eqref{eq:p-barycenter}. \\
Vice versa, let $\nup$ be optimal for the problem \eqref{C2Mpbary} and define $\hat\gamma\in\Pi(\mu_1,\dots,\mu_N,\nup)$ via
\begin{equation*}
\dd\hat\gamma(\xdots,z):=\dd\gamma_1^{(z)}(x_i)\cdots \dd\gamma_N^{(z)}(x_N)\dd\nup(z).
\end{equation*}
Clearly, by \eqref{eq:gammadisint}, we have that $\gamma=(\pi_1,\dots, \pi_N)_{\sharp}\hat\gamma$. Since the inequalities \eqref{eq:minimalitybary} and \eqref{eq:last} are in fact equalities, $\gamma$ has to be optimal. 
But inequality \eqref{eq:minimalitybary} is an equality if and only if $z=\xp(\mathbf{x})$ on $\spt\hat\gamma$, so that we also have that $\hat\gamma=(\xp,\id)_\sharp\gamma$ and thus $\nup=(\pi_0)_\sharp\hat\gamma=(\xp)_\sharp\gamma$.
\end{proof}
\begin{corollary}
\label{cor:optimalitytwomarginals}
Let $\gammap$ be optimal for the problem \eqref{MMpbary}. Then  $\gamma_i:=(\pi_i,\xh)_\sharp\gammap\in \Pi(\mu_i,\nup)$ is optimal for $W_p(\mu_i,\nup)$, where $\nup:=(\xp)_\sharp\gammap$ is the $p$-Wasserstein barycenter.
\end{corollary}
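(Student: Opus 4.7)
The plan is to derive the corollary as an immediate consequence of the chain of (in)equalities already established in the proof of Proposition~\ref{prop:equivalence}, combined with the equality $C_{p\text{-MM}} = C_{p\text{-C2M}}$ proved there. The guiding observation is that summing a collection of inequalities $A_i \geq B_i$ gives an equality $\sum A_i = \sum B_i$ only if each individual $A_i = B_i$; here the $A_i$ come from the multi-marginal cost, the $B_i$ from the two-marginal optimal costs, and the overall equality is supplied by Proposition~\ref{prop:equivalence}.

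First I would verify that $\gamma_i = (\pi_i, \xp)_\sharp \gammap$ does lie in $\Pi(\mu_i, \nup)$: its first marginal is $(\pi_i)_\sharp \gammap = \mu_i$ since $\gammap \in \Pi(\mu_1, \dots, \mu_N)$, and its second marginal is $(\xp)_\sharp \gammap = \nup$ by definition of $\nup$. Next, by the change-of-variables formula for push-forwards,
\begin{equation*}
    \int_{\RR^{2d}} |x_i - z|^p \, \dd\gamma_i(x_i, z) = \int_{\RR^{Nd}} |x_i - \xp(\xvec)|^p \, \dd\gammap(\xvec).
\end{equation*}
Since $\gamma_i \in \Pi(\mu_i, \nup)$, the left-hand side is bounded below by $W_p^p(\mu_i, \nup)$, so multiplying by $\lambda_i$ and summing gives
\begin{equation*}
    C_{p\text{-MM}} = \int_{\RR^{Nd}} c_p(\xvec) \, \dd\gammap(\xvec) = \sum_{i=1}^N \lambda_i \int_{\RR^{Nd}} |x_i - \xp(\xvec)|^p \, \dd\gammap(\xvec) \geq \sum_{i=1}^N \lambda_i W_p^p(\mu_i, \nup) \geq C_{p\text{-C2M}},
\end{equation*}
where the last inequality uses that $\nup$ is an admissible competitor for \eqref{C2Mpbary}.

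By Proposition~\ref{prop:equivalence}, $C_{p\text{-MM}} = C_{p\text{-C2M}}$, so every inequality above is in fact an equality. In particular, for each $i \in \{1, \dots, N\}$ the term $\lambda_i \int |x_i - z|^p \, \dd\gamma_i \geq \lambda_i W_p^p(\mu_i, \nup)$ must hold with equality (any strict inequality at a single index would make the sum strict, since $\lambda_i > 0$ and the other $N-1$ terms can only fail to be strict in the other direction). Hence $\gamma_i$ is optimal for $W_p^p(\mu_i, \nup)$, which is the conclusion. There is no real obstacle here; the only thing to be careful about is the bookkeeping step that extracts individual equalities from the summed one, which relies on the positivity of each $\lambda_i$ and on each summand being independently bounded below by a nonnegative quantity.
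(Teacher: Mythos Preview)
Your proof is correct and follows essentially the same approach as the paper: verify $\gamma_i\in\Pi(\mu_i,\nup)$, rewrite each two-marginal cost as an integral against $\gammap$ via push-forward, sum to match the multi-marginal cost, and use the equality $C_{p\text{-MM}}=C_{p\text{-C2M}}$ from Proposition~\ref{prop:equivalence} to force each individual inequality to be an equality. The only cosmetic difference is that the paper invokes the optimality of $\nup$ for \eqref{C2Mpbary} (which is part of Proposition~\ref{prop:equivalence}) rather than explicitly passing through $C_{p\text{-C2M}}$, but the content is identical.
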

\begin{proof}
The fact that $\gamma_i:=(\pi_i,\xp)_\sharp\gammap\in \Pi(\mu_i,\nup)$ is straightforward from the definition. Moreover, by Proposition~\ref{prop:equivalence},
\begin{align*}
\sum_{i=1}^N \lambda_i W_p(\mu_i,\nup)&\le\sum_{i=1}^N\int_{\RR^{2d}}\lambda_i|x_i-z|^p\dd\gamma_i(x_i,z)
=\sum_{i=1}^N\int_{\RR^{Nd}}\lambda_i|\pi_i(\xvec)-\xp(\xvec)|^p \dd\gammap(\xvec)\\
&=\sum_{i=1}^N \lambda_i W_p(\mu_i,\nup).
\end{align*}
Since $\lambda_i W_p(\mu_i,\nup)\le \lambda_i\int_{\RR^{2d}}|x_i-z|^p \dd\gamma_i$ for every $i\in\{1, \dots, N\}$, the equality above yields $\lambda_i W_p(\mu_i,\nup)= \lambda_i\int_{\RR^{2d}}|x_i-z|^p \dd\gamma_i$, and thus the optimality of $\gamma_i$.
\end{proof}

\subsection{Some properties of the barycenter map $\xp$}

\begin{proposition}
\label{prop: bary in convex hull }
Let $1<p<\infty$. Then the $p$-barycenter $\xp(x_1,\dots, x_N)$ of the points $x_1, \dots, x_N \in \RR^{d}$ is contained in the convex hull of $\{x_1, \dots, x_N\}$. More precisely, we have that
\begin{align}\label{eq:convex-combination}
    \xp(x_1, \dots, x_N) = \sum_{i=1}^N \eta_i(\xvec) x_i
\end{align}
with $\eta_i(\xvec)\coloneq\frac{\lambda_i p|x_i-\xp(x)|^{p-2}}{\sum_{k=1}^{N}\lambda_kp|x_k-\xp(x)|^{p-2}}$.
\end{proposition}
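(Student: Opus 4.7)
The plan is to derive the formula directly from the first-order optimality condition for the strictly convex minimization problem defining $\xp$, and then read off the convex-hull inclusion as a corollary. First I would recall, as in the footnote after \eqref{eq:p-barycenter}, that for $p>1$ the function $F\colon z \mapsto \sum_{i=1}^{N}\lambda_i|x_i-z|^p$ is strictly convex and coercive on $\RR^d$, hence admits a unique minimizer, namely $\xp(\xvec)$. Moreover, since $z\mapsto|z|^p$ is of class $C^1$ on $\RR^d$ for every $p>1$, with gradient $p|z|^{p-2}z$ (which tends to $0$ as $z\to 0$ and is therefore continuously extended by $0$ at the origin), $F$ itself is $C^1$.

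Next I would write the Euler--Lagrange equation $\nabla F(\xp)=0$, i.e.
\begin{equation*}
    \sum_{i=1}^{N}\lambda_i\,p\,|x_i-\xp(\xvec)|^{p-2}(x_i-\xp(\xvec))=0,
\end{equation*}
and rearrange it as
\begin{equation*}
    \Bigl(\sum_{k=1}^{N}\lambda_k\,p\,|x_k-\xp(\xvec)|^{p-2}\Bigr)\,\xp(\xvec)
    = \sum_{i=1}^{N}\lambda_i\,p\,|x_i-\xp(\xvec)|^{p-2}\,x_i.
\end{equation*}
Provided the denominator $D(\xvec):=\sum_{k}\lambda_k p|x_k-\xp(\xvec)|^{p-2}$ lies in $(0,\infty)$ -- which is precisely the generic (non-singular) case, i.e.\ $\xp(\xvec)\neq x_i$ for every $i$ -- division by $D(\xvec)$ yields the claimed representation \eqref{eq:convex-combination} with the stated weights $\eta_i(\xvec)$. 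These weights are manifestly non-negative and sum to one, so $\xp(\xvec)$ is a convex combination of $\{\xdots\}$, hence contained in their convex hull.

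Finally I would address the degenerate case in which $\xp(\xvec)$ coincides with one of the $x_j$'s, which is exactly the notion of singular point flagged in observation \ref{item:observation-singular} of the introduction. Here the convex-hull inclusion is trivial since $\xp(\xvec)=x_j$, and the formula \eqref{eq:convex-combination} is interpreted appropriately: for $p\ge 2$ one has $|x_j-\xp|^{p-2}=0$, so $\eta_j=0$ and the identity persists; for $1<p<2$ the $j$-th term dominates both numerator and denominator (both being infinite), and the natural limiting reading is $\eta_j=1$, $\eta_i=0$ for $i\neq j$. I do not expect any real obstacle in this proof; the only point that requires care is this book-keeping at singular configurations, where the pointwise formula has to be interpreted as the natural limit. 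As a conceptually cleaner (and formula-free) justification of the convex-hull inclusion valid in all cases, one may also note that the Euclidean projection $y$ of any $z\notin\mathrm{conv}\{\xdots\}$ onto the convex hull satisfies $|x_i-y|<|x_i-z|$ for every $i$, hence $F(y)<F(z)$, so no such $z$ can be a minimizer of $F$.
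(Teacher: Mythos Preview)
Your proof is correct and follows exactly the same approach as the paper: write the first-order optimality condition \eqref{eq:euler-lag bary} and rearrange to obtain the convex combination. You add a careful discussion of the singular configurations $\xp(\xvec)=x_j$ (and an alternative projection argument for the convex-hull inclusion) that the paper's two-line proof simply omits, but this is extra detail rather than a different method.
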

\begin{proof}
Fix $\xvec=(\xdots)$, then by optimality, $\xp(\xvec)$ is the only solution of
\begin{equation}\label{eq:euler-lag bary}
   \sum_{i=1}^{N}\lambda_i p|x_i-z|^{p-2}(x_i-z)=0.
\end{equation}
Rearranging the equation yields the conclusion. 
 \end{proof}
\begin{corollary}\label{cor: continuity bary} (Continuity of the barycenter map)
    Let $1<p<\infty$. Then the functions $\xp:\RR^{Nd}\to\RRd$ and $c_p:\RR^{Nd}\to\RRd$ are continuous.
\end{corollary}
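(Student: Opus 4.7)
The plan is to prove continuity of $\xbar_p$ by the standard argmin-continuity argument, and then deduce continuity of $c_p$ as a composition of continuous functions. (Note that $c_p$ maps to $\RR$, not $\RR^d$; I read the statement as a typo.)

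First I would fix $\xvec\in\RR^{Nd}$ and a sequence $\xvec^n\to\xvec$, and set $z_n\coloneq\xbar_p(\xvec^n)$. The key preliminary step is to show that $(z_n)_n$ is bounded. This follows immediately from Proposition~\ref{prop: bary in convex hull }: each $z_n$ lies in the convex hull of $\{x_1^n,\dots,x_N^n\}$, and since $\xvec^n\to\xvec$ the sets $\{x_1^n,\dots,x_N^n\}$ are uniformly bounded in $\RR^d$. Therefore, along any subsequence (not relabeled) one has $z_n\to z^\ast$ for some $z^\ast\in\RR^d$.

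Next I would pass to the limit in the optimality inequality. For every $z\in\RR^d$ and every $n$,
\begin{equation*}
\sum_{i=1}^N\lambda_i|x_i^n-z_n|^p \;\le\; \sum_{i=1}^N\lambda_i|x_i^n-z|^p,
\end{equation*}
and using continuity of $w\mapsto|w|^p$ together with $x_i^n\to x_i$ and $z_n\to z^\ast$ we obtain
\begin{equation*}
\sum_{i=1}^N\lambda_i|x_i-z^\ast|^p \;\le\; \sum_{i=1}^N\lambda_i|x_i-z|^p \qquad\text{for all } z\in\RR^d.
\end{equation*}
Hence $z^\ast$ is a minimizer of $z\mapsto\sum_i\lambda_i|x_i-z|^p$. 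By strict convexity and coercivity of this map (as recorded in the footnote after \eqref{eq:p-barycenter}), the minimizer is unique, so $z^\ast=\xbar_p(\xvec)$. Since every convergent subsequence of the bounded sequence $(z_n)$ has the same limit $\xbar_p(\xvec)$, the full sequence converges, giving $\xbar_p(\xvec^n)\to\xbar_p(\xvec)$. This establishes continuity of $\xbar_p$.

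Finally, continuity of $c_p$ follows from its explicit form in \eqref{eq:p-cost}: $c_p(\xvec)=\sum_{i=1}^N\lambda_i|x_i-\xbar_p(\xvec)|^p$ is a composition of the continuous map $\xbar_p$ with continuous functions. I do not expect a genuine obstacle here; the only point that requires a little care is the boundedness of $(z_n)$, which is precisely what Proposition~\ref{prop: bary in convex hull } provides and the reason it is stated just before this corollary.
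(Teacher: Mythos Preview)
Your proof is correct and follows essentially the same approach as the paper: both use Proposition~\ref{prop: bary in convex hull } to confine the minimization to a compact set and then invoke the standard continuity of the argmin of a jointly continuous, strictly convex objective. The paper states this principle tersely (``it locally is the minimum of a family of continuous functions $\{f_z\}_{z\in K}$ over a compact set $K$''), while you spell out the sequential argument explicitly; the mathematical content is the same.
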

\begin{proof}
    The continuity of $(\xdots)\mapsto\xp(\xdots)$ follows from the fact that it locally is the minimum of a family of continuous functions $\{f_z\}_{z\in K}$ over a compact set $K$, where\footnote{If $B_1,\dots, B_N\subset\RRd$ are open balls, then for any $(\ydots)\in B_1\times\cdots\times B_N$, \[\xp(\ydots)=\min_{z\in K} f_z(\ydots),\]
    where $K$ is the closure of the convex hull of $\bigcup_{i=1}^N B_i$. Indeed, $K$ contains the union of the convex hulls of $\{\ydots\}$ with $(\ydots)\in B_1\times\cdots\times B_N$.} \[ f_z(\xdots)\coloneq \sum_{i=1}^N\lambda_i|x_i-z|^p. \]
    As a composition of continuous functions, also $c_p$ is continuous.
\end{proof}
\begin{corollary} (Asymptotic behavior as $p\to 1$ and $p\to\infty$)
 For any fixed $\xvec=(\xdots)\in\RR^{Nd}$, limit points of \, $\xp(\xvec)$ as $p\to 1$ or $p\to \infty$ exist. Moreover, every limit point is a solution  of 
 \begin{align*}
     \min_{z\in\RRd}\sum_{i=1}^N\lambda_i|x_i-z| \quad \text{for } p\to1, \quad \text{and} \quad \min_{z\in \RRd}\max_{i=1,\dots,N}|x_i-z| \quad \text{for }p\to\infty.
 \end{align*}
\end{corollary}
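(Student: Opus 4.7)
The plan is a straightforward $\Gamma$-convergence argument. First I would establish compactness: by Proposition~\ref{prop: bary in convex hull }, $\overline{x}_p(\xvec)$ lies in the compact convex hull $K = \mathrm{conv}\{\xdots\}$ for every $1<p<\infty$, so any sequence $p_n \to 1^+$ or $p_n \to \infty$ admits a convergent subsequence of $\overline{x}_{p_n}(\xvec)$, proving existence of limit points.

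Next I would show uniform convergence of the appropriate objectives on $K$. For $p\to\infty$ the raw objective $G_p(z):=\sum_i \lambda_i |x_i-z|^p$ either blows up (where $\max_i |x_i-z|>1$) or vanishes (where $\max_i |x_i-z|<1$), so one must pass to the $p$-th root $F_p(z):=G_p(z)^{1/p}$, which has the same minimizer $\overline{x}_p(\xvec)$ by monotonicity of $t\mapsto t^{1/p}$. The elementary two-sided bound
\[
\lambda_{\min}^{1/p}\,F_\infty(z) \le F_p(z) \le F_\infty(z), \qquad F_\infty(z):=\max_{i=1,\dots,N} |x_i-z|,
\]
with $\lambda_{\min}:=\min_i \lambda_i > 0$ and using $\sum_i \lambda_i = 1$, yields $F_p \to F_\infty$ uniformly on $\RRd$. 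For $p\to 1^+$ I would work with $G_p$ directly: joint continuity of $(p,z)\mapsto G_p(z)$ on the compact rectangle $[1,2]\times K$ gives uniform continuity, hence uniform convergence $G_p \to G_1 := \sum_i \lambda_i |x_i-\cdot|$ on $K$.

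Finally I would pass to the limit in the minimality inequality. For $p_n \to \infty$ with $\overline{x}_{p_n}(\xvec) \to z^* \in K$ and $z^{**}\in\RRd$ any minimizer of $F_\infty$, uniform convergence of $F_{p_n}$ on $K\cup\{z^{**}\}$ combined with $F_{p_n}(\overline{x}_{p_n}(\xvec)) \le F_{p_n}(z^{**})$ gives
\[
F_\infty(z^*) = \lim_n F_{p_n}(\overline{x}_{p_n}(\xvec)) \le \lim_n F_{p_n}(z^{**}) = F_\infty(z^{**}),
\]
so $z^*$ is a minimizer. The case $p\to 1^+$ is identical with $F_p, F_\infty$ replaced by $G_p, G_1$.

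No step is genuinely difficult; the only subtlety worth flagging is the normalization choice for $p\to\infty$, where passing to $p$-th roots is essential because the raw sum $G_p$ has no meaningful pointwise limit on $\RRd$.
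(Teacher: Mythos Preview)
Your argument is correct and matches the paper's approach: both use Proposition~\ref{prop: bary in convex hull } for compactness and then a convergence argument for the objective functionals (the paper phrases it as $\Gamma$-convergence of $F_p = G_p^{1/p}$ for both limits, whereas you prove the stronger uniform convergence and split into $G_p$ for $p\to 1$ and $F_p$ for $p\to\infty$). One small slip: your two-sided bound gives $|F_p - F_\infty| \le (1-\lambda_{\min}^{1/p})F_\infty$, which is only \emph{locally} uniform since $F_\infty$ is unbounded on $\RR^d$, but this is harmless because you correctly apply it only on the compact set $K\cup\{z^{**}\}$.
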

\begin{proof}
Given $\xvec\in\RR^{Nd}$, Proposition \ref{prop: bary in convex hull } ensures that $\{\xp(\xvec)\}_{1<p<\infty}$ is contained in the convex hull of $\{\xdots\}$, yielding the existence of limit points as $p\to 1$ or as $p\to\infty$. It is easy to verify that the functions 
\[F_p(z)\coloneq\left(\sum_{i=1,\dots,N}\lambda_i|x_i-z|^p\right)^{\frac 1p}\]
converge (in the sense of $\Gamma$-convergence) to
\[F_1 (z)\coloneq\sum_{i=1}^N\lambda_i|x_i-z|, \quad \text{as }p\to1,\]
and to
\[F_\infty (z):=\max_{i=1,\dots,N}|x_i-z|, \quad \text{as }p\to\infty.       \]
We conclude by observing that $\xp(\xvec)$ minimizes $F_p$ for any $1<p<\infty$. 
\end{proof}
\begin{remark}
For $d=1$ and $\lambda_1=\cdots = \lambda_N=\frac 1N$, one can prove that the functionals $F_1$ (for $N$ odd) and $F_\infty$ (for any $N$) admit a unique minimizer. It follows that the full sequence $\{\xp(\xvec)\}$ converges to $\xbar_1(\xvec)$ as $p\to 1$, and to $\xbar_\infty(\xvec)$ as $p\to\infty$, respectively. Moreover $\xbar_1(\xvec)$ is  the only (for $N$ odd) solution of the equation
\begin{equation}\label{eq:euler-lag 1}
   \sum_{i=1}^N\sgn(x_i-z)=0,
\end{equation}
and $\xbar_\infty(\xvec)$ is the only solution of 
\begin{equation}\label{eq:euler-lag infty}
    (\min_i x_i-z)+(\max_i x_i-z)=0.
\end{equation}
Hence, equations \eqref{eq:euler-lag 1} and \eqref{eq:euler-lag infty}, which  can be formally obtained by passing to the limit $p\to1 $ and $p\to\infty$ in \eqref{eq:euler-lag bary}, provide a generalization of the Euler-Lagrange equation \eqref{eq:euler-lag bary} of $F_p$ to the non smooth cases $p=1$ and $p=\infty$ .
\end{remark}
\subsection{Preliminary results for general strictly convex functions $h$}\label{subsec:preliminary results h}
Let us now consider a strictly convex function $h\in C^1(\RRd)$ with $h\ge 0$, and such that $\lim_{|z|\to+ \infty}h(z)=+\infty$. 
Given $\xvec=(\xdots)\in \RR^{Nd}$, we define \footnote{Note that $\xh$ is well-defined thanks to the strict convexity and coercivity of $h$.}
\begin{align*}\label{eq:h-barycenter}
    \xh(x_1, \dots, x_N) \coloneq \argmin_{z\in\RR^d} \sum_{i=1}^N \lambda_i h(x-z),
\end{align*}
and 
\begin{equation*}\label{eq:ch}
c_h(\xdots)\coloneq \min_{z\in\RRd}\sum_{i=1}^{N}\lambda_i h(x_i-z)=\sum_{i=1}^{N}\lambda_i h(x_i-\overline{x}_h(\mathbf{x})).
\end{equation*}
The multi-marginal optimal transport problem associated to $c_h$ is given by
\begin{equation}\label{eq:MMhbary}\tag{MM-$h$-bar}
	C_{h{\text{-MM}}} \coloneq \min_{\gamma\in\Pi(\mu_1,\dots,\mu_N)}\int_{\RR^{Nd}} c_h(\xdots)\,\dd\gamma.
\end{equation}

\begin{remark}\label{rem:hp is h}
    Notice that the function $h_p\coloneq|\cdot|^p$ satisfies the above assumptions for any $p>1$. From \eqref{eq:p-cost}, \eqref{eq:p-barycenter}, and \eqref{MMpbary}, we have that  $c_p=c_{h_p}$, $\xp = \overline{x}_{h_p}$, and $C_{p\text{-MM}} = C_{h_p\text{-MM}}$.
\end{remark}

As pointed out in the introduction, in the next section we will exploit a result of \cite{BFR24-h}, which we recall here. 
For brevity, we define, where they exist, 
\begin{align*}
     H(\xvec)\coloneq \sum_{k=1}^N \lambda_kD^2h(x_k-\xh(\xvec)), \quad \text{and} \quad
     M_i(\xvec)\coloneq D^2h(x_i-\xh(\xvec)), \quad \text{for } i\in\{1,\dots,N\}.
\end{align*}
\begin{lemma}\label{lem:firstinequality}
Assume that $\spt\overline\gamma$ is $c_h$-monotone (see Defintion \ref{def: c-monotonicity}) and that there exists a point $\xvec=(\xdots)\in\RR^{Nd}$ with the property that $h\in \mathcal{C}^2\left( \bigcap_{i=1}^N \pi^i\left(B(\xvec,\rho)\right) - B(\xh(\xvec),\delta) \right)$\footnote{here $-$ is the Minkowski difference, i.e. $A-B:=\{a-b \, : \, a\in A, \, b\in B\}$. } for some  $\rho,\delta>0$, and that $\det D^2h(x_{i_0}-\xh(\xvec))\neq 0$ for some $i_0\in\{1, \dots, N\}$.
Then for every $\eps>0$ there exists $r>0$ such that for every $\yvec,\ytildevec\in\spt\overline\gamma\cap B(\xvec,r)$,
\begin{equation}\label{eq:firstinequality}
    (y_{i_0}-\tilde{y}_{i_0})^T\left(D^2h(x_{i_0}-\xh(\xvec))\right) \left(\xh(\yvec)-\xh(\ytildevec)\right)\ge \Lambda_{i_0} |y_{i_0}-\tilde y_{i_0}|^2-\eps N(1+|M_{i_0}(\xvec)|)|\yvec-\ytildevec|^2,
\end{equation}
and $\Lambda_{i_0}>0$ is the smallest eigenvalue of the matrix  $D^2h(x_{i_0}-\xh(\xvec))H(\xvec)^{-1}D^2h(x_{i_0}-\xh(\xvec))$. 
\end{lemma}

\begin{proof} Lemma~\ref{lem:firstinequality}  is a minor improvement of \cite[Lemma~5.2]{BFR24-h} and most of the proof carries over.
In that paper the assertion was proved for $h\in \mathcal{C}^2(\RRd)$. However, a closer inspection of the proof shows the local regularity assumption $h\in \mathcal{C}^2\left( \bigcap_{i=1}^N \pi^i\left(B(\xvec,\rho)\right) -B(\xh(\xvec),\delta) \right)$ on $h$ is sufficient. 
  Indeed, the main ingredients of that proof are 
  \begin{enumerate}
      \item the $c_h$-monotonicity of  $\gammabar$ and a characterization of $c_h$ monotonicity which involves the mixed second derivatives of the cost function $c_h$;
      \item \label{a2 : strictly positive} the fact that $D^2h$ is a positive definite matrix and that $D^2h(x_{i_0}-\xh(\xvec))$ is strictly positive definite for some $i_0$;
      \item \label{a3 : localregularity} the $\mathcal{C}^1$ regularity of $\xh$ and the $\mathcal{C}^2$ regularity of $c_h$ around $\xvec$. 
  \end{enumerate}
Note that \eqref{a2 : strictly positive} is directly implied by combining the assumption of the lemma with the convexity of $h$. 
It remains to show that local regularity of $h$ and nondegenerancy of $D^2h$ at one point imply \eqref{a3 : localregularity}. This is done in the following lemma.
\end{proof}

\begin{lemma}\label{lem:diffbary}
Assume that there exists $\xvec=(\xdots)\in\RR^{Nd}$ with the property that $h\in \mathcal{C}^2\left( \bigcap_{i=1}^N \pi^i\left(B(\xvec,\rho)\right) -B(\xh(\xvec),\delta) \right)$ for some $\rho,\delta>0$, and that $\det D^2h(x_{i_0}-\xh(\xvec))\neq 0$ for some $i_0\in\{1, \dots, N\}$. Then there exists an open neighborhood $U_{\xvec}$ of $\xvec$ such that  $\xh\in \mathcal{C}^1(U_{\xvec})$ and $c_h\in\mathcal{C}^2(U_{\xvec})$.
\end{lemma}
\begin{proof}
  Due to its optimality, for any $\xvec=(\xdots)\in\RR^{Nd}$, the point $\xh(\xdots)$ is the only solution of 
 \begin{equation}\label{eq:firstorderoptimalitybary}
     \sum_{i=1}^{N}\lambda_i Dh(x_i-z)=0.
 \end{equation} 
We point out that the function $ \sum_{i=1}^{N}\lambda_i Dh(x_i-z)$ is continuously differentiable in the open set where $h$ admits a continuous second derivative and that there, since $h$ is convex, $D^2h\ge 0$ and $$\det(D^2h(x_i-\xh(\xvec)))\neq0 \implies D^2h(x_i-\xh(\xvec))>0.$$
Then $D_z\left(\sum_{i=1}^{N}\lambda_i Dh(x_i-\xh(\xvec)\right)$ is invertible, and by the Implicit Function Theorem there exists an open neighborhood $U_{\xvec}$ of $\xvec$ such that $z=\xh(\yvec)$ satisfies \eqref{eq:firstorderoptimalitybary} for every $\yvec=(\ydots)\in U_{\xvec}$ and $\xh\in \mathcal{C}^1(U_{\xvec})$. Moreover one can easily check that for every $\yvec\in U_{\xvec}$ 
$$D_{x_i}c_h(\yvec)=Dh(y_i-\xh(\yvec))$$ and thus $c_h\in\mathcal{C}^2(U_{\xvec})$.
\end{proof}

\section{Sparsity of the optimal plan}\label{sec:p-partA}
In this section, we give the proof of \ref{th:sparsity-A} of Theorem \ref{th:sparsityplan}. From now on, for any $1<p<\infty$, let $h_p:=|\cdot|^p$. As pointed out in Remark \ref{rem:hp is h}, $h_p$ satisfies all the assumptions of Section~\ref{subsec:preliminary results h}.
 \begin{remark}\label{rmk:singularities}
 For any $z\in\RR^d$, one has
 \begin{equation*}
    D\hp(z)=p|z|^{p-2}z,
 \end{equation*}
 and, whenever $D^2h_p$ exists at a point $z\in\RR^d$, there holds
 \begin{equation*}
  D^2\hp(z)=p |z|^{p-2}\left((p-2) \frac{z}{|z|}\otimes \frac{z}{|z|}+ \1 \right).
 \end{equation*}
 In particular:
 \begin{enumerate}[label=$\circ$]
     \item if $1<p<2$, $D\hp$ is not differentiable at $0$, but  $h\in \mathcal{C}^2(\RR^d\setminus \{0\})$;
     \item if $p\ge 2$, $h\in\mathcal{C}^2(\RRd)$;
     \item if $p=2$, $D^2 \hp= \mathrm{Id}$, and thus $D^2\hp>0$ everywhere;
    \item if $1<p<2$ or $p>2$, $D^2\hp>0$ on $\RR^d\setminus \{0\}$.
 \end{enumerate}
Therefore, given $\mathbf{x}:=(\xdots)\in\RR^{Nd}$ and $\xvec\mapsto \xp(\xvec)$ defined as in \eqref{eq:p-barycenter},
 for any $i=1,\dots,N$, the following holds: 
 \begin{enumerate}[label=$\circ$]
    \item for any $1<p<\infty$, 
     \begin{align}\label{eq:singularityinxiequalbarygradient}
    D\hp(x_i-\xp(\xvec))=0 \quad\text{if and only if} \quad    
     x_i=\xp(\xvec);
 \end{align}
 \item if $1<p<2$,
  \begin{equation}\label{eq:singularityinxiequalbary*}
    D^2\hp(x_i-\xp(\xvec)) \quad \text{exists if and only if} \quad  x_i\neq\xp(\xvec);
 \end{equation}
 \item for any $1< p < \infty$, $p\neq 2$
  \begin{equation}\label{eq:singularityinxiequalbary}
    \det (D^2\hp(x_i-\xh(\xvec)))\neq0 \quad\text{if and only if} \quad 
     x_i\neq\xp(\xvec),
 \end{equation}
 since  $\det (\1+u\otimes v )= 1+u\cdot v$ by the matrix determinant lemma.
\end{enumerate}
\end{remark}
Motivated by the previous remark, we now construct a partition of $\RR^{Nd}$ based on the observations \eqref{eq:singularityinxiequalbary*} and \eqref{eq:singularityinxiequalbary}. 
For every $S\subset\{1,\dots, N\}$,
we thus define 
  \begin{equation}\label{eq:defDS}
     D_S:=\{ \xvec=(\xdots)\in\RR^{Nd}: \, \xp(\xvec)=x_i \,\, \text{for every} \, i\in S, \, \text{and} \, \xp(\xvec)\neq x_i \, \text{for every} \, i\not\in S \}.
 \end{equation}
Then, clearly, $\RR^{Nd}=\underset{S}\bigcup D_S$. 

\subsection{Absolute continuity of the $W_p$-barycenter}  

\begin{proposition}\label{prop:Dzerocase}
Let $\spt\gamma_p$ be $c_p$-monotone. Then there exists a countable cover $\{U_m\}_{m\in\NN}$ of the set $D_\emptyset\cap\spt\gamma_p$ with the following property: For every $m\in\NN$ there exists $L_m>0$ such that 
 \begin{equation*}\label{eq:inverselipschitzpD0}
     |\xp(\yvec)-\xp(\ytildevec)|\ge L_m|\yvec-\ytildevec|
 \end{equation*}
 for any $\yvec=(\ydots),\ytildevec=(\ytildedots)\in D_\emptyset\cap\spt\gamma_p\cap U_m$.
 \end{proposition}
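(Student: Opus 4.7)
The plan is to use Lemma~\ref{lem:firstinequality} as the main tool: on $D_\emptyset$, by Remark~\ref{rmk:singularities} (\eqref{eq:singularityinxiequalbary}), the matrices $D^2 h_p(x_i - \xp(\xvec))$ are invertible (in fact positive definite) for \emph{every} $i \in \{1,\dots,N\}$, not only for a single index. So at each $\xvec \in D_\emptyset \cap \spt\gamma_p$ the hypotheses of Lemma~\ref{lem:firstinequality} hold simultaneously for every choice of $i_0$.

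First I would check the local $C^1$ hypothesis on $F$ needed in Lemma~\ref{lem:firstinequality}. Since $h_p \in \mathcal{C}^2(\RR^d \setminus\{0\})$ and by continuity of $\xp$ (Corollary~\ref{cor: continuity bary}) together with $\xvec \in D_\emptyset$, for $r,\delta$ small enough the function $F$ is $\mathcal{C}^1$ on $B(\xvec,r)\times B(\xp(\xvec),\delta)$, because $y_i - z$ stays away from the origin there for every $i$. So Lemma~\ref{lem:firstinequality} is applicable with any $i_0$.

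Next, for each $i_0$ and for $\yvec,\ytildevec \in \spt\gamma_p \cap B(\xvec,r)$ with $r$ chosen so that \eqref{eq:firstinequality} holds, I would apply Cauchy--Schwarz and Young's inequality to the left-hand side of \eqref{eq:firstinequality}, namely
\[
(y_{i_0}-\tilde y_{i_0})^T M_{i_0}(\xvec)(\xp(\yvec)-\xp(\ytildevec)) \leq \tfrac{\Lambda_{i_0}}{2}|y_{i_0}-\tilde y_{i_0}|^2 + \tfrac{|M_{i_0}(\xvec)|^2}{2\Lambda_{i_0}}|\xp(\yvec)-\xp(\ytildevec)|^2,
\]
which yields
\[
\tfrac{\Lambda_{i_0}}{2}|y_{i_0}-\tilde y_{i_0}|^2 \leq \tfrac{|M_{i_0}(\xvec)|^2}{2\Lambda_{i_0}}|\xp(\yvec)-\xp(\ytildevec)|^2 + \eps N(1+|M_{i_0}(\xvec)|)|\yvec-\ytildevec|^2.
\]
Summing over $i_0 = 1,\dots,N$, using $\sum_{i_0}|y_{i_0}-\tilde y_{i_0}|^2 = |\yvec-\ytildevec|^2$, and then choosing $\eps=\eps(\xvec)$ so small that the cumulative error is absorbed into $\tfrac12 \min_{i_0}\Lambda_{i_0}\,|\yvec-\ytildevec|^2$, gives a constant $L_{\xvec} > 0$ and a radius $r_{\xvec} > 0$ with
\[
|\xp(\yvec) - \xp(\ytildevec)| \geq L_{\xvec}|\yvec-\ytildevec| \quad \text{for all } \yvec,\ytildevec \in \spt\gamma_p \cap B(\xvec,r_{\xvec}).
\]
In particular the estimate holds on $D_\emptyset \cap \spt\gamma_p \cap B(\xvec,r_{\xvec})$.

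Finally, since $\RR^{Nd}$ is second countable (hence Lindelöf), the open cover $\{B(\xvec,r_{\xvec})\}_{\xvec \in D_\emptyset \cap \spt\gamma_p}$ admits a countable subcover $\{U_m = B(\xvec^{(m)}, r_{\xvec^{(m)}})\}_{m\in\NN}$ with corresponding constants $L_m \coloneq L_{\xvec^{(m)}}$, which is the asserted cover. The main (mild) obstacle is the bookkeeping to guarantee that all the $C^1$--regularity and proximity hypotheses of Lemma~\ref{lem:firstinequality} hold on a \emph{single} ball $B(\xvec,r_{\xvec})$ that works uniformly for all $N$ indices $i_0$ simultaneously; this is handled by taking the minimum over $i_0$ of the radii produced by the lemma and then shrinking further, using continuity of $\xp$ and of $D^2h_p$ away from the origin, to a single radius $r_{\xvec}$.
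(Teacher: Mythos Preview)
Your proposal is correct and follows essentially the same approach as the paper: verify the $\mathcal C^1$ hypothesis on $F$ near any $\xvec\in D_\emptyset$, apply Lemma~\ref{lem:firstinequality} for every index $i_0$, combine the resulting inequalities to absorb the $\eps$-error, and extract a countable subcover via second countability. The only cosmetic difference is in the combination step: the paper first sums \eqref{eq:firstinequality} over $i$ and then applies a single Cauchy--Schwarz bound to the summed left-hand side, whereas you apply Young's inequality to each individual left-hand side before summing; both manipulations yield the same local inverse-Lipschitz estimate.
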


 \begin{proof}
 Let $\xvec=(\xdots)\in D_\emptyset$ and take $r=r(\xvec)$ small enough such that $B(\xvec,r)\subset D_\emptyset$ and such that $\xp(\xvec)$ does not lie in the closure of $\pi^i\left(B(\xvec,r)\right)$ for any $i\in\{1, \dots, N\}$. Such a $r$ exists because the set $D_\emptyset$ is open by continuity of the map $\xp$ (see Corollary~\ref{cor: continuity bary}); indeed, $D_\emptyset = \bigcap_{i=1}^N\left((\xp - \pi^i)^{-1}\left(\RR^{Nd}\setminus \{\mathbf{0}\}\right)\right)$. 
 Now take $\delta>0$ such that $\pi^i\left(B(\xvec,r)\right)\cap B(\xp(\xvec),\delta)=\emptyset$ for every $i=1,\dots, N$. Then $h\in \mathcal{C}^2\left( \bigcap_{i=1}^N \pi^i\left(B(\xvec,r)\right) -B(\xh(\xvec),\delta) \right) $.
 Indeed, this is clearly true for $p\ge 2$ and holds true for $1<p<2$ thanks to \eqref{eq:singularityinxiequalbary*} in Remark \ref{rmk:singularities}. Moreover, by definition of $D_{\emptyset}$ and by \eqref{eq:singularityinxiequalbary}, $\det (D^2\hp(x_i-\xp(\xvec)))\neq0$ for every $i\in\{1,\dots,N\}$. We can therefore apply Lemma \ref{lem:firstinequality}, for every $i=1,\dots,N$ and summing \eqref{eq:firstinequality} over $i$, one gets that for every $\eps>0$ there exists $r>0$, possibly smaller than the previous one, such that
\begin{equation*}
    \sum_{i=1}^N (y_i-\widetilde{y}_i)^\intercal D^2\hp(x_i-\xp(\xvec)) \left(\xp(\yvec)-\xp(\ytildevec)\right)\ge \sum_{i=1}^N \Lambda_i |y_i-\widetilde y_1|^2-\eps N\sum_{i=1}^N(1+M_i(\xvec))|\yvec-\ytildevec |^2,
\end{equation*}
for every $\yvec,\ytildevec\in B(\xvec,r)\cap\spt\gamma_p$, and therefore 
\begin{equation*}
NM(\xvec)|\yvec-\ytildevec||\xp(\yvec)-\xp(\ytildevec)|\ge \left(\Lambda_m-\eps N^2(1+M(\xvec))\right)|\yvec-\ytildevec|^2,
\end{equation*}
where $M(\xvec):=\max_i M_i(\xvec)$ and $\Lambda_m:=\min_i \Lambda_i$.
By choosing $\eps>0$ such that $\Lambda_m-\eps N^2(1+M(\xvec))>0$ and a suitable $r>0$, we get 
\begin{equation*}
    |\xp(\yvec)-\xp(\ytildevec)|\ge L(\xvec)|\yvec-\ytildevec|, 
\end{equation*}
 for every $\yvec,\ytildevec\in B(\xvec,r(\xvec))\cap \spt\gamma_p$.
Clearly $D_\emptyset\cap\spt\gamma_p\subset\bigcup_{\xvec\in D_\emptyset}B(\xvec,r)$. As every subset of $\RR^d$ is second countable, we can extract countably many points $\{\xvec_m\}$ such that $D_\emptyset\cap\spt\gamma_p\subset\bigcup_{m\in\NN}B(\xvec_m,r_m)$.
\end{proof}

\begin{lemma}\label{lem:diameterDempty}
 Let $\spt\gamma_p$ be $c_p$-monotone, and let $\{U_m\}_{m\in\NN}$ be the countable cover of $D_\emptyset\cap\spt\gamma_p$ defined in Proposition \ref{prop:Dzerocase}.

If $E\subset\RR^d$ is such that $\diam(E)<\delta$ for some $\delta>0$, then\footnote{Here and in the following, we set $\diam \emptyset= 0$, as commonly done in the context of Hausdorff measure.}
\begin{equation}\label{eq:diameterDempty}
\diam(\pi^i(\xp^{-1}(E)\cap D_\emptyset \cap U_m\cap\spt\gamma_p) )<\frac {\delta} {L_m} 
\end{equation}
for every $i=1,\dots, N$ and for every $m\in\NN$.

 In particular, if $E\subset\RRd$ is such that $\LL^d(E)=0$, then
    \begin{equation}\label{eq:hausdorffzero1}
     \LL^d(\pi^i(\xp^{-1}(E)\cap D_\emptyset \cap U_m \cap \spt\gammap))=\HH^d(\pi^i(\xp^{-1}(E)\cap D_\emptyset \cap U_m \cap \spt\gammap))=0
 \end{equation}
 for every $i=1,\dots, N$.
 \end{lemma}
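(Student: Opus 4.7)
The plan is to reduce everything to a direct application of the inverse Lipschitz estimate on $U_m$ provided by Proposition~\ref{prop:Dzerocase}, and then to run a standard Hausdorff-covering argument in $\RR^d$ to get the null-set statement.

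For the diameter estimate \eqref{eq:diameterDempty}, I would take any two points $\yvec, \ytildevec \in \xp^{-1}(E)\cap D_\emptyset \cap U_m\cap \spt\gammap$. By Proposition~\ref{prop:Dzerocase} applied to $\yvec$ and $\ytildevec$,
\[
L_m |\yvec-\ytildevec| \le |\xp(\yvec)-\xp(\ytildevec)| \le \diam(E) < \delta,
\]
since $\xp(\yvec),\xp(\ytildevec)\in E$. Projecting to the $i$-th component and using $|\pi_i(\yvec)-\pi_i(\ytildevec)|\le |\yvec-\ytildevec|$, this gives $|\pi_i(\yvec)-\pi_i(\ytildevec)|\le \diam(E)/L_m < \delta/L_m$. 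Taking the supremum over $\yvec,\ytildevec$ yields \eqref{eq:diameterDempty}.

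For the null-set statement I would use the equivalence $\LL^d(E)=0 \iff \HH^d(E)=0$ on $\RR^d$. Fix $\eta>0$ and $\delta>0$. By definition of Hausdorff $d$-measure, there exists a countable cover $\{E_j\}_{j\in\NN}$ of $E$ with $\diam(E_j)<\delta$ for all $j$ and $\sum_j (\diam E_j)^d < \eta$. Then
\[
\pi_i\bigl(\xp^{-1}(E)\cap D_\emptyset \cap U_m \cap \spt\gammap\bigr) \subset \bigcup_{j\in\NN}\pi_i\bigl(\xp^{-1}(E_j)\cap D_\emptyset \cap U_m \cap \spt\gammap\bigr),
\]
and by the diameter estimate just proved, each piece on the right has diameter at most $\diam(E_j)/L_m < \delta/L_m$. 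Hence this is a cover of $\pi_i(\xp^{-1}(E)\cap D_\emptyset \cap U_m\cap \spt\gammap)$ by sets of diameter less than $\delta/L_m$, and the sum of the $d$-th powers of their diameters is bounded by $L_m^{-d}\eta$. Letting $\eta\to 0$ (with $\delta\to 0$) gives $\HH^d(\pi_i(\cdots))=0$, and hence $\LL^d(\pi_i(\cdots))=0$ as well.

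The main (and only real) obstacle is making sure the cover structure is preserved under the maps involved: the inverse Lipschitz inequality from Proposition~\ref{prop:Dzerocase} only holds for pairs \emph{inside} the same $U_m$ and inside $D_\emptyset\cap\spt\gammap$, which is precisely why both the diameter estimate and the null-set assertion are stated slice by slice for each $m$ and on this intersection, rather than globally on $\spt\gammap$. Once one respects this restriction, the argument is a routine covering computation and no further regularity or finer properties of $\xp$ are needed.
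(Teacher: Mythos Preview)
Your proposal is correct and follows essentially the same approach as the paper: apply the inverse Lipschitz estimate from Proposition~\ref{prop:Dzerocase} on each $U_m$ to bound the diameter of the projection, then run the standard Hausdorff covering argument with a cover $\{E_j\}$ of $E$ to conclude $\HH^d=0$. The paper's proof is virtually identical, including the chain $|\pi_i(\yvec)-\pi_i(\ytildevec)|\le|\yvec-\ytildevec|\le L_m^{-1}|\xp(\yvec)-\xp(\ytildevec)|$ and the bound $\sum_j \diam(\pi_i(\cdots))^d \le L_m^{-d}\sum_j(\diam E_j)^d$.
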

 \begin{proof}
 Let us fix $U_m$ and set $\widetilde E \coloneq \xp^{-1}(E)\cap D_\emptyset\cap\spt\gammap\cap U_m$. For any $w,\widetilde w\in\widetilde E$, there exist $z,\widetilde z \in E$ such that $w\in \xp^{-1}(\{z\})\cap D_\emptyset\cap\spt\gammap\cap U_m$ and $\widetilde w\in \xp^{-1}(\{\widetilde z\})\cap D_\emptyset\cap\spt\gammap\cap U_m$. By Proposition~\ref{prop:Dzerocase}, we have that 
 \[|\pi^i(w)-\pi^i(\widetilde w)|\le\left(\sum_{i=1}^N|\pi^i(w)-\pi^i(\widetilde w)|^2\right)^{\frac 12}\le \frac{1}{L_m} |\xp(w)-\xp(\widetilde w)|= \frac{1}{L_m} |z-\widetilde z|\le \frac {\delta} {L_m}.\]
 Now, if $E\subset \RRd$ is such that $\LL^d(E)=0$, let $\{E_n\}_{n}$ be a countable cover of $E$, such that $\diam(E_n)<\delta$ for every $n$, for some $\delta>0$. Then $\{\pi^i(\xp^{-1}(E_n)\cap D_\emptyset\cap U_m \cap \spt\gammap)\}_n$ is a cover of $\pi^i(\xp^{-1}(E)\cap D_\emptyset\cap U_m \cap \spt\gammap)$ such that by inequality \eqref{eq:diameterDempty}, $\diam(\pi^i(\xp^{-1}(E_n)\cap D_\emptyset\cap U_m \cap \spt\gammap))<\frac{\delta}{L_m}$ with $L_m>0$. By the definition of Hausdorff measure this implies \eqref{eq:hausdorffzero1}. 
 
For completeness we include the details: note that 
\begin{align*}
    \HH^d_{\frac \delta L}(\pi^i(\xp^{-1}(E)\cap D_\emptyset \cap U_m \cap \spt\gammap))&\le c_d\sum\diam (\pi^i(\xp^{-1}(E)\cap D_\emptyset \cap U_m \cap \spt\gammap))^d\\&\le c_d \frac{1}{L_m^d}\sum\diam (E_n)^d.
\end{align*}
By taking the infimum over all the countable covers $\{E_n\}_{n}$  with diameter less than $\delta$, \[\HH^d_{\frac \delta L_m}(\pi^i(\xp^{-1}(E)\cap D_\emptyset\cap U_m\cap\spt\gamma_p))\le \frac{1}{L_m^d} \HH^d_{\delta}(E),\] and, passing to the limit for $\delta\to 0$, $\HH^d(\pi^i(\xp^{-1}(E)\cap D_\emptyset \cap U_m \cap \spt\gammap))\le \frac{1}{L_m^d}\HH^d(E)=0$.
 \end{proof}

We now turn to the case $S\neq \emptyset$, where the following holds:
 \begin{lemma}\label{lem:diameterDS2}
Let $S\subset\{1,\dots, N\}$ be such that $S\neq\emptyset$.

If $E\subset\RR^d$ is such that $\diam(E)<\delta$ for some $\delta>0$, then 
 \[\diam(\pi^i(\xp^{-1}(E)\cap D_S) )<\delta \quad \text{for every} \quad i\in S.\]
 In particular, if $E\subset\RRd$ is such that $\LL^d(E)=0$, then 
    \begin{equation*}\label{eq:hausdorffzero2}
     \LL^d(\pi^i(\xp^{-1}(E)\cap D_S))=\HH^d(\pi^i(\xp^{-1}(E)\cap D_S))=0 \quad \text{for every} \quad i\in S.
 \end{equation*}
 \end{lemma}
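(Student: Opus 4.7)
The plan is to exploit the defining property of $D_S$ to show that the set $\pi_i(\xp^{-1}(E)\cap D_S)$ is in fact a subset of $E$ whenever $i\in S$, which reduces both claimed inequalities to tautologies.

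First I would unpack the definition \eqref{eq:defDS}: by construction, if $\xvec = (\xdots) \in D_S$ and $i\in S$, then $\pi_i(\xvec) = x_i = \xp(\xvec)$. Equivalently, the two maps $\pi_i$ and $\xp$ coincide on $D_S$ for every index $i\in S$. Consequently, for any set $E\subset\RR^d$,
\begin{equation*}
\pi_i\bigl(\xp^{-1}(E)\cap D_S\bigr) \;=\; \xp\bigl(\xp^{-1}(E)\cap D_S\bigr) \;\subset\; E, \qquad i\in S.
\end{equation*}

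From this inclusion, the diameter statement is immediate: monotonicity of the diameter gives $\diam(\pi_i(\xp^{-1}(E)\cap D_S))\le \diam(E)<\delta$. For the measure statement, if $\LL^d(E)=0$, then by monotonicity of the Lebesgue measure and of the $d$-dimensional Hausdorff measure (recalling that $\LL^d$ and $\HH^d$ agree up to a dimensional constant on Borel sets in $\RR^d$), we likewise obtain $\LL^d(\pi_i(\xp^{-1}(E)\cap D_S))=\HH^d(\pi_i(\xp^{-1}(E)\cap D_S))=0$.

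In contrast to Lemma~\ref{lem:diameterDempty}, no injectivity estimate for $\xp$ and no $c_p$-monotonicity assumption on $\spt\gammap$ are required, because on $D_S$ the coordinates indexed by $S$ are rigidly locked to the value of $\xp(\xvec)$. The only mildly subtle point is a measurability remark: since $D_S$ is a Borel set (being defined through continuous maps, cf.\ Corollary~\ref{cor: continuity bary}) and $\pi_i$ is Lipschitz, the image $\pi_i(\xp^{-1}(E)\cap D_S)$ is contained in the Borel set $E$ and one should interpret the Lebesgue-measure statement in terms of outer measure to avoid measurability issues; no other obstacle arises.
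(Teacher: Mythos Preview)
Your proof is correct and follows the same key observation as the paper: on $D_S$ one has $\pi_i=\xp$ for every $i\in S$, so the projected set is contained in $E$. The paper phrases the diameter estimate pointwise and defers the measure-zero conclusion to the covering argument of Lemma~\ref{lem:diameterDempty}, whereas your direct inclusion $\pi_i(\xp^{-1}(E)\cap D_S)\subset E$ dispatches both parts at once; this is a minor streamlining, not a different approach.
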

 Note that this result does not depend on the plan $\gamma_p$, but only on properties of the function $\xp$ on the set $D_S$.
 \begin{proof}
  Let $\widetilde E \coloneq \xp^{-1}(E)$ and take $w,\widetilde w\in\widetilde E$. Then there exist $z,\widetilde z \in E$ such that $w\in \xp^{-1}(\{z\})\cap D_S$ and $\widetilde w\in \xp^{-1}(\{\widetilde z\})\cap D_S$. By definition \eqref{eq:defDS} of $D_S$, it follows that 
 \[|\pi^i(w)-\pi^i(\widetilde w)|= |z-\widetilde z|\le \delta \quad \text{for every} \quad i\in S. \]

The proof of the second part is analogous to the one of Lemma~\ref{lem:diameterDempty}.
 \end{proof}

 \begin{theorem}\label{th:absolutecontinuitypsmall}
Assume that  $\mu_1,\dots,\mu_N\ll\LL^d$. Then, if $\spt\gamma_p$ is $c_p$-monotone, 
 \begin{equation*}
     \nu_p:=(\xp)_{\sharp}\gamma_p\ll\LL^d.
 \end{equation*}
\end{theorem}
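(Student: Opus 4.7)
The plan is to reduce the problem directly to Lemmas~\ref{lem:diameterDempty} and~\ref{lem:diameterDS2} by decomposing $\xp^{-1}(E)\cap\spt\gamma_p$ along the finite partition $\RR^{Nd}=\bigsqcup_{S\subset\{1,\dots,N\}} D_S$ and using the standard bound $\gamma_p(B)\le \mu_i(\pi_i(B))$ for transport plans. Concretely, I would fix a Borel set $E\subset\RR^d$ with $\LL^d(E)=0$ (it suffices to show $\nu_p(E)=0$ for such sets). By continuity of $\xp$ (Corollary~\ref{cor: continuity bary}) the set $\xp^{-1}(E)$ is Borel, and I would write
\begin{equation*}
\xp^{-1}(E)\cap\spt\gamma_p \;=\; \bigsqcup_{S\subset\{1,\dots,N\}} A_S, \qquad A_S \coloneq \xp^{-1}(E)\cap D_S\cap\spt\gamma_p.
\end{equation*}
Since there are only $2^N$ such sets, it suffices to prove $\gamma_p(A_S)=0$ for every $S$.

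For $S\neq\emptyset$, I would pick any index $i\in S$ and use that $\pi_i^\sharp\gamma_p=\mu_i$ gives the bound $\gamma_p(A_S)\le \mu_i(\pi_i(A_S))$ (the projection $\pi_i(A_S)$ is analytic, hence $\mu_i$-measurable). Lemma~\ref{lem:diameterDS2} yields $\LL^d(\pi_i(\xp^{-1}(E)\cap D_S))=0$, and the absolute continuity $\mu_i\ll\LL^d$ then forces $\gamma_p(A_S)\le \mu_i(\pi_i(A_S))=0$. This is precisely where the hypothesis that \emph{all} marginals are absolutely continuous enters: since $S$ can range over arbitrary nonempty subsets of $\{1,\dots,N\}$, one needs to be able to choose an absolutely continuous $\mu_i$ with $i\in S$ for every such $S$.

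For $S=\emptyset$, I would further split $A_\emptyset$ using the countable cover $\{U_m\}_{m\in\NN}$ of $D_\emptyset\cap\spt\gamma_p$ provided by Proposition~\ref{prop:Dzerocase}:
\begin{equation*}
A_\emptyset \;=\; \bigcup_{m\in\NN}\bigl(A_\emptyset\cap U_m\bigr) \;=\; \bigcup_{m\in\NN}\bigl(\xp^{-1}(E)\cap D_\emptyset\cap U_m\cap\spt\gamma_p\bigr).
\end{equation*}
Lemma~\ref{lem:diameterDempty} gives $\LL^d(\pi_1(A_\emptyset\cap U_m))=0$, so by the same projection bound $\gamma_p(A_\emptyset\cap U_m)\le\mu_1(\pi_1(A_\emptyset\cap U_m))=0$, and countable subadditivity yields $\gamma_p(A_\emptyset)=0$. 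Combining all cases shows $\gamma_p(\xp^{-1}(E))=\gamma_p(\xp^{-1}(E)\cap\spt\gamma_p)=0$, i.e.\ $\nu_p(E)=0$, which proves $\nu_p\ll\LL^d$. The final assertion about the Wasserstein barycenter is then immediate from Proposition~\ref{prop:equivalence}.

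The substantive work is all in the preparatory results; here the only delicacy is the measurability of projections, handled by the classical fact that the projection of a Borel set is analytic and therefore universally measurable, which legitimizes the inequality $\gamma_p(B)\le\mu_i(\pi_i(B))$. I do not anticipate a genuine obstacle in assembling the argument, as the partition $\{D_S\}$ was designed precisely to separate the two regimes (regular points covered by Proposition~\ref{prop:Dzerocase}, singular points handled by the trivial graph structure of $\xp$ on $D_S$ for $S\neq\emptyset$).
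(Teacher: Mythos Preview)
Your proposal is correct and follows essentially the same route as the paper's proof: decompose $\xp^{-1}(E)\cap\spt\gamma_p$ along the partition $\{D_S\}$, handle $S=\emptyset$ via the countable cover of Proposition~\ref{prop:Dzerocase} and Lemma~\ref{lem:diameterDempty}, handle $S\neq\emptyset$ via Lemma~\ref{lem:diameterDS2}, and conclude with the marginal bound $\gamma_p(B)\le\mu_i(\pi_i(B))$ and absolute continuity of the $\mu_i$. You are in fact slightly more careful than the paper in flagging the measurability of $\pi_i(A_S)$ via analyticity; the paper uses this inequality without comment.
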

\begin{proof}
   Let us consider a set $E$ such that $\LL^d(E)=0$. Then, given the countable cover $\{U_m\}_{m\in\NN}$ of $D_\emptyset\cap\gamma_p$ defined in Proposition \ref{prop:Dzerocase},
 \begin{align*}
(\xp)_\sharp\gamma_p(E)=&\gamma_p\left(\xp^{-1}(E)\cap\spt\gamma_p\cap \bigcup_{S\subset \{1,\dots,N\}}D_S\right)\\&\le\gamma_p\left(\xp^{-1}(E)\cap\spt\gamma_p\cap D_\emptyset\right)+\sum_{S\neq\emptyset}\gamma_p\left(\xp^{-1}(E)\cap\spt\gamma_p\cap D_S\right)\\
&\le \gamma_p\left(\xp^{-1}(E)\cap\spt\gamma_p\cap D_\emptyset\cap \bigcup_{m\in\NN}U_m\right)+\sum_{S\neq\emptyset}\gamma_p\left(\xp^{-1}(E)\cap\spt\gamma_p\cap D_S\right)\\&\le \sum_{m\in\NN}\gamma_p(\xp^{-1}(E)\cap D_\emptyset\cap U_m\cap\spt\gamma_p)+\sum_{S\neq \emptyset}\gamma_p\left(\xp^{-1}(E)\cap\spt\gamma_p\cap D_S\right)\\ &\le\sum_{m}\mu_{1}(\pi^{1}(\xp^{-1}(E)\cap D_\emptyset\cap\spt\gamma_p\cap U_m))+\sum_{S\neq \emptyset}\mu_{i_S}\left(\pi^{i_S}(\xp^{-1}(E)\cap\spt\gamma_p\cap D_S)\right),
 \end{align*}
 where $i_S\in S$ for every $S\neq\emptyset$. The last inequality is due to the marginal constraint on the transport plan $\gammap$.
 Thanks to Lemma \ref{lem:diameterDempty}, $\LL^d(\pi^{1}(\xp^{-1}(E)\cap D_\emptyset\cap\spt\gamma_p\cap U_m))=\mathcal{H}^d(\pi^{1}(\xp^{-1}(E)\cap D_\emptyset\cap\spt\gamma_p\cap U_m))=0$ for every $m\in\NN$, and by Lemma \ref{lem:diameterDS2},  $\LL^d(\pi^{i_S}(\xp^{-1}(E)\cap\spt\gamma_p\cap D_S))=\mathcal{H}^d(\pi^{i_S}(\xp^{-1}(E)\cap\spt\gamma_p\cap D_S))=0$ for every $i_S\in S$ and for every $S\subset\{1,\dots,N\}$, $S\neq\emptyset$. We conclude thanks to the absolute continuity of $\mu_1,\dots, \mu_N$.
\end{proof}

\subsection{Absolute continuity of the $W_p$-barycenter: the case \texorpdfstring{$p\ge2$}{p>=2}}  
As pointed out in Remark~\ref{rmk:singularities}, $\hp\in \mathcal{C}^2(\RRd)$ if $p\ge 2$. This observation allows us to study a reduced barycenter problem on each set $D_S$ with $S\neq\{1,\dots,N\}$, where $D^2 \hp$ does not degenerate and thus an injectivity estimate analogous to the one of Proposition~\ref{prop:Dzerocase} holds.
\begin{lemma}\label{lem:reducedproblem}
Let $p\ge2$ and $S\subset\{1,\dots, N\}$, $S\neq\{1,\dots, N\} $. Then for every $\xvec=(\xdots)\in D_S$, $x_i$ is a solution of the variational problem 
\begin{equation}\label{eq:reducedbary}
    {\xp}^{S^\mathsf{c}}(\xvec_{S^\mathsf{c}}):=\underset{z\in\RR^d}\argmin\sum_{j\not\in S}\lambda_j|x_j-z|^p \quad \text{for every}\quad  i\in S,
\end{equation}
 where, if $|S|=K<N$, $\xvecS\in \RR^{(N-K)d}$ is the vector with components $x_j$, $j\not\in S$. 
\end{lemma}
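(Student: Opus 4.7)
\bigskip

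\noindent\textbf{Proof plan for Lemma~\ref{lem:reducedproblem}.}

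The plan is to read off the conclusion from the Euler--Lagrange equation \eqref{eq:euler-lag bary} for the full $p$-barycenter, using the key fact that for $p\ge 2$ the map $w\mapsto|w|^{p-2}w$ is well defined and continuous on all of $\RR^d$ (with value $0$ at $w=0$). This regularity is exactly what fails for $1<p<2$ and is why the restriction $p\ge 2$ appears in the hypotheses.

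First I would fix $\xvec=(\xdots)\in D_S$ and set $z^*\coloneq\xp(\xvec)$. By the definition \eqref{eq:defDS} of $D_S$, we have $x_i=z^*$ for every $i\in S$ and $x_j\neq z^*$ for every $j\notin S$. In particular, for $i\in S$ the vector $|x_i-z^*|^{p-2}(x_i-z^*)$ vanishes. Applying the first-order optimality condition \eqref{eq:euler-lag bary} at $z=z^*$, which is valid because $p\ge 2$ makes $D\hp$ continuous everywhere, the terms with indices in $S$ drop out and we are left with
\begin{equation*}
    \sum_{j\notin S}\lambda_j\,p\,|x_j-z^*|^{p-2}(x_j-z^*)=0.
\end{equation*}

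Next I would recognize this as precisely the Euler--Lagrange equation for the reduced functional $z\mapsto \sum_{j\notin S}\lambda_j|x_j-z|^p$ associated with the problem \eqref{eq:reducedbary}. Because $S\neq\{1,\dots,N\}$, the index set $S^\mathsf{c}$ is nonempty, so this functional is strictly convex and coercive on $\RR^d$ (as a nontrivial positive combination of strictly convex coercive functions, for $p>1$). Hence it admits a unique minimizer, which by definition is $\overline{x}_p^{S^\mathsf{c}}(\xvecS)$, and that minimizer is the unique solution of the displayed equation. We conclude that $z^*=\overline{x}_p^{S^\mathsf{c}}(\xvecS)$, and since $x_i=z^*$ for every $i\in S$, the claim follows.

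The only potential obstacle is the boundary behavior at $w=0$ of the derivative of $\hp$, but this is exactly where the restriction $p\ge 2$ is used: under this assumption $D\hp$ extends continuously (in fact smoothly) through the origin with $D\hp(0)=0$, so the term $\lambda_i p|x_i-z^*|^{p-2}(x_i-z^*)$ is well defined and equals zero for each $i\in S$, allowing the clean reduction above. No further delicate estimates are needed.
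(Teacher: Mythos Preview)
Your proof is correct and follows exactly the same approach as the paper: use the first-order optimality condition for $\xp(\xvec)$, drop the terms with $i\in S$ since $Dh_p(0)=0$, and recognize the remaining equation as the Euler--Lagrange equation uniquely characterizing the reduced barycenter $\overline{x}_p^{S^\mathsf{c}}(\xvecS)$. One small remark: your aside that continuity of $w\mapsto|w|^{p-2}w$ at the origin ``is exactly what fails for $1<p<2$'' is not accurate---that map is continuous at $0$ for every $p>1$---so the argument (and the lemma) actually go through for all $1<p<\infty$; the hypothesis $p\ge 2$ is inherited from the surrounding section, where $D^2h_p$ must exist everywhere.
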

\begin{proof}
By optimality, the unique solution of \eqref{eq:reducedbary} is the unique point $z\in\RR^d$ such that 
\begin{equation}\label{eq:firstoptimalitybaryreduced}
    \sum_{j\not\in S}\lambda_jD\hp(x_j-z)=0.
\end{equation}
Moreover, by optimality of $\xp(\xvec)$,
\begin{equation}\label{eq:firstoptimalitybaryreduced-2}
    \sum_{j=1}^N\lambda_jD\hp(x_j-\xp(\xvec))=0.
\end{equation}
By definition of $D_S$ and by \eqref{eq:singularityinxiequalbarygradient}, $D\hp(x_i-\xp(\xvec))=0$ for every $i\in S$. Thus, \eqref{eq:firstoptimalitybaryreduced-2} becomes
\begin{equation*}
    \sum_{j\not\in S}\lambda_jD\hp(x_j-\xp(\xvec))=0,
\end{equation*}
and thus $\xp(\xvec)={\xp}^{S^\mathsf{c}}$. We conclude by recalling that $x_i=\xp(\xvec)$ for every $i\in S$ by definition of $D_S$.
\end{proof}
\begin{proposition}\label{prop:DScase}
Let $p\ge 2$, let $\spt\gamma_p$ be $c_p$-monotone and $S\subset\{1,\dots, N\}$, $S\neq\{1,\dots, N\}$. Then there exists a countable cover $\{U_m\}_{m\in\NN}$ of the set $D_S\cap\spt\gamma_p$ with the following property: For every $m\in\NN$, there exists $L_m>0$ such that 
 \begin{equation}\label{eq:inverselipschitzp}
     |\xp(\yvec)-\xp(\ytildevec)|\ge L_m\left(\sum_{j\not\in S}|y_j-\widetilde y_j|^2\right)^{\frac 12}
 \end{equation}
 for every $\yvec=(\ydots),\ytildevec=(\ytildedots)\in D_S\cap\spt\gamma_p\cap U_m$.
\end{proposition}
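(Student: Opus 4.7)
The plan is to adapt the strategy of Proposition~\ref{prop:Dzerocase} to this partially singular setting by applying Lemma~\ref{lem:firstinequality} only at indices $i_0 \in S^c$, and then exploiting the defining relation $y_i = \xp(\yvec)$ for $i \in S$ on $D_S$ to convert the resulting estimate into a lower bound on $|\xp(\yvec) - \xp(\tilde{\yvec})|$ in terms of the $S^c$-coordinates alone. To set up the lemma, fix $\xvec \in D_S \cap \spt\gamma_p$. The complement $S^c$ is nonempty since $S \neq \{1, \dots, N\}$, and by definition of $D_S$ one has $x_i \neq \xp(\xvec)$ for every $i \in S^c$, hence $\det D^2 h_p(x_i - \xp(\xvec)) \neq 0$ by \eqref{eq:singularityinxiequalbary}. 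The assumption $p \geq 2$ moreover guarantees $h_p \in \mathcal{C}^2(\RR^d)$, so the map $F$ from \eqref{eq:F} is $\mathcal{C}^1$ on all of $\RR^{Nd} \times \RR^d$, and Lemma~\ref{lem:firstinequality} is directly applicable with any such $i_0$.

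Summing the inequality provided by Lemma~\ref{lem:firstinequality} over $i_0 \in S^c$ and applying Cauchy--Schwarz on the left-hand side then yields, for every $\eps > 0$ and some $r = r(\xvec,\eps) > 0$,
\begin{equation*}
M_S(\xvec)\sqrt{|S^c|}\, |\xp(\yvec) - \xp(\tilde{\yvec})|\, A^{1/2} \geq \Lambda_S\, A - \eps N |S^c| \bigl(1 + M_S(\xvec)\bigr)|\yvec - \tilde{\yvec}|^2
\end{equation*}
for all $\yvec, \tilde{\yvec} \in \spt\gamma_p \cap B(\xvec, r)$, where $A := \sum_{j \in S^c}|y_j - \tilde{y}_j|^2$, $M_S(\xvec) := \max_{i \in S^c}|M_i(\xvec)|$, and $\Lambda_S := \min_{i \in S^c}\Lambda_i$.

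The step I expect to be the main obstacle is trading the full squared norm $|\yvec - \tilde{\yvec}|^2$ on the right-hand side for the partial sum $A$ --- a priori the full norm contains contributions from the $S$-coordinates that re-introduce $|\xp(\yvec) - \xp(\tilde{\yvec})|$ and threaten the absorption argument. The idea is to restrict further to $\yvec, \tilde{\yvec} \in D_S$: the identities $y_i = \xp(\yvec)$ and $\tilde{y}_i = \xp(\tilde{\yvec})$ for $i \in S$ give the key structural decomposition $|\yvec - \tilde{\yvec}|^2 = |S|\, B + A$ with $B := |\xp(\yvec) - \xp(\tilde{\yvec})|^2$. Substituting this into the previous inequality, then using AM--GM to split $M_S\sqrt{|S^c|}\, B^{1/2} A^{1/2}$ into $\tfrac{\Lambda_S}{4} A + CB$ and choosing $\eps$ small enough that the net coefficient of $A$ on the right remains a strictly positive multiple of $\Lambda_S$, one can rearrange to obtain $B \geq L(\xvec)^2\, A$ for some $L(\xvec) > 0$ --- which is precisely \eqref{eq:inverselipschitzp} on $B(\xvec, r(\xvec)) \cap D_S \cap \spt\gamma_p$. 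Since the open family $\{B(\xvec, r(\xvec))\}_{\xvec \in D_S \cap \spt\gamma_p}$ covers $D_S \cap \spt\gamma_p$, second countability of $\RR^{Nd}$ yields a countable subcover $\{U_m\}_{m \in \NN}$ with associated constants $L_m := L(\xvec_m)$, completing the proof.
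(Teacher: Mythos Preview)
Your proof is correct and reaches the same conclusion by a genuinely different route from the paper. The paper handles the problematic $S$-coordinates by invoking the reduced barycenter map $\xp^{S^{\mathsf c}}$ from Lemma~\ref{lem:reducedproblem}: on $D_S$ one has $y_i=\xp^{S^{\mathsf c}}(\yvec_{S^{\mathsf c}})$ for $i\in S$, and the Implicit Function Theorem applied to the reduced first-order condition shows that $\xp^{S^{\mathsf c}}$ is $\mathcal C^1$, hence locally Lipschitz in the $S^{\mathsf c}$-coordinates. This yields directly $|\yvec-\ytildevec|^2\le (1+|S|L_S)\,A$, after which the endgame is identical to that of Proposition~\ref{prop:Dzerocase}. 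You instead use only the raw defining relation $y_i=\xp(\yvec)$ on $D_S$, obtaining the exact decomposition $|\yvec-\ytildevec|^2=|S|\,B+A$ with $B=|\xp(\yvec)-\xp(\ytildevec)|^2$; the extra $B$-term on the right is then absorbed via Young's inequality. Your argument is more elementary in that it bypasses Lemma~\ref{lem:reducedproblem} and the IFT entirely, at the price of a slightly less transparent absorption step. The paper's detour, on the other hand, makes explicit the structural fact that on $D_S$ the barycenter depends smoothly on the $S^{\mathsf c}$-coordinates alone, and keeps the final estimate in exact parallel with the $D_\emptyset$ case.
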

\begin{proof}
Let $\xvec=(\xdots)\in D_S$. For any fixed $j_0\not\in S$, we can apply Lemma~\ref{lem:firstinequality}, which implies that for any given $\eps>0$ there exists $r>0$ such that inequality \eqref{eq:firstinequality} holds, i.e. 
for every $\yvec, \ytildevec\in B(\xvec, r)\cap\spt\gamma_p$
\begin{equation}\label{eq:firstinequalityp}
  (y_{j_0}-\widetilde{y}_{j_0})^\intercal\left(D^2\hp(x_{j_0}-\xp(\xvec))\right) \left(\xp(\yvec)-\xp(\ytildevec)\right)\ge \Lambda_{j_0} |y_{j_0}-\widetilde y_{j_0}|^2-\eps C_{j_0}|\yvec-\ytildevec|^2.
\end{equation}
Now, by Lemma \ref{lem:reducedproblem} we know that for every $i\in S$, $x_i=\xpS(\xvecS)$, where $\xpS$ is defined in \eqref{eq:reducedbary}, and that the function $\xpS:\RR^{(N-K)d}\to\RR^d$, with $K:=|S|<N$, is the only solution of \eqref{eq:firstoptimalitybaryreduced}.  Moreover for any $j\not\in S$, by \eqref{eq:singularityinxiequalbary} in Remark \ref{rmk:singularities}, $\det (D^2\hp(x_{j}-\xp(x)) )\neq 0$. Thus we can use Lemma \ref{lem:diffbary} to obtain the existence of an open neighborhood $U_{\xvecS}\subset\RR^{(N-K)d}$ of $\xvecS$ where $\xpS \in \mathcal{C}^1$ and therefore locally Lipschitz. Thus for $\tau>0$, such that $B(\xvecS,\tau)\Subset U_{\xvec_S} $, 
\begin{equation*}
    |\xpS(\yvecS)-\xpS(\ytildevecS)|^2\le L_S\sum _{j\not\in S}|y_j-\widetilde y_j|^2
\end{equation*}
for every $\yvec_S,\ytildevec_S\in B(\xvec_S,\tau)$,
where $L_S$ is the Lipschitz constant of $\xpS$ on $B(\xvec_S,\tau)$. This implies that if one chooses $r>0$ (possibly smaller) such that $\pi_{S^{\mathsf{c}}}(B(\xvec,r))\subset B(\xvec_S,\tau),$\footnote{Here $\pi_{S^{\mathsf{c}}}:\RR^{Nd}\to \RR^{(N-K)d}$, such that for $\mathbf{y}\in\RR^{Nd}$, $\pi_{S^{\mathsf{c}}}(\yvec)$ is the vector with components $y_j$, $j\notin S$.}
\begin{align}\label{eq:reducingnorm}
  |\yvec-\ytildevec|^2=\sum_{j=1}^N|y_j-\widetilde y_j|^2&= \sum_{j\not\in S}|y_j-\widetilde y_j|^2+\sum_{j\in S}|y_j-\widetilde y_j|^2\\
  \notag&= \sum_{j\not\in S}|y_j-\widetilde y_j|^2+\sum_{j\in S}|\xpS(\yvecS)-\xpS(\ytildevecS)|^2\le (1+ KL_s)\sum_{j\not\in S}|y_j-\widetilde y_j|^2,
\end{align}
for every $\yvec,\ytildevec\in B(\xvec,r)\cap D_S$, where the third equality follows from Lemma~\ref{lem:reducedproblem}.\\
By plugging equation \eqref{eq:reducingnorm} into equation \eqref{eq:firstinequalityp}, we get 
\begin{equation}\label{eq:newfirstineq}
  (y_{j_0}-\widetilde{y}_{j_0})^\intercal D^2 h_p(x_{j_0}-\xp(\xvec)) \left(\xp(\yvec)-\xp(\ytildevec)\right)\ge \Lambda_{j_0} |y_{j_0}-\widetilde y_{j_0}|^2-\eps C_{j_0}(1+ KL_s)\sum_{j\not\in S}|y_j-\widetilde y_j|^2
\end{equation}
for every $\yvec,\ytildevec\in B(\xvec,r)\cap D_S$, where $C_{j_0}=N(1+M_{i_0}(\xvec))>0$.

Applying the same reasoning to every $j\not\in S$ and summing \eqref{eq:newfirstineq} over all $j\notin S$, we obtain 
\begin{equation*}
    \sum_{j\not\in S}(y_{j}-\widetilde{y}_{j})^\intercal D^2\hp(x_{j}-\xp(\xvec))\left(\xp(\yvec)-\xp(\ytildevec)\right)\ge \sum_{j\not\in S} \Lambda_{j} |y_{j}-\widetilde y_{j}|^2-\eps C(1+ KL_s)\sum_{j\not\in S}|y_j-\widetilde y_j|^2,
\end{equation*}
where $C=(N-K)\max_{j\not\in S}C_j$.
Therefore, by the Cauchy-Schwarz inequality,
\begin{equation*}
    K^{\frac 12}\left(\sum_{j\not\in S}|y_{j}-\widetilde{y}_{j}|^2\right)^{\frac 12 }M|\xp(\yvec)-\xp(\ytildevec)|\ge \sum_{j\not\in S} \Lambda_S |y_{j}-\widetilde y_{j}|^2-\eps C(1+ KL_s)\sum_{j\not\in S}|y_j-\widetilde y_j|^2,
\end{equation*}
where $M\coloneq\max_{j\not\in S}|D^2\hp(x_{j}-\xp(\xvec)|$ and $\Lambda_S\coloneq\min_{j\not\in S} \Lambda_j$.
Choosing $\eps$ small enough and a suitable $r(\xvec)>0$, we conclude the existence of $L(\mathbf{x})>0$ such that \begin{equation*}|\xp(\yvec)-\xp(\ytildevec)|\ge L(\xvec)\sum_{j\not\in S}|y_j-\widetilde y_j|
 \end{equation*}
 for every $\yvec,\ytildevec\in B(\xvec,r(\xvec))\cap D_S\cap\spt\gamma_p$.
Clearly, $\spt\gamma_p\cap D_S\subset\bigcup_{\xvec\in D_S}B(\xvec,r(\xvec))$. As
 every subset of $\RR^{Nd}$ is second countable, we can extract countably many points $\{\xvec_m\}\subset D_S$ such that $D_S\cap\spt\gamma_p\subset\bigcup_{m\in\NN}B(\xvec_m,r_m)$. The claim then follows with $U_m\coloneq B(\xvec_m,r_m)$.
\end{proof}
\begin{lemma}\label{lem:diameterDS1}
 Let $\spt\gamma_p$ be $c_p$-monotone, let $S\subset\{1,\dots,N\}$ be such that $S\neq\{1,\dots,N\}$, and let $\{U_m\}_{m\in\NN}$ be the countable cover of $D_S\cap\spt\gamma_p$ defined in Proposition \ref{prop:DScase}.
 
If $E\subset\RR^d$ is such that $\diam(E)<\delta$ for some $\delta>0$, then 
 \[\diam(\pi^j(\xp^{-1}(E)\cap D_S\cap U_m\cap\spt\gamma_p) )<\frac {\delta} {L_m}\]
 for every $j\not\in S$ and for every $m\in\NN$.
 
 In particular, if $E\subset\RRd$ is such that $\LL^d(E)=0$, then
    \begin{equation*}
     \LL^d(\pi^j(\xp^{-1}(E)\cap D_s\cap U_m \cap\spt\gammap))=\HH^d(\pi^j(\xp^{-1}(E)\cap D_s\cap U_m\cap\spt\gammap))=0,
 \end{equation*}
 for every $j\not\in S$ and for every $m\in\NN$.
 \end{lemma}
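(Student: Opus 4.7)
The plan is to mimic the proof of Lemma~\ref{lem:diameterDempty} almost verbatim, replacing the inverse Lipschitz estimate of Proposition~\ref{prop:Dzerocase} by its analogue from Proposition~\ref{prop:DScase}. The latter provides, for every fixed $m\in\NN$, the key estimate
\begin{equation*}
    |\xp(\yvec)-\xp(\ytildevec)| \ge L_m\Bigl(\sum_{j\not\in S}|y_j-\widetilde y_j|^2\Bigr)^{1/2}
\end{equation*}
for all $\yvec,\ytildevec\in D_S\cap\spt\gamma_p\cap U_m$. In particular, for any single index $j\not\in S$, $|y_j-\widetilde y_j|\le L_m^{-1}|\xp(\yvec)-\xp(\ytildevec)|$.

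For the diameter bound, I would fix $m\in\NN$ and $j\not\in S$, and pick any two points $w,\widetilde w\in\xp^{-1}(E)\cap D_S\cap U_m\cap\spt\gamma_p$. Writing $z\coloneq\xp(w), \widetilde z\coloneq\xp(\widetilde w)\in E$, the above estimate gives
\begin{equation*}
    |\pi_j(w)-\pi_j(\widetilde w)|\le L_m^{-1}|z-\widetilde z|\le L_m^{-1}\diam(E)<\delta/L_m,
\end{equation*}
which is the required diameter bound by taking the supremum over $w,\widetilde w$.

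For the $\LL^d$/$\HH^d$ statement, assume $\LL^d(E)=0$ and cover $E$ by countably many sets $\{E_n\}_n$ with $\diam(E_n)<\delta$. Then $\{\pi_j(\xp^{-1}(E_n)\cap D_S\cap U_m\cap\spt\gamma_p)\}_n$ covers $\pi_j(\xp^{-1}(E)\cap D_S\cap U_m\cap\spt\gamma_p)$, with each piece of diameter less than $\delta/L_m$ by the first part. By the definition of the Hausdorff premeasure, one gets
\begin{equation*}
    \HH^d_{\delta/L_m}\bigl(\pi_j(\xp^{-1}(E)\cap D_S\cap U_m\cap\spt\gamma_p)\bigr)\le c_d L_m^{-d}\sum_n\diam(E_n)^d,
\end{equation*}
and passing to the infimum over covers and then $\delta\to 0$ yields $\HH^d(\pi_j(\xp^{-1}(E)\cap D_S\cap U_m\cap\spt\gamma_p))\le L_m^{-d}\HH^d(E)=0$. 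Since $\HH^d$ and $\LL^d$ agree on Borel subsets of $\RR^d$ up to a dimensional constant, this also gives the vanishing of $\LL^d$.

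No essential obstacle is expected: the only input beyond the definition of the sets $D_S$ and $U_m$ is the inverse Lipschitz estimate \eqref{eq:inverselipschitzp} of Proposition~\ref{prop:DScase}, which bounds $|y_j-\widetilde y_j|$ for $j\not\in S$ (note: this is exactly the range of indices for which the lemma claims the diameter/measure bound, consistent with the fact that on $D_S$ the coordinates $y_i$ for $i\in S$ coincide with $\xp(\yvec)$ and are controlled automatically via Lemma~\ref{lem:diameterDS2}).
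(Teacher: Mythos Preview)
Your proposal is correct and follows essentially the same approach as the paper: the diameter bound is obtained exactly as you indicate, by applying the inverse Lipschitz estimate \eqref{eq:inverselipschitzp} from Proposition~\ref{prop:DScase} to any two points $w,\widetilde w$ in $\xp^{-1}(E)\cap D_S\cap U_m\cap\spt\gamma_p$, and the $\HH^d$/$\LL^d$ part is declared in the paper to be ``analogous to the one of Lemma~\ref{lem:diameterDempty}'', which is precisely the argument you spell out.
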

 \begin{proof}
 Let us fix $U_m$ and call $\widetilde E = \xp^{-1}(E)\cap D_S\cap\spt\gammap\cap U_m$. Take $w,\widetilde w\in\widetilde E$, then there exist $z,\widetilde z \in E$ such that $w\in \xp^{-1}(\{z\})\cap D_S\cap\spt\gamma_p\cap U_m$ and $\widetilde w\in \xp^{-1}(\{\widetilde z\})\cap D_S\cap\spt\gamma_p\cap U_m$. By Proposition~\ref{prop:DScase}, we have that for every $j\not\in S$
 \[|\pi^j(w)-\pi^j(\widetilde w)|\le\left(\sum_{j\not\in S}|\pi^j(w)-\pi^j(\widetilde w)|^2\right)^{\frac 12}\le \frac{1}{L_m} |\xp(w)-\xp(\widetilde w)|= \frac{1}{L_m} |z-\widetilde z|\le \frac {\delta} {L_m}.\]
The proof of the second part is analogous to the one of Lemma \ref{lem:diameterDempty}.
 \end{proof}

\begin{theorem}\label{th:absolutecontinuitypbig}
Assume that $\mu_1\ll \LL^d$. Then, if $\spt\gamma_p$ is $c_p$-monotone, 
 \begin{equation*}
     \nu_p:={\xp}_{\sharp}\gamma_p \ll \LL^d.
 \end{equation*}
\end{theorem}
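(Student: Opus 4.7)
The plan is to adapt the proof of Theorem~\ref{th:absolutecontinuitypsmall} verbatim, except that since only $\mu_1$ is assumed absolutely continuous we must consistently project onto the \emph{first} coordinate via $\pi_1$, rather than onto an arbitrary index $i_S\in S$ as in the fully absolutely continuous case. The key observation enabling this is that in the decomposition $\RR^{Nd} = \bigcup_S D_S$, every $S\subset\{1,\dots,N\}$ falls into exactly one of two cases -- either $1\in S$ or $1\notin S$ -- and in each case we can obtain a suitable diameter/null-set bound on $\pi_1(\xp^{-1}(E)\cap D_S\cap\spt\gammap\cap U)$ for an appropriate covering piece $U$.

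First I would take an arbitrary set $E\subset\RRd$ with $\LL^d(E)=0$ and write
\begin{align*}
    (\xp)_\sharp\gammap(E) \le \sum_{S\ni 1} \gammap\bigl(\xp^{-1}(E)\cap\spt\gammap\cap D_S\bigr) + \sum_{S\not\ni 1} \gammap\bigl(\xp^{-1}(E)\cap\spt\gammap\cap D_S\bigr).
\end{align*}
For any $S$ with $1\in S$, the definition \eqref{eq:defDS} of $D_S$ gives $x_1=\xp(\xvec)$ on $D_S$; hence Lemma~\ref{lem:diameterDS2} with $i=1$ yields $\LL^d(\pi_1(\xp^{-1}(E)\cap D_S))=0$, and the marginal constraint $(\pi_1)_\sharp\gammap=\mu_1\ll\LL^d$ makes the corresponding term vanish (this covers in particular $S=\{1,\dots,N\}$).

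For $S$ with $1\notin S$ we automatically have $S\neq\{1,\dots,N\}$, so Proposition~\ref{prop:DScase} (which crucially requires $p\ge 2$) applies and produces a countable cover $\{U_m^S\}_{m\in\NN}$ of $D_S\cap\spt\gammap$ on which
\begin{equation*}
    |\xp(\yvec)-\xp(\ytildevec)| \ge L_m \Bigl(\sum_{j\notin S} |y_j-\widetilde y_j|^2\Bigr)^{1/2}.
\end{equation*}
Since $1\in S^{\mathsf{c}}$, the right-hand side in particular dominates $L_m|y_1-\widetilde y_1|$, so Lemma~\ref{lem:diameterDS1} applied with $j=1$ gives $\LL^d(\pi_1(\xp^{-1}(E)\cap D_S\cap U_m^S\cap\spt\gammap))=0$ for every $m$. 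Summing over the countable cover, a further application of the marginal constraint $(\pi_1)_\sharp\gammap=\mu_1\ll\LL^d$ shows that the second sum vanishes as well.

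I do not expect a genuine technical obstacle beyond careful bookkeeping of the cases; the conceptual point is the role of the hypothesis $p\ge 2$. It guarantees $\hp\in\mathcal{C}^2(\RRd)$, which is precisely what allows the Implicit Function Theorem argument in Lemma~\ref{lem:reducedproblem} and Proposition~\ref{prop:DScase} to deliver a Lipschitz control of $\xpS$ on $D_S$ for every proper subset $S$, and hence the injectivity estimate in the full-dimensional coordinates $\pi_{S^{\mathsf{c}}}$ (in particular $\pi_1$ whenever $1\in S^{\mathsf{c}}$). This is exactly what permits us to weaken the absolute-continuity assumption from \emph{all} marginals to just $\mu_1$.
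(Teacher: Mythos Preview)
Your proposal is correct and follows essentially the same approach as the paper's proof: the paper also splits the family of subsets $S\subset\{1,\dots,N\}$ into $\mathcal{F}_1=\{S:1\in S\}$ and its complement, handles $S\in\mathcal{F}_1$ via Lemma~\ref{lem:diameterDS2} with $i=1$, and handles $S\notin\mathcal{F}_1$ via Proposition~\ref{prop:DScase} together with Lemma~\ref{lem:diameterDS1} with $j=1$, concluding in both cases by the marginal constraint and $\mu_1\ll\LL^d$. Your identification of the role of $p\ge 2$ is also the one emphasised in the paper.
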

Notice that by Proposition \ref{prop:equivalence}, $\nup$ is a solution of \eqref{C2Mpbary}.

\begin{proof}
   Let us consider a set $E$ such that $\LL^d(E)=0$, and define the family $\mathcal{F}_1$ of subsets of  $\{1,\dots, N\}$ that contain $1$, i.e.\ 
\begin{equation}\label{eq:F1}
    \mathcal{F}_1:=\{S\subset \{1,\dots, N\} \, : \, 1 \in S \}.
\end{equation}
Then,
 \begin{align*}
(\xp)_\sharp\gamma_p(E)&=\gamma_p\left(\xp^{-1}(E)\cap\spt\gamma_p\cap \bigcup_{S\subset \{1,\dots,N\}}D_S\right)\\&\le\sum_{S\not\in \mathcal{F}_1}\gamma_p\left(\xp^{-1}(E)\cap\spt\gamma_p\cap D_S\right)+\sum_{S\in \mathcal{F}_1}\gamma_p\left(\xp^{-1}(E)\cap\spt\gamma_p\cap D_S\right)\\
&\le\sum_{S\not\in \mathcal{F}_1}\mu_1\left(\pi^1(\xp^{-1}(E)\cap\spt\gamma_p\cap D_S)\right)+\sum_{S\in \mathcal{F}_1}\mu_1\left(\pi^1(\xp^{-1}(E)\cap\spt\gamma_p\cap D_S)\right)\\
&\le\sum_{S\not\in \mathcal{F}_1}\sum_{m\in\NN}\mu_1\left(\pi^1(\xp^{-1}(E)\cap\spt\gamma_p\cap D_S\cap  U^S_m)\right) +\sum_{S\in \mathcal{F}_1}\mu_1\left(\pi^1(\xp^{-1}(E)\cap\spt\gamma_p\cap D_S)\right),
 \end{align*}
 where for each $S\not\in \mathcal{F}_1$, $\{U^S_m\}$ is the countable cover of $D_S\cap\spt\gamma_p$ defined in Proposition \ref{prop:DScase}.
Notice that the second inequality is due to the marginal constraint $\pi^1_\sharp\gamma_p=\mu_1$. By Lemma~\ref{lem:diameterDS1}, $\LL^d(\pi^1(\xp^{-1}(E)\cap\spt\gamma_p\cap D_S\cap U^S_m))=\mathcal{H}^d(\pi^1(\xp^{-1}(E)\cap\spt\gamma_p\cap D_S \cap U^S_m))=0$ for every $S\not\in \mathcal{F}_1$ and every $m\in \NN$. Lemma~\ref{lem:diameterDS2} then gives $\LL^d(\pi^1(\xp^{-1}(E)\cap\spt\gamma_p\cap D_S))=\mathcal{H}^d(\pi^1(\xp^{-1}(E)\cap\spt\gamma_p\cap D_S))=0$ for every  $S\in\mathcal{F}_1$. We conclude thanks to the absolute continuity of $\mu_1$. 
\end{proof}

\subsection{Proof of \ref{th:sparsity-A} of Theorem \ref{th:sparsityplan}}
    Let $\gammap$ be an optimal coupling for the problem \eqref{MMpbary}, then by Corollary~\ref{cor:optimalitytwomarginals}, the couplings $\gamma_i:=(\pi^i,\xh)_\sharp\overline\gammap$ are optimal for $W_p(\mu_i,\nup)$, $i=1, \dots, N$. As the cost function $c_p$ is continuous (see Corollary \ref{cor: continuity bary}), Proposition \ref{prop:optimalplanarecmonotone} yields that $\gammap$ is $c_p$-monotone. Thus, by Theorem~\ref{th:absolutecontinuitypbig}, $\nup=(\xp)_\sharp\gammap$ is absolutely continuous. The Gangbo--McCann Theorem \cite[Theorem 1.2]{GMC96} then ensures that there exist unique maps $f_i$ such that $\mu_i=(f_i)_\sharp\nup$ and $\gamma_i=(f_i,\id)_\sharp\nup$ for every $i={1,\dots,N}$. This implies that 
\begin{equation*}
\gammap=(f_1,\dots,f_N)_\sharp\overline\nu.
\end{equation*}
Now, as $\mu_1 \ll \LL^d$, by the same result of Gangbo--McCann we know that there exists an optimal map $g_1$ such that $\gamma_1=(\id,g_1)_\sharp\mu_1$, and $g_1$ is the a.e.-inverse of $f_1$. Hence,
\begin{equation*}
 \gammap=(f_1,\dots,f_N)_\sharp(g_1)_\sharp\mu_1=(\id,f_2\circ g_1, \dots, f_N\circ g_1)_\sharp\mu_1.
\end{equation*}
Once the Monge structure of the optimal transport plan has been established, the almost-everywhere uniqueness of the maps $T_i = f_i \circ g_1$ then follows from a standard argument: the convex combination of two different optimal plans is still optimal, but cannot be Monge, which contradicts what we just showed. \hfill $\blacksquare$

\section{Optimality system}
\label{sec:p-partB} 

It is well-known that, as in the two-marginal case, multi-marginal OT admits a dual formulation. The attainment of dual optimizers (Kantorovich potentials) in the space of integrable (w.r.t.\ the marginal measure $\mu_i$) is a classical result \cite{K84}. 
Contrary to Kellerer's proof, which relies on weak compactness in $L^1_{\mu_i}$, we extend the strategy of \cite{AC11} from the case $p=2$ to general $1<p<\infty$ by use of the $p$-transform and its regularizing properties, see Lemma~\ref{lem:equi-Lipschitz} below. This gives us compactness and allows us to choose optimal potentials that are in addition continuous and almost everywhere differentiable on the convex hull of their supports, which will be needed in the proof of \ref{th:sparsity-B} of Theorem \ref{th:sparsityplan}. 

We therefore state the Kantorovich duality specifically for our problem \eqref{MMpbary}. 
For this purpose, we introduce the space \[\mathcal{Y}_p \coloneq (1+|\cdot|^p)\mathcal{C}_b(\RR^d) \coloneq \left\{ \varphi \in \mathcal{C}(\RR^d): (1+|\cdot|^p)^{-1}\varphi \text{ is bounded}\right\}\] of continuous functions of at most $p$-growth and we define the  $\lambda h_p$-conjugate of a function $\varphi:\RR^d \to \RR$ via 
\begin{equation}\label{eq:duality relaxed}
    \varphi^{\lambda, p}(x)\coloneq\inf_{z\in \RRd}\left\{\lambda |x-z|^p - \varphi(z) \right\}=\inf_{z\in \RRd}\left\{\lambda h_p(x-z) - \varphi(z) \right\} \quad \text{for } x\in\RRd.
\end{equation}

\begin{theorem}[MMOT Duality]
    \label{th:duality}
There holds
\begin{equation}\label{Dpbary} \tag{D-$p$-bar}
    C_{p-\text{MM}}=\sup_{\varphi_1,\dots,\varphi_N\in\mathcal{A}(c_p)}\sum_{i=1}^{N}\int_{\RRd}\varphi_id\mu_i,
\end{equation}
where 
\[\mathcal{A}(c_p)\coloneq\{ (\varphi_1,\dots,\varphi_N)\in L_{\mu_1}^1(\RRd)\times \cdots \times L_{\mu_N}^1(\RRd) \, : \, \varphi_1\oplus\cdots\oplus\varphi_N\le c_p\}. \]
Moreover, there exists a maximizer $\Phi=(\widetilde\varphi_1,\dots,\widetilde\varphi_N)\in\mathcal{B}(c_p)$, where
\begin{align*}
    \mathcal{B}(c_p)\coloneq\left\{ (\varphi_1,\dots,\varphi_N) \in\mathcal{A}(c_p) : \, \varphi_i= \psi_i^{\lambda_i, p} \text{ with } \psi_i\in \mathcal{Y}_p \text{ for every } i, \, \text{and} \, \sum_{i=1}^N \psi_i = 0   \right\}.
\end{align*}
In particular, $\widetilde\varphi_i$ is $\LL^d$-a.e. differentiable on the convex hull of the support of $\mu_i$ for every $i=1,\dots,N$. 
\end{theorem}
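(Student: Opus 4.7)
The plan is to derive the duality from the classical MMOT result, upgrade any admissible tuple to one in $\mathcal{B}(c_p)$ via the $p$-transform, and then extract a maximizer by compactness. The pointwise regularity of the optimizer will then come for free from the structure of $p$-transforms.

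Step 1 is immediate: the first equality is the classical Kantorovich duality for MMOT with a continuous cost, see \cite{K84} or \cite[Theorem~3.4]{F24}, applied to $c_p$ (continuous by Corollary~\ref{cor: continuity bary}).

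Step 2, the reduction to $\mathcal{B}(c_p)$: The key structural remark is that, since $c_p(\xvec) = \min_z \sum_i \lambda_i|x_i - z|^p$, the admissibility constraint $\sum_i \varphi_i(x_i) \le c_p(\xvec)$ is equivalent to
\[
    \sum_{i=1}^N \bigl(\varphi_i(x_i) - \lambda_i|x_i - z|^p\bigr) \le 0 \quad \text{for all } \xvec \in \RR^{Nd}, \ z \in \RR^d.
\]
Given any $(\varphi_i) \in \mathcal{A}(c_p)$, I would define $\psi_i(z) := \inf_{x_i \in \RR^d}\bigl(\lambda_i|x_i - z|^p - \varphi_i(x_i)\bigr)$, so that double conjugation yields $\psi_i^{\lambda_i, p} \ge \varphi_i$ pointwise and hence $\int \psi_i^{\lambda_i, p}\,\dd\mu_i \ge \int \varphi_i\,\dd\mu_i$. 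Exploiting the equivalent form of admissibility above at (near-)optima of the defining infima of the $\psi_i$ yields $\sum_i \psi_i(z) \ge 0$ for every $z$. Setting $s := \sum_i \psi_i \ge 0$ and replacing $\psi_1$ by $\psi_1 - s$ then enforces $\sum \psi_i = 0$ and, since $s \ge 0$, only \emph{increases} the $p$-transform $\psi_1^{\lambda_1, p}$, hence the dual value. The required $p$-growth $|\psi_i(z)| \lesssim 1 + |z|^p$ follows by testing with $x_i = 0$ (upper bound) and $x_i = z$ (lower bound via $\varphi_i$), while continuity of the $\psi_i$ is inherited from an infimum of continuous families, possibly after a preliminary lower-semicontinuous regularization of $\varphi_i$.

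Step 3, compactness and attainment: I would take a maximizing sequence $(\psi_i^{(n)}) \subset \mathcal{B}(c_p)$; since constant shifts $\psi_i \mapsto \psi_i + a_i$ with $\sum a_i = 0$ preserve both the constraint and the functional, one may normalize $\psi_i^{(n)}(0) = 0$. Lemma~\ref{lem:equi-Lipschitz} then produces equi-Lipschitz bounds, with uniform $p$-growth, for the $p$-transforms $(\psi_i^{(n)})^{\lambda_i, p}$; Arzelà--Ascoli and a diagonal extraction yield a locally uniformly convergent subsequence. Passage to the limit in the dual objective is justified by dominated convergence using these uniform bounds and $\mu_i \in \mathcal{P}_p(\RR^d)$, giving an optimizer of the form $\widetilde\varphi_i = (\psi_i^{(\infty)})^{\lambda_i, p}$. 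For Step 4 (a.e.\ differentiability): the $p$-growth of $\psi_i$ confines, for $x_i$ in any bounded set, the near-minimizers $z$ in $\widetilde\varphi_i(x_i) = \inf_z\bigl(\lambda_i|x_i - z|^p - \psi_i(z)\bigr)$ to a bounded set, on which the family $x_i \mapsto \lambda_i|x_i - z|^p$ is equi-Lipschitz; hence $\widetilde\varphi_i$ is locally Lipschitz on $\RR^d$, and Rademacher's theorem gives $\LL^d$-a.e.\ differentiability, in particular on the convex hull of $\spt\mu_i$.

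The main obstacle is Step 2: the normalization needed to enter $\mathcal{B}(c_p)$ must simultaneously enforce $\sum \psi_i = 0$, preserve $p$-growth and continuity, and not decrease the dual value. The whole manoeuvre hinges on the nonnegativity $\sum_i \psi_i \ge 0$, which in turn comes from admissibility of $(\varphi_i)$ and the separability of the rewritten constraint; without it, the shift $\psi_1 \mapsto \psi_1 - s$ could spoil optimality. Once this is in place, Step 3 is a fairly routine application of the equi-Lipschitz estimate of Lemma~\ref{lem:equi-Lipschitz} combined with Arzelà--Ascoli.
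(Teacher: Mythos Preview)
Your strategy matches the paper's almost exactly: cite abstract duality, upgrade competitors to $\mathcal{B}(c_p)$ via $p$-transforms with $\sum_i\psi_i=0$, and pass to the limit using Lemma~\ref{lem:equi-Lipschitz}. Your Step~2 normalization (subtract $s=\sum_i\psi_i\ge 0$ from $\psi_1$) is in fact equivalent to the paper's choice $\psi_N:=-\sum_{i<N}\psi_i$. However, two of your technical steps do not go through as written.

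\textbf{Step 3 (compactness and passage to the limit).} You apply Arzel\`a--Ascoli to the transforms $\widetilde\varphi_i^{(n)}=(\psi_i^{(n)})^{\lambda_i,p}$ and then invoke dominated convergence. Both points fail. First, Lemma~\ref{lem:equi-Lipschitz} needs a uniform two-sided bound on $\widetilde\varphi_i^{(n)}$ over $B_{2R}$; the upper bound $\widetilde\varphi_i^{(n)}\le\lambda_i|\cdot|^p$ is free from $\psi_i^{(n)}(0)=0$, but no uniform lower bound is available on all of $\RR^d$ (indeed $\widetilde\varphi_i$ can equal $-\infty$ off the convex hull of $\spt\mu_i$). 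The paper instead applies Lemma~\ref{lem:equi-Lipschitz} to the $\psi_i^{(n)}$ themselves (which \emph{are} $p$-transforms for $i<N$, and whose two-sided growth bound follows from the integration trick plus $\sum_i\psi_i=0$), extracts a locally uniform limit $\psi_i$, and only then sets $\widetilde\varphi_i:=\psi_i^{\lambda_i,p}$. Second, since one has only the one-sided domination $\widetilde\varphi_i^{(n)}\le\lambda_i|\cdot|^p\in L^1_{\mu_i}$ and only the inequality $\widetilde\varphi_i\ge\limsup_n(\psi_i^{(n)})^{\lambda_i,p}$ (not pointwise convergence), the limit passage must use Fatou, not dominated convergence. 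Your Step~2 growth-bound argument is also mis-stated: testing $x_i=z$ in an infimum gives an \emph{upper} bound; the lower bound on $\psi_i$ comes from $\sum_i\psi_i=0$ together with the upper bounds on the others.

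\textbf{Step 4 (a.e.\ differentiability).} Your claim that the $p$-growth of $\psi_i$ confines near-minimizers of $z\mapsto\lambda_i|x_i-z|^p-\psi_i(z)$ to a bounded set is precisely the content of Lemma~\ref{lem:equi-Lipschitz}, but that lemma \emph{assumes} the local boundedness of $\widetilde\varphi_i$ that you are trying to deduce. If $\psi_i(z)$ grows like $\lambda_i|z|^p$ (which the bounds allow), then $\lambda_i|x_i-z|^p-\psi_i(z)$ is not coercive in $z$ and the minimizers need not be confined; in fact $\widetilde\varphi_i$ may be $-\infty$ away from $\spt\mu_i$. The paper closes this by first observing that $\widetilde\varphi_i>-\infty$ on $\spt\mu_i$ (from $\int\widetilde\varphi_i\,\dd\mu_i>-\infty$), then invoking \cite[Proposition~C.3]{GMC96} to get local boundedness on the \emph{interior of the convex hull} of $\spt\mu_i$, and only then applying Lemma~\ref{lem:equi-Lipschitz} and Rademacher there. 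Your stronger claim of local Lipschitzness on all of $\RR^d$ is not justified and is not needed for the statement.
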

Existence of a maximizer $\Phi$ in \eqref{Dpbary} relies on the following quantitative version of a classical regularity result on the $\lambda h_p$-conjugate of a function based on the convexity of $h_p$ (see \cite[Corollary C.5]{GMC96}): 
\begin{lemma}\label{lem:equi-Lipschitz}
    Let $1<p<\infty$ and $\mathcal{F}$ be a family of functions $\varphi:\RR^d \to \RR$ such that the $\lambda h_p$-conjugates $\{\varphi^{\lambda, p}\}_{\varphi\in\mathcal{F}}$ satisfy, for any $R>0$,
    \begin{align}\label{eq:c-transform-bdd}
        \sup_{B_{2R}}|\varphi^{\lambda,p}| \leq M_R ,
    \end{align}
    with a constant $M_R$ depending only on $d,\lambda,p$, and $R$. Then the family $\{\varphi^{\lambda,p}\}_{\varphi\in\mathcal{F}}$ is equi-Lipschitz on $B_{R}$.
\end{lemma}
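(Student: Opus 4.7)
The plan is to show that if $\varphi^{\lambda,p}$ is uniformly bounded on $B_{2R}$, then the near-minimizers in the infimum \eqref{eq:duality relaxed} defining $\varphi^{\lambda,p}(y)$ for $y\in B_R$ are confined to a ball of radius depending only on $d,\lambda,p,R,M_R$. Once this confinement is established, the equi-Lipschitz estimate follows immediately from the local Lipschitz constant of the smooth map $z\mapsto\lambda|\cdot-z|^p$ on that ball.

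\emph{Confinement of near-minimizers.} Fix $\varphi\in\mathcal{F}$ and $y\in B_R$. For $\epsilon\in(0,1]$ pick $z_\epsilon\in\RRd$ with
\[
\lambda|y-z_\epsilon|^p-\varphi(z_\epsilon)\leq\varphi^{\lambda,p}(y)+\epsilon.
\]
If $t\coloneq|z_\epsilon-y|\leq R$ there is nothing to do, so assume $t\geq R$ and set $x_0\coloneq y+R(z_\epsilon-y)/t$; then $x_0\in B_{2R}$ and $|x_0-z_\epsilon|=t-R$. Testing the infimum defining $\varphi^{\lambda,p}(x_0)$ with the competitor $z_\epsilon$ and subtracting the previous inequality yields
\[
\varphi^{\lambda,p}(x_0)-\varphi^{\lambda,p}(y)-\epsilon\leq\lambda\bigl((t-R)^p-t^p\bigr).
\]
Using the two-sided bound $|\varphi^{\lambda,p}|\leq M_R$ on $B_{2R}$ on the left and the elementary estimate $t^p-(t-R)^p\geq pR(t-R)^{p-1}$ (mean value theorem applied to $s\mapsto s^p$, valid for $p>1$) on the right, we obtain
\[
pR(t-R)^{p-1}\leq\frac{2M_R+1}{\lambda},
\]
whence $t\leq R+\bigl((2M_R+1)/(\lambda pR)\bigr)^{1/(p-1)}$; call this quantity $\rho(R)$. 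In particular $|z_\epsilon|\leq R+\rho(R)$.

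\emph{Lipschitz estimate.} Given $x,y\in B_R$, pick $z_\epsilon$ associated with $y$ as above. Since $z_\epsilon$ is admissible in the infimum defining $\varphi^{\lambda,p}(x)$,
\[
\varphi^{\lambda,p}(x)-\varphi^{\lambda,p}(y)\leq\lambda|x-z_\epsilon|^p-\lambda|y-z_\epsilon|^p+\epsilon.
\]
Since $x,y\in B_R$ and $|z_\epsilon|\leq R+\rho(R)$, both $|x-z_\epsilon|$ and $|y-z_\epsilon|$ lie in the closed ball of radius $2R+\rho(R)$, on which $w\mapsto\lambda|w|^p$ is Lipschitz with constant $\lambda p(2R+\rho(R))^{p-1}$. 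Letting $\epsilon\to 0$ and swapping $x$ and $y$ yields the equi-Lipschitz bound with constant $L_R\coloneq\lambda p(2R+\rho(R))^{p-1}$, which depends only on $d,\lambda,p,R,M_R$.

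The main obstacle is the confinement step: the infimum in \eqref{eq:duality relaxed} ranges over all of $\RRd$, so a priori the near-minimizers could escape to infinity. The key observation is that the superlinear growth of $|\cdot|^p$ for $p>1$, combined with a two-sided bound on $\varphi^{\lambda,p}$ over the slightly \emph{larger} ball $B_{2R}\supsetneq B_R$, forces the near-minimizers associated with any point in $B_R$ to stay at controlled distance. The resulting Lipschitz constant deteriorates as $p\downarrow 1$ (the exponent $1/(p-1)$ diverges), which is consistent with the known loss of regularity of the $W_1$ problem and explains the scope restriction $1<p<\infty$ throughout the paper.
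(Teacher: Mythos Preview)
Your proof is correct and follows essentially the same approach as the paper's: both arguments first confine the (near-)minimizers of the infimum defining $\varphi^{\lambda,p}(y)$ to a fixed ball by moving from $y$ a fixed distance along the segment towards the near-minimizer, landing in $B_{2R}$, and comparing the two values of $\varphi^{\lambda,p}$; then both deduce the equi-Lipschitz bound from the local Lipschitz constant of $w\mapsto\lambda|w|^p$ on that ball. The only cosmetic differences are that the paper moves by $R/2$ rather than $R$ and phrases the one-variable lower bound $t^p-(t-R)^p\geq pR(t-R)^{p-1}$ via the convexity inequality $h_p(a)-h_p(b)\geq Dh_p(b)\cdot(a-b)$ instead of the mean value theorem, leading to slightly different (but equivalent) explicit constants.
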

\begin{remark}
 For $p=2$, the equi-Lipschitzianity of the family $\{\varphi^{\lambda, 2}\}_{\varphi\in\mathcal{F}}$ in Lemma~\ref{lem:equi-Lipschitz} follows more easily from the observation that the functions $\lambda |\cdot|^2 - \varphi^{\lambda,2}$ are convex, as used in \cite{AC11}. Indeed, 
    \begin{align*}
        \lambda |z|^2 - \varphi^{\lambda,2}(z) = \sup_{x\in\RR^d} \left\{ 2\lambda x \cdot z - \lambda |x|^2 + \varphi(z) \right\},
    \end{align*}
    where the right hand side is convex as the supremum of linear (in $z$) functions indexed by $x\in\RR^d$. The equi-Lipschitzianity then follows from the fact that a family of convex functions on $B_{2R}$ is equi-Lipschitz on $B_{R}$, see e.g.\  \cite[Theorem 6.7]{EG15}.
\end{remark}
Before giving the proof of Lemma~\ref{lem:equi-Lipschitz} at the end of this section, let us show how it implies the duality Theorem~\ref{th:duality}. 
\begin{proof}[Proof of Theorem~\ref{th:duality}]
    Since $\mu_1,\dots, \mu_N\in \mathcal{P}_p(\RRd)$, we have that $C_{p-\text{MM}}<+\infty$ (see Footnote \ref{foot:finitecost}). The equivalence \eqref{Dpbary} of the two problems is standard, see e.g.\ \cite[Theorem 2.21]{K84} or \cite[Theorem 3.4]{F24}.

For the existence of a maximizer we will show that any maximizing sequence  $(\Phi^k)_{k\in\NN}=(\varphi^k_1,\dots,\varphi^k_N)_{k\in \NN}\subset\mathcal{A}(c_p)$ can be chosen in such a way that it lies in $\mathcal{B}(c_p)$.
Next, we establish the uniform bound \eqref{eq:c-transform-bdd} for $\{\Psi^k = (\psi_1^k, \dots, \psi_N^k)\}_{k\in\NN}$ and apply Lemma~\ref{lem:equi-Lipschitz} to obtain compactness of the sequence $\{\Psi^k\}_{k\in\NN}$ by the Arzelà-Ascoli Theorem. The result then follows from a simple argument based on Fatou's Lemma. 

\begin{enumerate}[label=\textbf{Step \arabic*.},leftmargin=0pt,labelsep=*,itemindent=*,itemsep=10pt,topsep=10pt]
\item \label{step:1} Let us consider an $N$-tuple $(\varphi_1,\dots,\varphi_N)\in\mathcal{A}(c_p)$. 
Note that since the cost function $c_p$ is non-negative, the function $(0, \dots, 0)$ is a competitor for the problem \eqref{eq:duality relaxed}, so that we can assume without loss of generality that \begin{equation}\label{eq:sum integrals nonnegative}
    \sum_{i=1}^N\int_{\RRd}\varphi_i d\mu_i\ge 0.
\end{equation}
We show that there exists
 $(\widetilde\varphi_1,\dots,\widetilde\varphi_N)\in \mathcal{B}(c_p)$, such that 
 \[\varphi_1\oplus\cdots\oplus\varphi_N\le \widetilde\varphi_1\oplus\cdots\oplus\widetilde\varphi_N\le c_p.\]
Define the functions 
\begin{align*}
     \psi_i\coloneq \varphi_i^{\lambda_i,p} \quad \text{for} \quad i=1,\dots,N-1, \quad \text{and} \quad 
     \psi_N\coloneq-\sum_{i=1}^{N-1} \psi_i,
\end{align*}
and set  
\begin{equation}
     \label{eq:varphi i}\widetilde\varphi_i\coloneq \psi_i^{\lambda_i, p}, \quad i=1, \dots, N.
 \end{equation}
 For any $(\xdots)\in\RR^{Nd}$, using that $\sum_{i=1}^N \psi_i = 0$, the definition \eqref{eq:varphi i} of the functions $\widetilde\varphi_i$ implies that
 \begin{equation*}
      \sum_{i=1}^N\widetilde\varphi_i(x_i)=\sum_{i=1}^N\widetilde\varphi_i(x_i)+\sum_{i=1}^N \psi_i(z)\le \sum_{i=1}^N\lambda_i |x_i-z|^p, \ \text{for every }z\in\RRd, 
 \end{equation*}
hence
 \begin{equation*}
      \sum_{i=1}^N\widetilde\varphi_i(x_i)\le \inf_{z\in\RRd}\sum_{i=1}^N\lambda_i |x_i-z|^p=c_p(\xdots).
 \end{equation*}
Moreover for every $i=1,\dots,N-1$, $\widetilde\varphi_i=(\varphi_i^{\lambda_i, p})^{\lambda_i, p}$ and thus $\widetilde\varphi_i\ge \varphi$. When $i=N$, we have
\begin{align*}
     \widetilde\varphi_N(x_N)&=\inf_{z\in \RRd}\left\{\lambda_N |x_N-z|^p+\sum_{i=1}^{N-1}\psi_i(z)\right\}\\
     &=\inf_{z\in \RRd}\left\{\lambda_N |x_N-z|^p+ \sum _{i=1}^{N-1}\inf_{x_i\in\RRd}\left\{\lambda_i|x_i-z|^p-\varphi_i(x_i)\right\} \right\}\\
     &=\inf_{z\in \RRd}\, \inf_{x_1,\dots,x_{N-1}\in\RR^{(N-1)d}} \left\{\lambda_N |x_N-z|^p+ \sum _{i=1}^{N-1}\left(\lambda_i|x_i-z|^p-\varphi_i(x_i)\right) \right\}\\
     &=\inf_{x_1,\dots,x_{N-1}\in\RR^{(N-1)d}} \, \inf_{z\in \RRd} \left\{\sum _{i=1}^{N}\lambda_i|x_i-z|^p-\sum _{i=1}^{N-1}\varphi_i(x_i) \right\}\\
     &=\inf_{x_1,\dots,x_{N-1}\in\RR^{(N-1)d}} \left\{c_p(x_1,\dots,x_N)-\sum _{i=1}^{N-1}\varphi_i(x_i)\right\}
     \ge \varphi_N(x_N),
 \end{align*}
where the equality in the last line follows by the definition of $c_p$ and the final inequality by the duality constraint $\varphi_1\oplus\cdots\oplus\varphi_N\le c_p$ in $\mathcal{A}(c_p)$ .
Thus $\varphi_1\oplus\cdots\oplus\varphi_N\le\widetilde\varphi_1\oplus\cdots\oplus\widetilde\varphi_N\le c_p$.

\medskip
By subtracting $\psi_i(0)$ from $\psi_i$, we may assume that $\psi_i(0)=0$ for every $i=1,\dots,N$. Indeed, since $(\psi_i-\alpha)^{\lambda_i, p}=\psi_i^{\lambda_ip}+\alpha=\widetilde\varphi_i +\alpha$, we have $\widetilde\varphi_i=(\psi_i)^{\lambda_i, p}$ and, using that $\sum_i \psi_i(0)=0$, there still holds $\varphi_1\oplus\cdots\oplus\varphi_N\le\widetilde\varphi_1\oplus\cdots\oplus\widetilde\varphi_N=\widetilde\varphi_1+\psi_1(0)\oplus\cdots\oplus\widetilde\varphi_N + \psi_N(0)\le c_p$. 

Evaluating $\lambda_i|x_i-z|^p-\psi_i(z)$ in $z=0$, we therefore get
\begin{align}\label{eq:upperboundpotential1}
    \widetilde\varphi_i(x_i)\le\lambda _i |x_i|^p,  \quad \text
{for every } x_i\in\RRd, i=1,\dots,N. 
\end{align}
 Moreover, we observe that by \eqref{eq:upperboundpotential1}, using that $\mu_i \in \mathcal{P}_p(\RR^d)$,
 \begin{equation*}
    \sum_{i=1}^N\int _{\RRd}\widetilde\varphi_i \,\dd\mu_i  \le \sum_{i=1}^N \int_{\RR^d} \lambda_i|x_i|^p \,\dd\mu_i(x_i)=\sum_{i=1}^{N} \lambda_i K_i<+\infty,
 \end{equation*}
 where $K_i\coloneq|||\cdot|^p||_{L^1_{\mu_i}(\RRd)}$. Consequently, by \eqref{eq:sum integrals nonnegative} it follows that
 \begin{equation}\label{eq:lowerboundintegral}
   \int _{\RRd}\widetilde\varphi_i \,\dd\mu_i
   \ge -   \sum_{j\neq i}\int _{\RRd}\widetilde\varphi_j \,\dd\mu_j
   \ge -\sum_{j\neq i} \lambda_j K_j.
 \end{equation}
By integrating the inequality
 $\psi_i(z)\le \lambda_i |x_i-z|^p-\widetilde\varphi_i (x_i)$, which follows from the definition \eqref{eq:varphi i} of $\widetilde\varphi_i$, 
with respect to $\mathrm{d}\mu_i(x_i)$, 
 we obtain 
\begin{align}\label{eq:upperboundpotential}
    \notag \psi_i(z)&\le\int _{\RRd}\lambda_i |x_i-z|^p \,\dd\mu_i(x_i)-\int _{\RRd}\widetilde\varphi_i \,\dd\mu_i\\
    & \le \max\{1, 2^{p-1}\} \sum_{i=1}^N  \lambda_i K_i + 2^{p-1} \lambda_i |z|^p
    \leq C (1+ |z|^p), \quad \text{for every }z\in\RRd.
\end{align}
Using that $\sum_{i=0}^N \psi_i = 0$, we may also bound 
\begin{align*}
    \psi_i(z) = -\sum_{j\neq i} \psi_j(z) \stackrel{\eqref{eq:upperboundpotential}}{\geq} -\sum_{j\neq i} C (1+ |z|^p) \geq \widetilde{C} (1+|z|^p),
\end{align*}
hence there exists a constant $M$ depending only on $d, N, \lambda_i, K_i$ such that   
\begin{align}\label{eq:bound-psi}
    |\psi_i(z)|\leq M (1+|z|^p) \quad \text{for all } z\in\RR^d. 
\end{align}
Lemma~\ref{lem:equi-Lipschitz} then implies that the functions $\psi_i$ are continuous by for $i=1, \dots, N-1$, and therefore also $\psi_N = -\sum_{i=1}^{N-1}\psi_i$, which yields the claim $(\widetilde\varphi_1,\dots,\widetilde\varphi_N)\in \mathcal{B}(c_p)$.
 
\item \label{step:2} Let $(\widetilde{\Phi}^k)_{k\in\NN}\subset \mathcal{B}(c_p)$ be a maximizing sequence as constructed in \ref{step:1} Then the sequence $(\Psi^k)_{k\in\NN}$ lies in a precompact subset of $\mathcal{C}(\RR^d)^N$ by the Arzelà-Ascoli Theorem. Indeed, by Lemma~\ref{lem:equi-Lipschitz} and the bound \eqref{eq:bound-psi}, the sequences $\{\psi_i^k\}_{k\in\NN}$ are equi-Lipschitz on any ball $B_R(0)$, $R>0$, for $i=1, \dots, N-1$, and therefore also $\left\{\psi_N^k = -\sum_{i=1}^{N-1}\psi_i^k\right\}_{k\in\NN}$. We may hence extract a subsequence $\{\Psi^{n_k}\}_{k\in\NN}$ that converges uniformly on compact subsets to some $\Psi = (\psi_1, \dots, \psi_N)$ with $\psi_i \in \mathcal{Y}_p$ for any $i=1, \dots, N$. 

Define $\widetilde{\Phi} = (\widetilde{\varphi}_1, \dots, \widetilde{\varphi}_N)$ via $\widetilde{\varphi}_i \coloneq (\psi_i)^{\lambda_i, p}$ for $i=1, \dots, N$. 
By \eqref{eq:upperboundpotential1}, we know that $\widetilde\varphi_i(x_i)<+\infty$ for every $x_i$ and by \eqref{eq:lowerboundintegral} that $\widetilde\varphi_i(x_i)>-\infty$ for every $x_i\in\spt\mu_i$.  Being the $\lambda_ih_p$-conjugate of some function, by \cite[Proposition C.3]{GMC96} the function $\widetilde\varphi_i$ is locally bounded in the interior of the convex hull of $\spt\mu_i$. 
Appealing again to Lemma~\ref{lem:equi-Lipschitz}, it follows that the functions $\widetilde{\varphi}_i$ are locally Lipschitz continuous in the interior of the convex hull of $\spt\mu_i$, in particular differrentiable at $\LL^d$-almost every $x_i$ in the interior of the convex hull of $\spt\mu_i$ by Rademacher's Theorem.

\item \label{step:3} 
Note that for $x_i \in \RR^d$,
\begin{align}\label{eq:p-trafo-lim}
\begin{split}
    \widetilde{\varphi}_i(x_i)
    &= \inf_{z\in\RR^d} \left\{\lambda_i |x_i-z|^p - \psi_i(z) \right\}
    = \inf_{z\in\RR^d} \lim_{k\to\infty} \left\{\lambda_i |x_i-z|^p - \psi_i^{n_k}(z) \right\} \\
    &\geq \limsup_{k\to\infty} \inf_{z\in\RR^d} \left\{\lambda_i |x_i-z|^p - \psi_i^{n_k}(z) \right\}
    = \limsup_{k\to\infty} \left(\psi_i^{n_k}\right)^{\lambda_i, p}(x_i),
\end{split}
\end{align}
which implies with Fatou's Lemma (recall that $\int_{\RR^d} \varphi_i^n \,\mathrm{d}\mu_i \leq \lambda_i K_i < \infty$ for all $n\in\NN$) that 
\begin{align*}
    \sup_{\Phi \in \mathcal{A}(c_p)} \sum_{i=1}^N \int_{\RR^d} \varphi_i \,\mathrm{d}\mu_i 
    &= \sup_{\widetilde{\Phi} \in \mathcal{B}(c_p)} \sum_{i=1}^N \int_{\RR^d} \widetilde{\varphi}_i \,\mathrm{d}\mu_i 
    \geq \sum_{i=1}^N \int_{\RR^d} (\psi_i)^{\lambda_i,p} \,\mathrm{d}\mu_i \\
    &\stackrel{\eqref{eq:p-trafo-lim}}{\geq} \sum_{i=1}^N \int_{\RR^d} \limsup_{k\to\infty} (\psi_i^{n_k})^{\lambda_i,p} \,\mathrm{d}\mu_i 
    \geq \limsup_{k\to\infty} \sum_{i=1}^N \int_{\RR^d} (\psi_i^{n_k})^{\lambda_i,p} \,\mathrm{d}\mu_i  \\
    &= \limsup_{k\to\infty} \sum_{i=1}^N \int_{\RR^d} \widetilde{\varphi}^{n_k}_i\,\mathrm{d}\mu_i
    = \sup_{\widetilde{\Phi} \in \mathcal{B}(c_p)} \sum_{i=1}^N \int_{\RR^d} \widetilde{\varphi}\,\mathrm{d}\mu_i.
\end{align*}
It follows that $\widetilde{\Phi}\in\mathcal{B}(c_p)$ is a maximizer for \eqref{Dpbary}.
\qedhere
\end{enumerate}
\end{proof}

\begin{corollary}\label{cor: coupled two marg pot}
  For every $i=1,\dots,N$, the functions $(\widetilde\varphi_i, \psi_i)$, defined as in Theorem~\ref{th:duality}, are optimal potentials for $\lambda_iW_p(\mu_i,\nu_p)$, where $\nu_p=\mathrm{bar}_p((\mu_i,\lambda_i)_{i=1,\dots,N})$.
  \begin{proof}
We recall that by construction, for every $i=1,\dots,N$, $\widetilde\varphi_i$  is the $\lambda h_p$-conjugate of  $\psi_i$, and thus
    \begin{equation}\label{eq:ineq}
        \widetilde\varphi_i(x_i)+ \psi_i(z)\le \lambda_i |x_i-z|^p \quad \text{for every} \ (x_i,z) \in \RR^{2d}
    \end{equation}
    and that $\sum_{i=1}^N \psi_i(z)=0$, for every $z\in\RRd$.\\
    If $\gamma_p$ is the optimal plan for \eqref{MMpbary},  $\widetilde\varphi_1,\dots,\widetilde\varphi_N$ are the Kantorovich potentials in \eqref{Dpbary}, if and only if the nonnegative function $c_p-\widetilde\varphi_1\oplus\cdots\oplus\widetilde\varphi_N$ is equal $0$ on $\spt \gammap$. It follows that for every $\xvec \in\spt\gamma_p$,
\begin{align}\label{eq:equality on supp}
\sum_{i=1}^N \widetilde\varphi_i(x_i)+\sum_{i=1}^N \psi_i(\xp(\xvec))=\sum_{i=1}^N \widetilde\varphi_i(x_i)=\sum_{i=1}^N\lambda_i|x_i-\xp(\xvec)|^p.
\end{align}
By Proposition~\ref{prop:equivalence} there holds $\nu_p = {\xp}_\sharp\gammap$, and by Corollary~\ref{cor:optimalitytwomarginals} we know that the unique optimal plan $\gamma_i\in\Pi(\mu_i,\nu_p)$ for $\lambda_iW_p(\mu_i,\nu_p)$ is given by $\gamma_i=(\pi_i,\xp)_\sharp \gamma_p$. This implies that  $(x_i,z)\in\spt\gamma_i$ iff $z=\xp(\xvec)$, where the points $x_1,\dots, x_{i-1},x_{i+1},\dots, x_N$ are such that $\xvec=(x_1,\dots,x_N)\in\spt\gamma_p$. Thus, thanks to \eqref{eq:equality on supp}, inequality \eqref{eq:ineq} is actually an equality on the support of $\gamma_i$.
  \end{proof}
\end{corollary}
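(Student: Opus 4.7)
The plan is to check the two defining properties of an optimal Kantorovich dual pair for the two-marginal problem with cost $\lambda_i|x_i-z|^p$ between $\mu_i$ and $\nu_p$: the admissibility inequality $\widetilde\varphi_i(x_i)+\psi_i(z)\le \lambda_i|x_i-z|^p$ on all of $\RR^{2d}$, and equality on the support of some optimal coupling. Admissibility is immediate from the construction in Theorem~\ref{th:duality}: since $\widetilde\varphi_i = \psi_i^{\lambda_i,p}$ by \eqref{eq:duality relaxed}, one has $\widetilde\varphi_i(x_i) \le \lambda_i|x_i-z|^p - \psi_i(z)$ for every $z\in\RRd$, which is exactly the required constraint.

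For the tightness on the support, I would combine the multi-marginal dual equality on $\spt\gammap$ with the normalization $\sum_{i=1}^N\psi_i \equiv 0$. Since $(\widetilde\varphi_1,\dots,\widetilde\varphi_N)$ is optimal in \eqref{Dpbary} and $\gammap$ is optimal in \eqref{MMpbary}, the nonnegative function $c_p - \widetilde\varphi_1\oplus\cdots\oplus\widetilde\varphi_N$ vanishes on $\spt\gammap$. Using $c_p(\xvec) = \sum_i \lambda_i|x_i-\xp(\xvec)|^p$ and adding $0 = \sum_i\psi_i(\xp(\xvec))$ to the left, this becomes
\[
\sum_{i=1}^N\bigl[\widetilde\varphi_i(x_i)+\psi_i(\xp(\xvec))\bigr] = \sum_{i=1}^N \lambda_i|x_i-\xp(\xvec)|^p \quad \text{on } \spt\gammap.
\]
Each summand on the left is pointwise bounded by the corresponding summand on the right by admissibility, so equality of the sums forces each individual identity $\widetilde\varphi_i(x_i)+\psi_i(\xp(\xvec)) = \lambda_i|x_i-\xp(\xvec)|^p$ for every $i=1,\dots,N$.

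To conclude, I would invoke Corollary~\ref{cor:optimalitytwomarginals}, which identifies $\gamma_i \coloneq (\pi_i,\xp)_\sharp\gammap$ as an optimal coupling in $\Pi(\mu_i,\nu_p)$ for the transport cost with density $\lambda_i|x_i-z|^p$ (recall $\nu_p = (\xp)_\sharp\gammap$ by Proposition~\ref{prop:equivalence}). Every $(x_i,z)\in\spt\gamma_i$ arises as $(x_i,\xp(\xvec))$ for some $\xvec\in\spt\gammap$, so the pointwise identity above transfers to $\widetilde\varphi_i(x_i)+\psi_i(z) = \lambda_i|x_i-z|^p$ on $\spt\gamma_i$. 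Together with admissibility, this certifies $(\widetilde\varphi_i,\psi_i)$ as an optimal Kantorovich pair for $\lambda_i W_p(\mu_i,\nu_p)$.

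The only delicate point I anticipate is the term-by-term equality in the second paragraph, but this is essentially automatic once the normalization $\sum_i\psi_i=0$ is inserted into the multi-marginal duality identity: no further analytic work beyond the pointwise definition of the $\lambda_i h_p$-conjugate is needed. Everything else is bookkeeping built on Theorem~\ref{th:duality} and Corollary~\ref{cor:optimalitytwomarginals}.
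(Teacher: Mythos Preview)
Your proposal is correct and follows essentially the same route as the paper's proof: both establish admissibility from $\widetilde\varphi_i=\psi_i^{\lambda_i,p}$, use the normalization $\sum_i\psi_i\equiv 0$ together with the multi-marginal duality identity on $\spt\gammap$ to obtain equality at points $(x_i,\xp(\xvec))$, and then transfer this to $\spt\gamma_i$ via Corollary~\ref{cor:optimalitytwomarginals}. If anything, you are slightly more explicit than the paper about the term-by-term equality step (sum of inequalities equals sum of right-hand sides forces each inequality to be an equality), which the paper leaves implicit.
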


\begin{proposition}\label{prop:firstordoptsystem}
Let us assume that $\mu_1,\dots,\mu_N \ll \LL^d$.  Then, given the optimal plan $\gammap$ for \eqref{eq:MMhbary}, the first order optimality system
 \begin{equation}\label{eq:firstordoptsystem}
     \left\{\begin{aligned}
      \lambda_1Dh_p(x_1-\xp(\xvec))&=D\widetilde\varphi_1(x_1),\\
      &\,\,\,\vdots\\
      \lambda_NDh_p(x_N-\xp(\xvec))&=D\widetilde\varphi_N(x_N),\\
     \end{aligned}\right.
 \end{equation}
 holds for $\gammap$-a.e. $\xvec$,
 where $\widetilde\varphi_1,\dots,\widetilde\varphi_N$ are the Kantorovich potentials from Theorem~\ref{th:duality}.
\end{proposition}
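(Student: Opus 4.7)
The plan is to deduce the system \eqref{eq:firstordoptsystem} pointwise on $\spt\gammap$ by combining the Kantorovich duality equality with the envelope bound $c_p(\xvec)\le\sum_{j}\lambda_j h_p(x_j - z)$ (valid for every $z\in\RRd$, with equality at $z=\xp(\xvec)$). The key enabler is the a.e.\ differentiability of the $\widetilde\varphi_i$ established in Theorem~\ref{th:duality}, together with the fact that $h_p\in\mathcal{C}^1(\RRd)$ for every $p>1$. No additional regularity of $c_p$ or of the barycenter map $\xp$ is needed.

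First I would isolate the $\gammap$-null set on which differentiability of the potentials may fail. By Theorem~\ref{th:duality}, each $\widetilde\varphi_i$ is differentiable at $\LL^d$-a.e.\ point of the interior of the convex hull of $\spt\mu_i$. Since $\mu_i\ll \LL^d$, the exceptional set $E_i\subset\RRd$ satisfies $\mu_i(E_i)=0$, and the marginal condition $\pi^i_\sharp\gammap=\mu_i$ then yields $\gammap(\pi_i^{-1}(E_i))=0$. Taking the union over $i=1,\dots,N$, we conclude that for $\gammap$-a.e.\ $\xvec=(\xdots)$ the function $\widetilde\varphi_i$ is differentiable at $x_i$ for every $i$.

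Next, fix such a point $\xvec\in\spt\gammap$ and fix $i\in\{1,\dots,N\}$. Using the admissibility constraint $\widetilde\varphi_1\oplus\cdots\oplus\widetilde\varphi_N\le c_p$ together with the saturation identity \eqref{eq:equality on supp} on $\spt\gammap$, for every $y\in\RRd$ there holds
\[
c_p(x_1,\dots,x_{i-1},y,x_{i+1},\dots,x_N)-\widetilde\varphi_i(y) \;\ge\; c_p(\xvec)-\widetilde\varphi_i(x_i).
\]
Now apply the envelope inequality $c_p(\cdot)\le \sum_{j}\lambda_j h_p(\cdot-\xp(\xvec))$ to the left-hand side and the equality $c_p(\xvec)=\sum_j \lambda_j h_p(x_j-\xp(\xvec))$ to the right-hand side; after cancelling the $j\neq i$ contributions (which do not depend on $y$), we obtain
\[
\lambda_i h_p(y-\xp(\xvec))-\widetilde\varphi_i(y) \;\ge\; \lambda_i h_p(x_i-\xp(\xvec))-\widetilde\varphi_i(x_i).
\]
Hence the map $y\mapsto \lambda_i h_p(y-\xp(\xvec))-\widetilde\varphi_i(y)$ attains a minimum at $y=x_i$.

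Finally, since $h_p\in\mathcal{C}^1(\RRd)$ for every $p>1$ (recall that $Dh_p(z)=p|z|^{p-2}z$ extends continuously by $Dh_p(0)=0$ because $p-1>0$) and $\widetilde\varphi_i$ is differentiable at $x_i$, the classical first-order optimality condition at the interior minimum $y=x_i$ yields exactly
\[
\lambda_i Dh_p(x_i-\xp(\xvec))=D\widetilde\varphi_i(x_i),
\]
which is the $i$-th line of \eqref{eq:firstordoptsystem}. The only mildly delicate point is the differentiability of $h_p$ at the origin in the regime $1<p<2$ (where the Hessian does not exist); this is handled by the trivial observation $|h|^p=o(|h|)$ as $h\to 0$ for $p>1$, so that $Dh_p(0)=0$ in the classical sense and the first-order condition remains valid even at singular points with $x_i=\xp(\xvec)$, where both sides of the identity vanish.
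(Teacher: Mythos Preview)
Your proof is correct and follows essentially the same approach as the paper: you show that the map $y\mapsto \lambda_i h_p(y-\xp(\xvec))-\widetilde\varphi_i(y)$ attains its minimum at $y=x_i$ by combining the admissibility constraint, the saturation identity on $\spt\gammap$, and the envelope inequality for $c_p$, and then differentiate. The paper packages the same inequalities as the nonnegativity of $(x_1',\dots,x_N',z)\mapsto\sum_j\lambda_j|x_j'-z|^p-\sum_j\widetilde\varphi_j(x_j')$ with equality at $(\xvec,\xp(\xvec))$, but the content is identical; your treatment of the $\gammap$-null set and of the differentiability of $h_p$ at the origin for $1<p<2$ is somewhat more explicit than the paper's.
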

\begin{proof}
Thanks to their optimality, the functions $\widetilde\varphi_1,\dots,\widetilde\varphi_N$ satisfy \eqref{eq:equality on supp}. In particular, for every fixed $\xvec=(\xdots)\in\spt\gammap$ with $z=\xp(\xvec)$, $x_i$ is a minimum of the nonnegative function \begin{align*}
    (x_1,\dots,x'_i,\dots,x_N,z)&\mapsto \lambda_i|x'_i-z|^p -\widetilde\varphi_i(x'_i)+\sum_{j\neq i}^N\lambda_j|x_j-z|^p-\sum_{j\neq i} \widetilde\varphi_j(x_j)\\&=\lambda_ih_p(x'_1-z)-\widetilde\varphi_i(x'_i)+\sum_{j\neq i}^N\lambda_jh_p(x_j-z)-\sum_{j\neq i}^N \widetilde\varphi_j(x_j),
\end{align*}
for every $i=1,\dots,N$.
The conclusion then follows from the differentiability of $h_p$ and the $\LL^d$-a.e. differentiability of $\widetilde\varphi_i$ on the interior of the convex hull of $\spt\mu_i$ for every $i=1,\dots, N$. Indeed, since the Lebesgue measure of the interior of a convex set is equal to the Lebesgue measure of the set and, since $\mu_i\ll \LL^d$, we have that $\widetilde\varphi_i$ is differentiable $\mu_i$-a.e.
\end{proof}

\begin{proof}[Proof of part \ref{th:sparsity-B} of Theorem~\ref{th:sparsityplan}].
Since the measures $\mu_1,\dots,\mu_N$ are all absolutely continuous w.r.t.\ $\LL^d$, the Kantorovich potentials $\widetilde\varphi_1, \dots, \widetilde\varphi_N$ (given by Theorem~\ref{th:duality}) satisfy the first-order optimality system by Proposition~\ref{prop:firstordoptsystem}. 
In particular, appealing to \eqref{eq:firstordoptsystem}, if $\xvec\in\spt\gammap$, 
\begin{equation*}
    Dh_p(x_i-\xp(\xvec))=p|x_i-\xp(\xvec)|^{p-2}(x_i-\xp(\xvec))=\lambda_i^{-1} D\varphi_i(x_i),
\end{equation*}hence
\begin{equation}\label{eq:baryfunctionx1}
    \xp(\xvec)=x_i-Dh_p^{-1}\left(\lambda_i^{-1} D\varphi_i(x_i)\right)=x_i-(p\lambda_i)^{-\frac{1}{p-1}} |D\varphi_i(x_i)|^{\frac{2-p}{p-1}} D\varphi_i(x_i)=:g_i(x_i)
\end{equation}
for every $i=1,\dots,N$, where $g_i$ is the optimal transport map (in the two-marginal sense) from $\mu_i$ to $\nup$. With the same argument as the one in the proof of \ref{th:sparsity-A} of Theorem \ref{th:sparsityplan}, we get that the functions $g_i$ are invertible for  $i=1,\dots,N$ (with an a.e.-inverse that we called $f_i$ in that proof), since $\nup=(\xp)_\sharp\gammap$ is absolutely continuous by Theorem~\ref{th:absolutecontinuitypbig}. Then
\begin{align*}
    x_i=g_i^{-1}(\xp(\xvec))=g_i^{-1}(g_1(x_1)), \quad \text{for every } i=2,\dots,N, 
\end{align*}
which, together with \eqref{eq:baryfunctionx1}, proves \eqref{eq:mapT-degenerate}.
\end{proof}

\begin{proof}[Proof of Lemma~\ref{lem:equi-Lipschitz}]
    Let $z\in B_{R}(0)$. We claim that there exists a radius $\widetilde{R} <\infty$ (depending only on $R, M_R, p, \lambda$) such that 
    \begin{align}\label{eq:inf-bounded}
        \varphi^{\lambda,p}(z) = \inf_{x\in B_{\widetilde{R}}(0)} \left(\lambda |x-z|^p - \varphi(x) \right)
    \end{align}
    for any $\varphi \in \mathcal{F}$. 
    Assuming the validity of \eqref{eq:inf-bounded}, the claim then follows from the the fact that $z\mapsto \lambda |x-z|^p$ is Lipschitz on $B_R(0)$ with Lipschitz constant bounded by $L_R \coloneq p\lambda (\widetilde{R}^{p-1} + R^{p-1})$, independent of $x\in B_{\widetilde{R}}(0)$. Hence, \eqref{eq:inf-bounded} implies that $\{\varphi^{\lambda,p}\}_{\varphi\in\mathcal{F}}$ is a family of Lipschitz continuous fuinctions on $B_{R}(0)$ with Lipschitz constant bounded by $L_R$. 
    
    It remains to prove \eqref{eq:inf-bounded}. To this end, take a minimizing sequence for $\varphi^{p,\lambda}(z)$, that is
    \begin{align*}
        \varphi^{\lambda,p}(z) = \lim_{n \to\infty} \left(\lambda |x_n - z|^p - \varphi(x_n)\right).
    \end{align*}
    If there exists $N\in\NN$ such that $|z-x_n| \leq R$ for all $n\geq N$, then $|x_n| \leq |z| + R \leq 2 R$. Hence we may take $\widetilde{R} = 2R$ in this case. 
    Let us therefore treat the case that along a subsequence (still denoted by $x_n$) there holds $|z-x_n| > R$. Set 
    \begin{align*}
        z_n \coloneq z - \frac{R}{2} \frac{z-x_n}{|z-x_n|}.
    \end{align*}
    Then $z_n - x_n = (1 - \frac{R}{2|z-x_n|}) (z-x_n)$ and $z-z_n = \frac{R}{2} \frac{z-x_n}{|z-x_n|}$, in particular $z_n \in B_{\frac{R}{2}}(z) \subset B_{2R}(0)$. By assumption \eqref{eq:c-transform-bdd} it follows that 
    \begin{align*}
        |\varphi^{\lambda,p}(z_n)| \leq M_R \quad \text{for all } n\in\NN. 
    \end{align*}
    Since $\{x_n\}_{n\in\NN}$ is a minimizing sequence, we may assume (by passing to a further subsequence if necessary) that for all $n\in\NN$ we have 
    \begin{align*} 
        \lambda |z-x_n|^p - \varphi(x_n) \leq \varphi^{\lambda,p}(z) + M_R \leq 2 M_R. 
    \end{align*}
    Moreover, by definition of the $\lambda h_p$-conjugate and the bound \eqref{eq:c-transform-bdd} there holds
    \begin{align*}
        \lambda |z_n-x_n|^p - \varphi(x_n) \geq \varphi^{\lambda,p}(z_n) \geq - M_R. 
    \end{align*}
    We conclude that 
    \begin{align}\label{eq:equi-L-lower-bound}
        |z-x_n|^p - |z_n - x_n|^p \leq \frac{3M_R}{\lambda} \quad \text{for all } z \in B_R(0) \text{ and } n \in \NN.
    \end{align}
    Convexity of $h_p = |\cdot|^p$ now implies that 
    \begin{align*}
        |z-x_n|^p - |z_n - x_n|^p 
        &\geq D h_p(z_n - x_n) (z-z_n) 
        = p |z_n-x_n|^{p-2} (z_n - x_n)\cdot (z-z_n) \\
        &= p |z_n-x_n|^{p-1} \left(1 - \frac{R}{2|z-x_n|}\right)^{p-1} \frac{R}{2} 
        \geq \frac{pR}{2^p} |z-x_n|^{p-1},
    \end{align*}
    where in the last estimate we used that $|z-x_n| > R$, and therefore $1 - \frac{R}{2|z-x_n|} > \frac{1}{2}$. Hence, together with \eqref{eq:equi-L-lower-bound} we obtain
    \begin{align*}
        |z-x_n| \leq \left( \frac{3\cdot 2^p M_R}{\lambda p R} \right)^{\frac{1}{p-1}},
    \end{align*}
    in particular $|x_n| \leq R + \left( \frac{3\cdot 2^p M_R}{\lambda p R} \right)^{\frac{1}{p-1}} \eqcolon \widetilde{R}$, which proves \eqref{eq:inf-bounded}.
\end{proof}
\section{\texorpdfstring{$p$}{p}-Wasserstein barycenters in one dimension}\label{sec:one-dim}
Here we provide a more detailed description of $p$-Wasserstein barycenters of probability measures on the line. In particular, we explain the statistical meaning of $p$-Wasserstein barycenters, give illustrative examples highlighting the role of the parameter $p$, and discuss the two natural limits $p\to 1$ and $p\to\infty$. 

\begin{figure}[H]
  \includegraphics[width=0.32\textwidth]{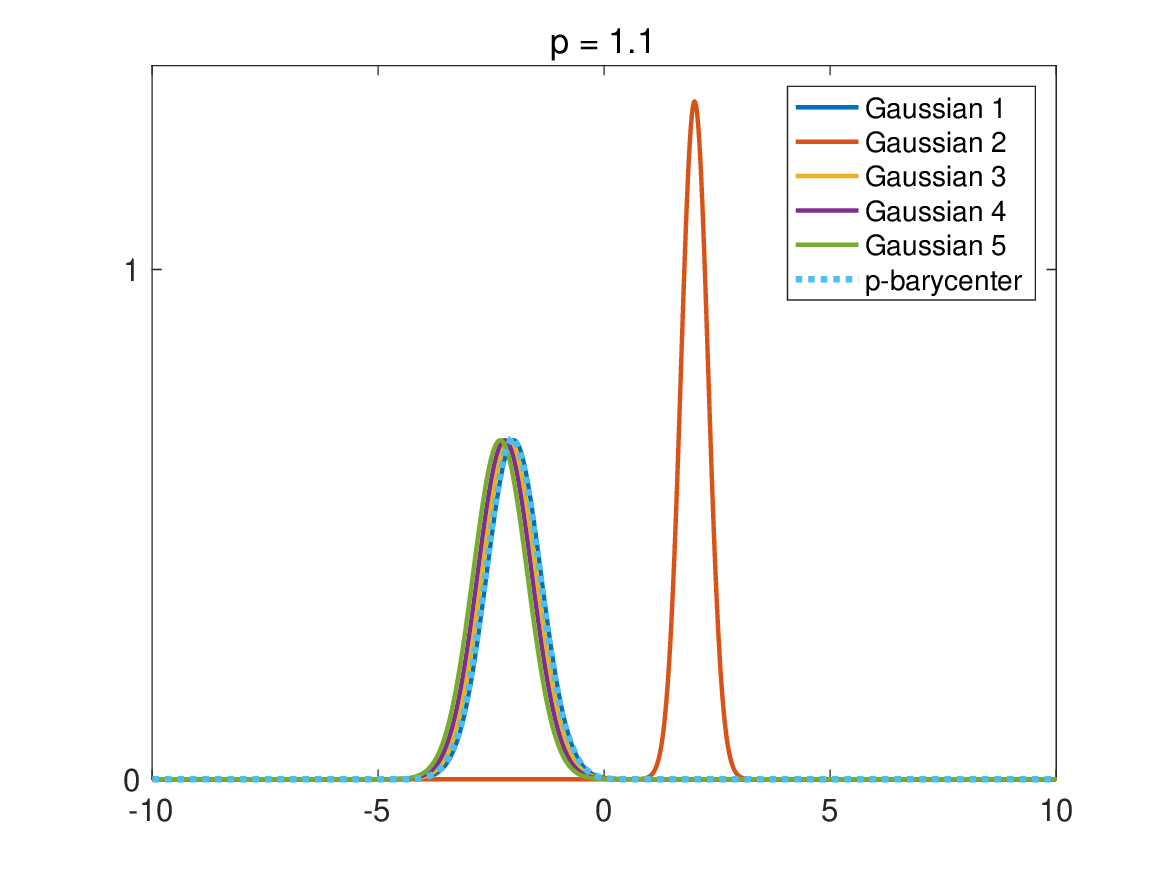}
    \includegraphics[width=0.32\textwidth]{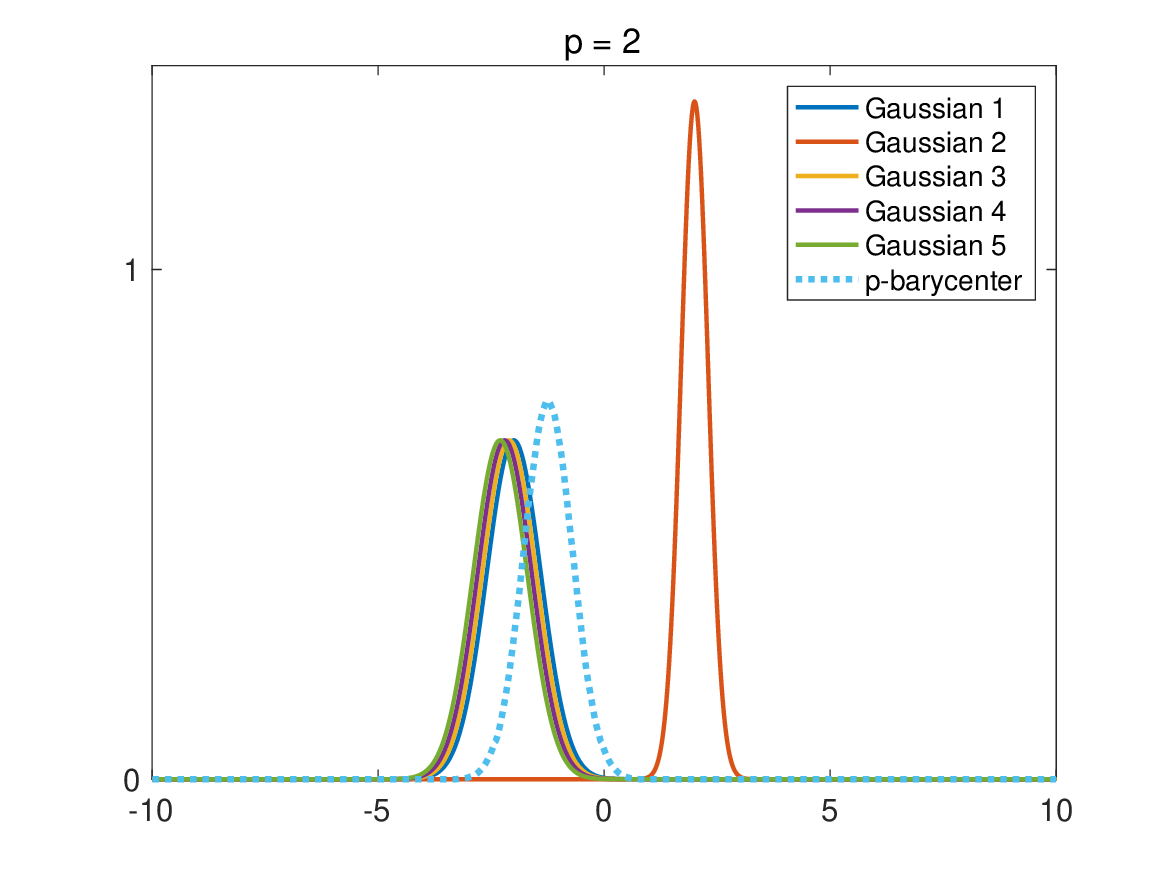}
    \includegraphics[width=0.32\textwidth]{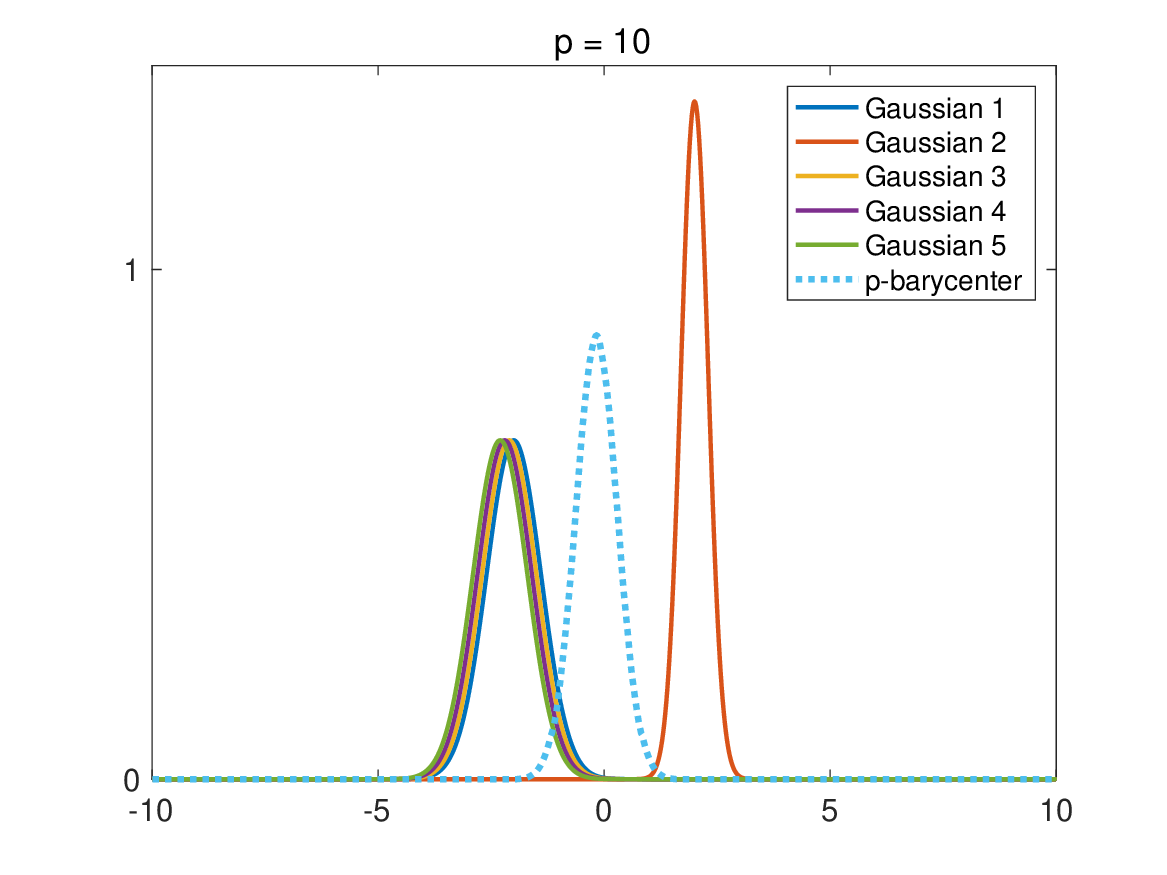}
    \caption{\footnotesize $p$-Wasserstein barycenter of five Gaussians, four of them similar and one very different, for different values of $p$. Note that for $p\approx 1$ the $p$-barycenter is insensitive to the very different Gaussian, whereas for $p\gg 1$ it is a half-and-half transport average between the two types of Gaussians which is insensitive to the fact that there is only one copy of the very different Gaussian.}
    \label{Fig:examples}
\end{figure}

Recall that for any probability measure on the line, i.e. $\mu\in\calP(\RR)$, the cumulative distribution function $F\, : \, \RR\to[0,1]$ is defined by 
$$
    F(x) := \mu((-\infty,x]),
$$
and its generalized inverse $F^{-1} \, : \, (0,1)\to\RR$ (the inverse distribution function) is defined by 
$$
   F^{-1}(t) := \inf\{y\in\RR\, : \, F(y)\ge t\}, \quad t\in(0,1).
$$
Recall also the statistical meaning of $Q=F^{-1}(y)$: $Q$ is the $y$-quantile of $\mu$, that is, the threshold point such that the $y^{\text{th}}$ part of the mass of $\mu$ lies to the left of $Q$ and the $(1-y)^{\text{th}}$ part lies to the right of $Q$. In particular, $F^{-1}(\frac{1}{2})$ is the median of $\mu$. 

It is well known that in one dimension the $p$-Wasserstein distance between two probability measures $\mu$ and $\nu$ agrees with the $L^p$ distance between their inverse distribution functions, 
\begin{equation} \label{pWass1D}
   W_p(\mu,\nu)^p = \int_0^1 |F^{-1}(y)-G^{-1}(y)|^p \,\dd y, \quad  1<p<\infty.
\end{equation}
This, together with Proposition \ref{prop:equivalence}, readily yields the following characterization of the $p$-Wasserstein barycenter: 
\begin{theorem}  \label{thm:1D}
Let $p\in(1,\infty)$. For any absolutely continuous probability measures $\mu_1,...,\mu_N\in\calP_p(\RR)$, their  $p$-Wassderstein barycenter is characterized by the property that 
\begin{equation} \label{1Deq}
   G_p^{-1}(y) = \xbar_p\left(F_1^{-1}(y),...,F_N^{-1}(y)\right) = \argmin_{z\in\RR} \sum_{i=1}^N \lambda_i|F_i^{-1}(y) - z|^p \; \forall y\in(0,1),
\end{equation}
that is to say its inverse distribution function at any point $y$ is the classical $p$-barycenter of the values of the inverse distribution functions of the $\mu_i$ at $y$. 
\end{theorem}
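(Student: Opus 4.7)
I would work directly with the coupled two-marginal formulation \eqref{C2Mpbary} — by Proposition~\ref{prop:equivalence} its minimizers are precisely the $p$-Wasserstein barycenters — and exploit the one-dimensional identity \eqref{pWass1D}, which turns the objective into an integral over the quantile variable $y\in(0,1)$ of a pointwise cost in $z$. The plan is to minimize the integrand pointwise in $y$ and then verify that the resulting candidate function is itself a legitimate inverse distribution function.

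\textbf{Step 1: reduction to a pointwise problem.} For any $\nu\in\calP_p(\RR)$ with cumulative distribution function $G$, formula \eqref{pWass1D} gives
\[
    \sum_{i=1}^N \lambda_i W_p^p(\mu_i,\nu) \;=\; \int_0^1 \sum_{i=1}^N \lambda_i \,|F_i^{-1}(y)-G^{-1}(y)|^p \,\dd y.
\]
For each fixed $y$ the integrand is minimized uniquely in $z$, by definition \eqref{eq:p-barycenter} of the classical $p$-barycenter, at $H(y):=\xbar_p(F_1^{-1}(y),\dots,F_N^{-1}(y))$. Hence any competitor $\nu$ is beaten pointwise by this choice provided $H$ is itself admissible as an inverse distribution function.

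\textbf{Step 2: monotonicity of $H$.} The crux is to show that $y\mapsto H(y)$ is non-decreasing. In one dimension, $\xbar_p(x_1,\dots,x_N)$ is characterized by \eqref{eq:euler-lag bary}, i.e.\ as the unique root in $z$ of
\[
    \Phi(x_1,\dots,x_N;z)\;:=\;\sum_{i=1}^N \lambda_i\,|x_i-z|^{p-1}\sgn(x_i-z)\;=\;0.
\]
For fixed $z$, each summand is non-decreasing in $x_i$ (as the derivative of the convex function $x_i\mapsto \tfrac{1}{p}|x_i-z|^p$); for fixed $(x_1,\dots,x_N)$, $\Phi$ is strictly decreasing in $z$. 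Given $y_1\le y_2$, monotonicity of each $F_i^{-1}$ yields $F_i^{-1}(y_1)\le F_i^{-1}(y_2)$ for all $i$, hence $\Phi(F_1^{-1}(y_2),\dots,F_N^{-1}(y_2);H(y_1))\ge \Phi(F_1^{-1}(y_1),\dots,F_N^{-1}(y_1);H(y_1))=0$. Strict monotonicity of $\Phi$ in $z$ then forces $H(y_2)\ge H(y_1)$.

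\textbf{Step 3: conclusion.} Once $H$ is non-decreasing, (a suitable left-continuous modification of) it is the inverse distribution function of the push-forward $\nu_p$ of Lebesgue measure on $(0,1)$ by $H$. Proposition~\ref{prop: bary in convex hull } places $H(y)$ in the convex hull of $\{F_1^{-1}(y),\dots,F_N^{-1}(y)\}$, so integrability of $|H|^p$ follows from $\mu_i\in\calP_p(\RR)$ and $\nu_p\in\calP_p(\RR)$ is an admissible competitor for \eqref{C2Mpbary}. Since $\nu_p$ attains the pointwise infimum of the integrand in Step~1, it minimizes \eqref{C2Mpbary}, and uniqueness of the minimizer under absolute continuity of the marginals (Theorem~\ref{th:absolutecontinuitypsmall}, together with the strict convexity of $z\mapsto\sum_i\lambda_i|F_i^{-1}(y)-z|^p$) identifies its inverse distribution function with $H$, proving \eqref{1Deq}.

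\textbf{Main obstacle.} The only non-routine step is the monotonicity argument in Step~2; everything else is a direct assembly of Proposition~\ref{prop:equivalence}, the one-dimensional identity \eqref{pWass1D}, and the defining optimality condition for the classical $p$-barycenter. The monotonicity argument itself is short once one writes the Euler–Lagrange equation in the separated form involving $\sgn(x_i-z)$, which isolates the joint monotonicity in the $x_i$'s from the monotonicity in $z$.
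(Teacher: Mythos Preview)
Your proposal is correct and follows essentially the same route as the paper: reduce \eqref{C2Mpbary} via \eqref{pWass1D} to an integral over $y\in(0,1)$, minimize the integrand pointwise, and then check that the pointwise minimizer $H$ is itself an inverse distribution function. The paper outsources that last check to a reference (\cite{F24}), whereas you supply it directly via the monotonicity argument in Step~2; your argument there is clean and correct.

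One small remark on Step~3: the citation of Theorem~\ref{th:absolutecontinuitypsmall} for uniqueness is off-target --- that result gives absolute continuity of the barycenter, not uniqueness. But you already have the right ingredient in hand: strict convexity of $z\mapsto\sum_i\lambda_i|F_i^{-1}(y)-z|^p$ forces any minimizer's inverse distribution function to coincide with $H(y)$ for almost every $y$, hence to equal $\nu_p$. No appeal to the absolute-continuity theorem is needed.
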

\begin{proof} By Proposition \ref{prop:equivalence} the inverse distribution function $G_p^{-1}$ of the $p$-barycenter minimizes the integral 
$$
  \int_0^1 \sum_{i=1}^N \lambda_i |F_i^{-1}(y)-G_p^{-1}(y)|^p dy.
$$
But the minimum over arbitrary measurable functions (not required to be any inverse distribution function) is obviously achieved by minimizing the integrand pointwise, and a straightforward adaptation of the argument in \cite{F24} for $p=2$ shows that the pointwise minimizer is in fact the inverse distribution function of some probability measure on $\RR$.
\end{proof}

From now on let us assume $\lambda_1=...=\lambda_N=\frac{1}{N}$. It is clear that the following limits of the function in \eqref{1Deq} exist: 
\begin{eqnarray} \label{1Deq1}
    G_1^{-1}(y) &= \;\lim\limits_{p\to 1} G_p^{-1}(y) &= \; F_{\mathrm{mid}}^{-1}(y) \;\;\;\;\; 
    \mbox{ when $N$ is odd } \\
    \label{1Deqinfty}
    G_\infty^{-1}(y) &= \;\lim\limits_{p\to\infty} G_p^{-1}(y) &= \;\frac{F_{\mathrm{max}}^{-1}(y) + F_{\mathrm{min}}^{-1}(y)}{2}.
\end{eqnarray}
Here for any $N$ real numbers $z_1,...,z_N$, when $z_{\sigma(1)}\le z_{\sigma(2)}\le ... \le z_{\sigma(N)}$ is their monotone ordering, $z_{\mathrm{min}}=z_{\sigma(1)}$ denotes their minimum, $z_{\mathrm{max}}=z_{\sigma(N)}$ denotes their maximum, and, for $N$ odd, $z_{\mathrm{mid}}=z_{\sigma(\tfrac{N+1}{2})}$ denotes their median.

As in the case $p\in(1,\infty)$, one can show that the functions in \eqref{1Deq1}, \eqref{1Deqinfty} are inverse distribution functions of unique probability measures $\nu_1$ respectively $\nu_\infty$, providing us with a unique definition of the $1$-Wasserstein barycenter and the $\infty$-Wasserstein barycenter,
\begin{eqnarray}
    && \mathrm{bar}_1((\mu_i)_{i=1,...,N}):=\nu_1, \\
    && \mathrm{bar}_\infty((\mu_i)_{i=1,...,N}) := \nu_\infty.
\end{eqnarray}
Equations \eqref{1Deq}, \eqref{1Deq1}, \eqref{1Deqinfty} provide us with the following statistical meaning of $p$-Wasserstein barycenters of a given collection $\mu_1,...,\mu_N$ of probability measures.
\begin{itemize}
    \item The quantile of the $p$-barycenter is the Euclidean $p$-barycenter of the quantiles of the $\mu_i$. 
    \item For $p=2$, the quantile of the $p$-barycenter is the average of the quantiles of the $\mu_i$. In particular, the $p$-barycenter shows some sensitivity to outliers.
    \item In the limit $p\to 1$, and for $N$ odd, the quantile of the $p$-barycenter approaches the median of the quantiles of the $\mu_i$. Thus the $p$-barycenter becomes less and less sensitive to outliers, instead reflecting the behaviour of `typical' $\mu_i$'s. 
    \item In the limit $p\to\infty$ the quantile of the $p$-barycenter approaches the arithmetic mean of the 
    largest and smallest quantile of the $\mu_i$. Thus the $p$-barycenter becomes more and more indicative of outliers, consisting of a transport interpolation between them. 
\end{itemize}
These properties of the $p$-Wasserstein barycenter are illustrated in Figures \ref{Fig:examples} and \ref{Fig:examples2}. 

\begin{figure}[H]
      \includegraphics[width=0.32\textwidth]{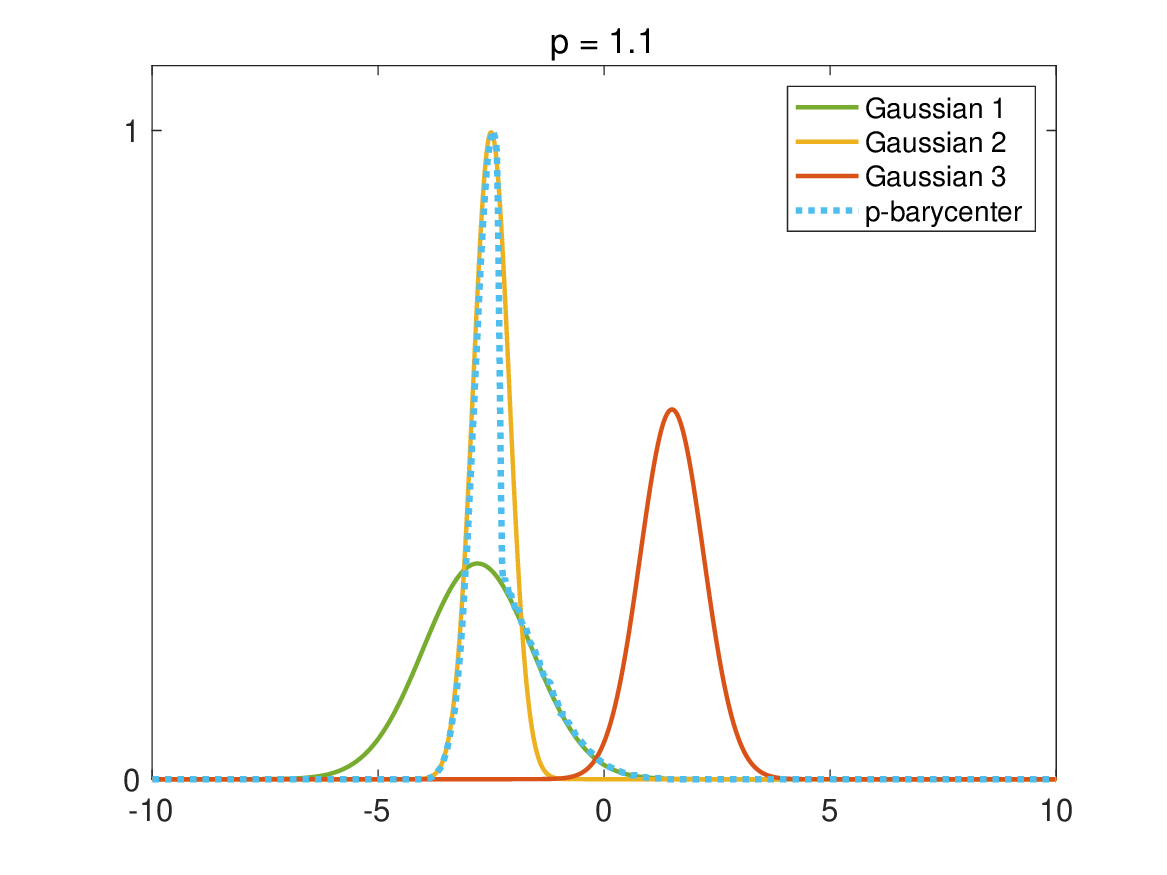}
    \includegraphics[width=0.32\textwidth]{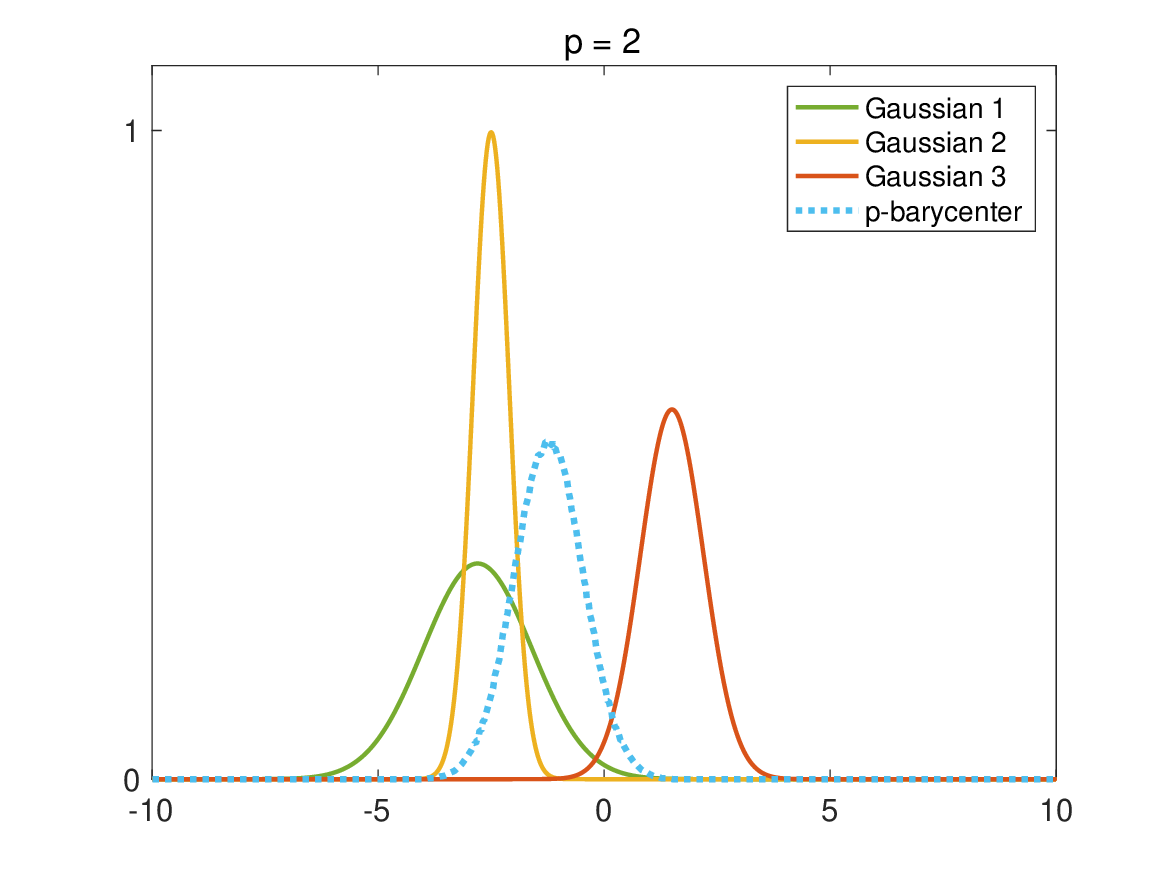}
    \includegraphics[width=0.32\textwidth]{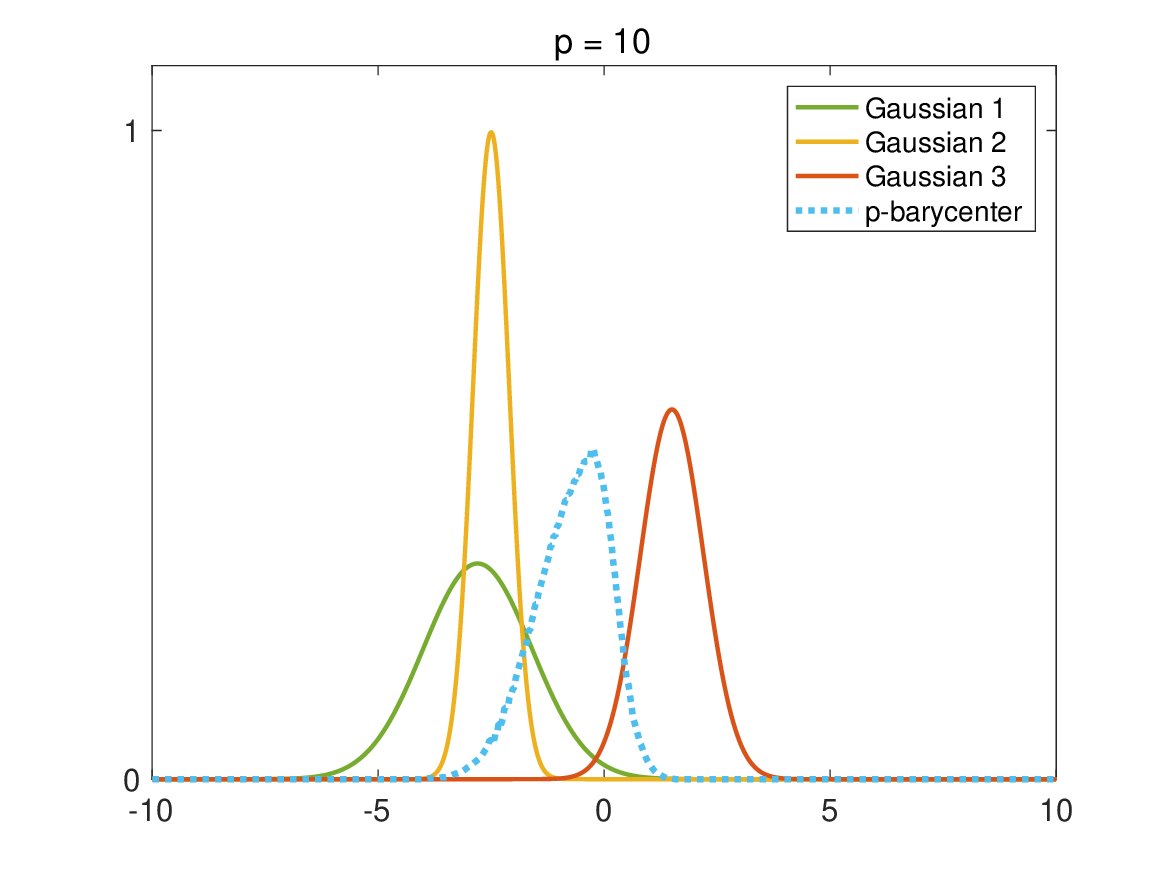} \\
    \includegraphics[width=0.32\textwidth]{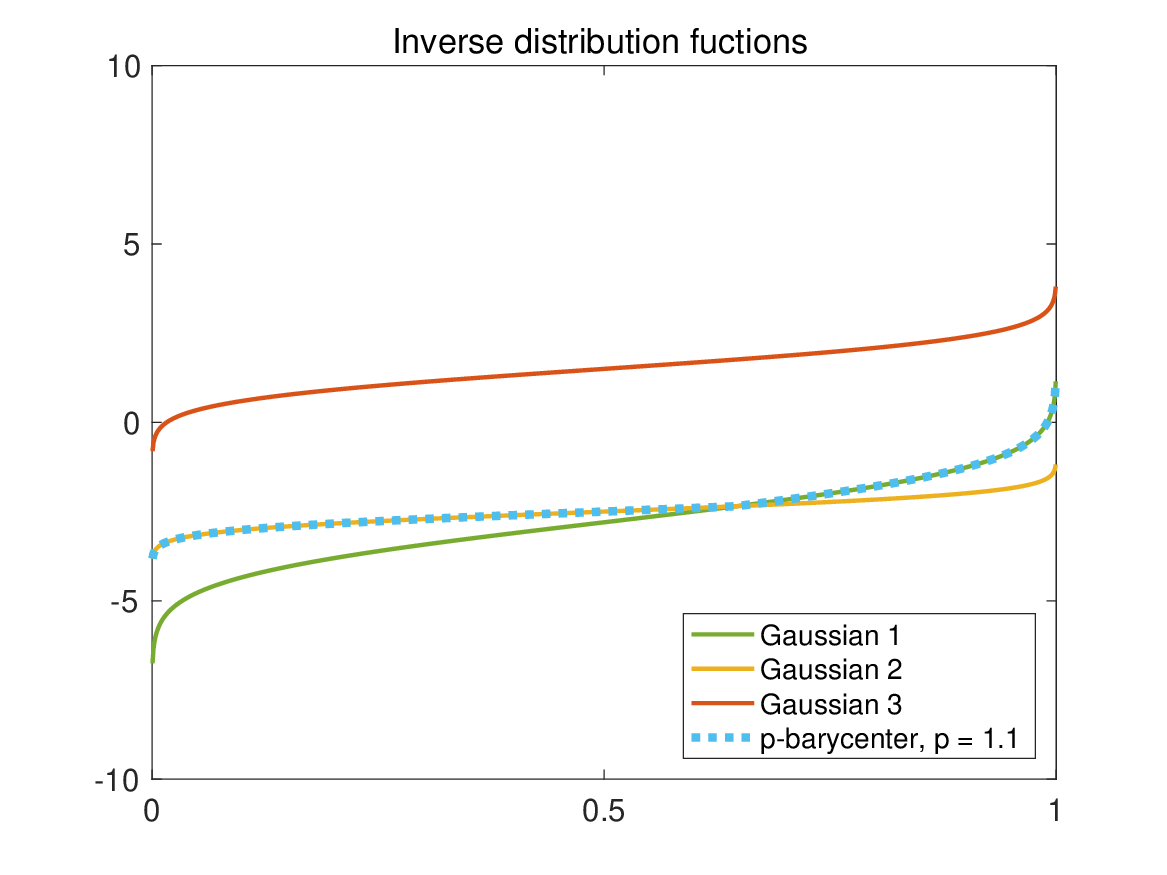}
    \includegraphics[width=0.32\textwidth]{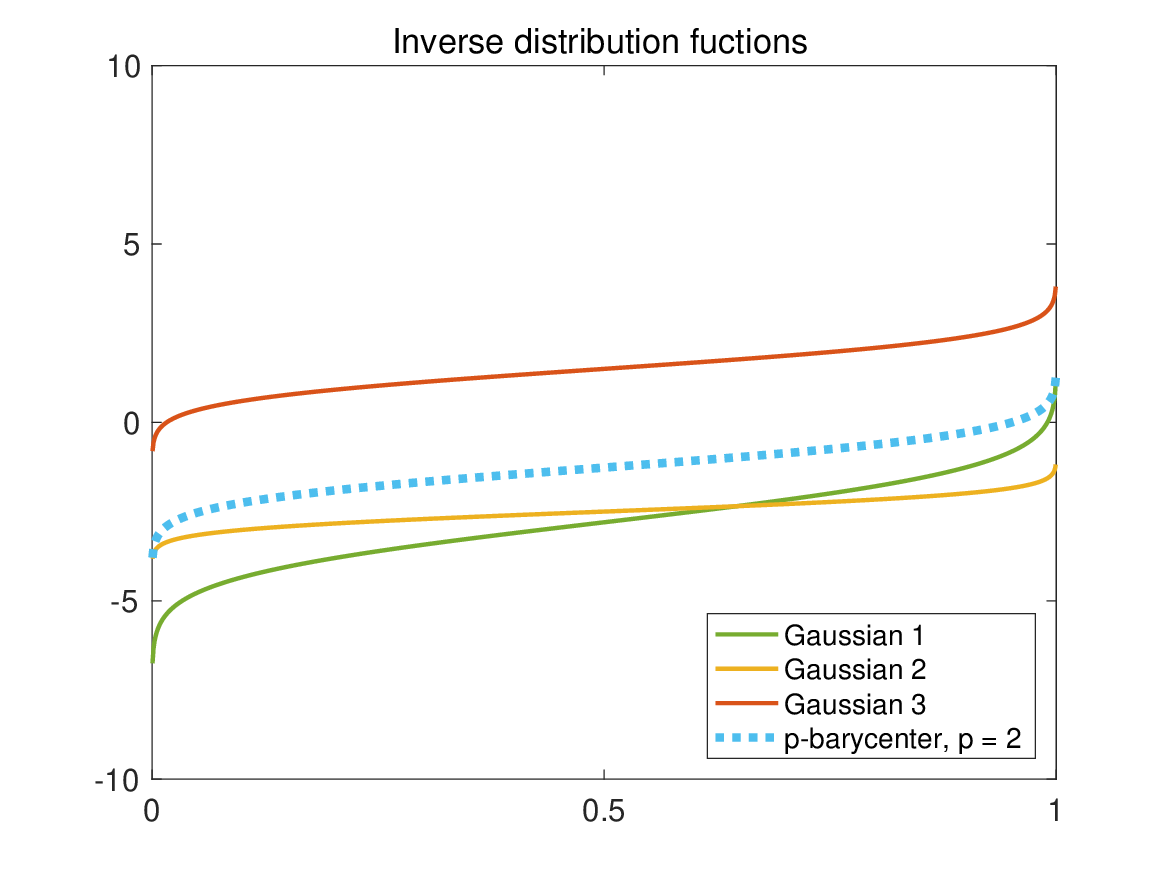}
    \includegraphics[width=0.32\textwidth]{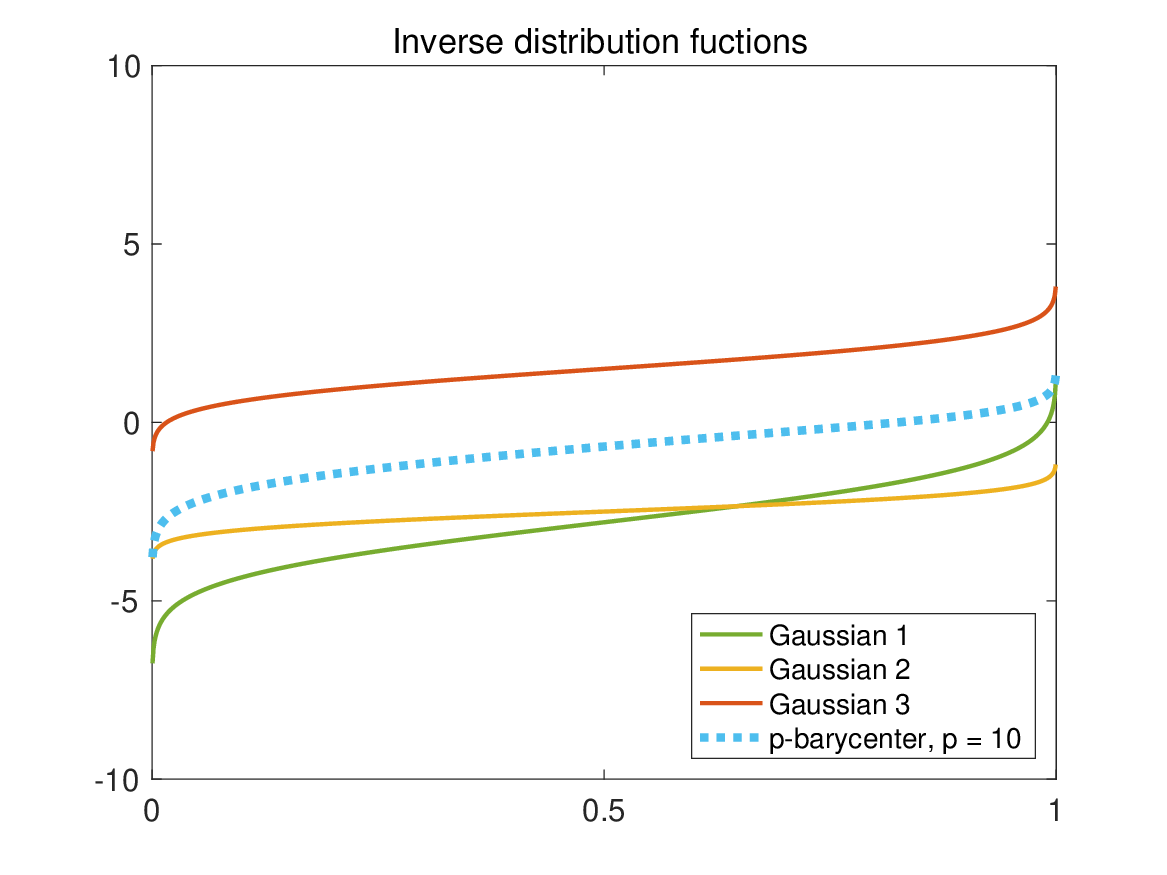}
    \caption{\footnotesize 
    $p$-Wasserstein barycenter of three general Gaussians, for different values of $p$. In terms of inverse distribution functions, for $p=1.1$ the barycenter closely corresponds to taking the median, whereas for $p=10$ it is close to the arithmetic mean of the outer inverse distribution functions, confirming the asymptotic formulae \eqref{1Deq1} and \eqref{1Deqinfty}.}
    \label{Fig:examples2}
\end{figure}
In summary, when $p$ is increased, the $p$-barycenter crosses over from representing `typical' data (and ignoring outliers) to representing a transport average between outliers (and ignoring `typical' data).

{
\addtocontents{toc}{\setcounter{tocdepth}{-10}}
\section*{Acknowledgments}
CB was funded by \emph{Deutsche Forschungsgemeinschaft (DFG -- German Research Foundation) -- Project-ID 195170736 -- TRR109}.  
This work was partially carried out while TR was a visiting research fellow at TU München, funded by 
\emph{Deutsche Forschungsgemeinschaft (DFG -- German Research Foundation) -- Project-ID 195170736 -- TRR109}.

CB would like to thank the Max Planck Institute for Mathematics in the Sciences in Leipzig, and TR would like to thank the Department of Mathematics at TU Munich for their kind hospitality. 

\addtocontents{toc}{\setcounter{tocdepth}{1}}
}

\bigskip

\begin{thebibliography}{MMMM99}

\bibitem[AC11]{AC11}
	\textsc{M. Agueh}, and \textsc{G. Carlier},
	\newblock Barycenters in the Wasserstein space.
	\newblock \doi{10.1137/100805741}{\emph{SIAM Journal on Mathematical Analysis}} \textbf{43} (2011), no. 2, 904--924.
	\newblock \mr{2801182}.
	\newblock \zbl{0889.49030}.
	\hfill


\bibitem[Bre91]{Bre91}
	\textsc{Y. Brenier},
	\newblock Polar factorization and monotone rearrangement of vector-valued functions.
	\newblock \doi{10.1002/cpa.3160440402}{\emph{Communications on Pure and Applied Mathematics}} \textbf{44} (1991), no. 4, 375--483.
	\newblock \mr{1100809}.
	\newblock \zbl{0738.46011}.
	\hfill

\bibitem[BFR24]{BFR24-h}
    \textsc{C. Brizzi, G. Friesecke}, and \textsc{T. Ried},
    \newblock $h$-Wasserstein barycenters.
    \newblock \emph{Preprint} (2024),
    \newblock \arxiv{2402.13176}.
    \hfill

\bibitem[BBI01]{BBI01}
    \textsc{D. Burago, Y. Burago}, and \textsc{S. Ivanov},
    \newblock \doi{10.1090/gsm/033}{\emph{A Course in Metric Geometry}}.
    \newblock Graduate Studies in Mathematics Vol.\ 33.
    \newblock American Mathematical Society, Providence, RI, 2001.
    \newblock \mr{1835418}.
    \newblock \zbl{0981.51016}.
    \hfill

\bibitem[Car03]{Car03} 
    \textsc{G. Carlier}, 
    \newblock On a class of multidimensional optimal transportation problems.
    \newblock \doi{}{\emph{Journal of Convex Analysis}} \textbf{10} (2003), no. 2, 517--529.
    \newblock \mr{2044434}.
    \newblock \zbl{1084.49037}.
    \hfill

\bibitem[CE10]{CE10}
	\textsc{G. Carlier}, and \textsc{I. Ekeland},
	\newblock Matching for teams.
	\newblock \doi{10.1007/s00199-008-0415-z}{\emph{Economic Theory}} \textbf{42} (2010), no. 2, 397--418.
	\newblock \mr{2564442}.
	\newblock \zbl{1183.91112}.
	\hfill

\bibitem[CP21]{CP21}
    \textsc{N.-P. Chung}, and \textsc{M.-N. Phung},
    \newblock Barycenters in the Hellinger-Kantorovich Space.
    \newblock \doi{10.1007/s00245-020-09695-y}{\emph{Applied Mathematics and Optimization}} \textbf{84} (2021), no. 2, 1791--1820.
    \newblock \mr{4304889}.
    \newblock \zbl{1480.49014}.
    \hfill
    
\bibitem[DP15]{DP15}
    \textsc{L. De Pascale},
    \newblock Optimal transport with {C}oulomb cost. {A}pproximation and duality.
    \newblock \doi{10.1051/m2an/2015035}{\emph{ESAIM: Mathematical Modelling and Numerical Analysis}} \textbf{49} (2015), no. 6, 1643--1657.
    \newblock \mr{3423269}.
    \newblock \zbl{1330.49048}.
    \hfill

\bibitem[EG15]{EG15}
    \textsc{L. C. Evans, R. F. Gariepy},
    \newblock {{Measure theory and fine properties of functions}}.
    \newblock Textbooks in Mathematics, CRC Press, Boca Raton, FL,(revised), 2015.
    \newblock \mr{3409135}.
    \newblock \zbl{}.
    \hfill

    
\bibitem[Fed96]{Fed96}
    \textsc{H. Federer},
    \newblock \doi{10.1007/978-3-642-62010-2}{\emph{Geometric Measure Theory}}.
    \newblock Reprint of the 1969 edition. 
    \newblock Classics in Mathematics, Springer Berlin, Heidelberg, 1996.
    \newblock \mr{0257325}.
    \newblock \zbl{0874.49001}.
    \hfill

\bibitem[Fri24]{F24}
	\textsc{G. Friesecke}, 
	\newblock \doi{10.1137/1.9781611978094}{\emph{Optimal Transport: a comprehensive introduction to modeling, analysis, simulation, applications}}.
	\newblock Society for Industrial and Applied Mathematics, Philadelphia, PA, 2024.
	\hfill

	
\bibitem[FMS21]{FMS21}
	\textsc{G. Friesecke, D. Matthes}, and \textsc{B. Schmitzer},
	\newblock Barycenters for the Hellinger-Kantorovich distance over $\RR^d$.
	\newblock \doi{10.1137/20M1315555}{\emph{SIAM Journal on Mathematical Analysis}} \textbf{53} (2021), no. 1, 62--110.
	\newblock \mr{4194317}.
	\newblock \zbl{1457.49032}.
	\hfill

\bibitem[GMC96]{GMC96}
	\textsc{W. Gangbo},  and \textsc{R.J. McCann},
	\newblock The geometry of optimal transportation.
	\newblock \doi{10.1007/BF02392620}{\emph{Acta Mathematica}} \textbf{177} (1996), no. 2, 113--161.
	\newblock \mr{1440931}.
	\newblock \zbl{0887.49017}.
	\hfill

\bibitem[GS98]{GS98}
	\textsc{W. Gangbo}, and \textsc{A. \'Swie\k{}ch},
	\newblock Optimal maps for the multidimensional Monge-Kantorovich problem.
	\newblock \doi{10.1002/(SICI)1097-0312(199801)51:1<23::AID-CPA2>3.0.CO;2-H}{\emph{Communications on Pure and Applied Mathematics}} \textbf{51} (1998), no. 1, 23--45.
	\newblock \mr{1486630}.
	\newblock \zbl{0889.49030}.
	\hfill

\bibitem[Hei02]{Hei02}
    \textsc{H. Heinich},
    \newblock Problème de Monge pour $n$ probabilités.
    \newblock \doi{10.1016/S1631-073X(02)02341-5}{\emph{Comptes Rendus Mathématique. Académie des Sciences, Paris}} \textbf{334} (2002), no. 9, 793--795.
    \newblock \mr{1905042}.
    \newblock \zbl{0996.60025}.
    \hfill


\bibitem[Kel84]{K84}
    \textsc{H.G. Kellerer},
    \newblock Duality theorems for marginal problems.
    \newblock \doi{10.1007/BF00532047}{\emph{Zeitschrift für Wahrscheinlichkeitstheorie und Verwandte Gebiete}} \textbf{67} (1984), no. 4, 399--432.
    \newblock \mr{0761565}.
    \newblock \zbl{0535.60002}.
    \hfill

\bibitem[KP14]{KP14}
	\textsc{Y.-H. Kim}, and \textsc{B. Pass},
	\newblock A general condition for Monge solutions in the multi-marginal optimal transport problem.
	\newblock \doi{10.1137/130930443}{\emph{SIAM Journal on Mathematical Analysis}} \textbf{46} (2014), no. 2, 1538--1550.
	\newblock \mr{3190751}.
	\newblock \zbl{1293.49109}.
	\hfill

\bibitem[KP17]{KP17}
	\textsc{Y.-H. Kim}, and \textsc{B. Pass},
	\newblock Wasserstein barycenters over Riemannian manifolds.
	\newblock \doi{10.1016/j.aim.2016.11.026}{\emph{Advances in Mathematics}} \textbf{307} (2017), 640--683.
	\newblock \mr{3590527}.
	\newblock \zbl{1373.60006}.
	\hfill

\bibitem[LL17]{LL17}
	\textsc{T. Le Gouic}, and \textsc{J.-M. Loubes},
	\newblock Existence and consistency of Wasserstein barycenters.
	\newblock \doi{10.1007/s00440-016-0727-z}{\emph{Probability Theory and Related Fields}} \textbf{168} (2017), no. 3-4, 901--917.
	\newblock \mr{3663634}.
	\newblock \zbl{1406.60019}.
	\hfill

\bibitem[Pas11]{Pas11}
	\textsc{B. Pass},
	\newblock Uniqueness and Monge solutions in the multimarginal optimal transportation problem.
	\newblock \doi{10.1137/100804917}{\emph{SIAM Journal on Mathematical Analysis}} \textbf{43} (2011), no. 6, 2445--2777.
	\newblock \mr{2873240}.
	\newblock \zbl{1248.49061}.
	\hfill	

\bibitem[Pas12]{Pas12}
	\textsc{B. Pass},
	\newblock On the local structure of optimal measures in the multi-marginal optimal transportation problem.
	\newblock \doi{10.1007/s00526-011-0421-z}{\emph{Calculus of Variations and Partial Differential Equations}} \textbf{43} (2012), no. 3-4, 529--536.
	\newblock \mr{2875651}.
	\newblock \zbl{1231.49037}.
	\hfill	

\bibitem[Pas14]{Pas14}
	\textsc{B. Pass},
	\newblock Multi-marginal optimal transport and multi-agent matching problems: uniqueness and structure of solutions.
	\newblock \doi{10.3934/dcds.2014.34.1623}{\emph{Discrete and Continuous Dynamical Systems. Series A}} \textbf{34} (2014), no. 4, 1623--1639.
	\newblock \mr{3121634}.
	\newblock \zbl{1278.49054}.
	\hfill	


\bibitem[San15]{San15}
	\textsc{F. Santambrogio}, 
	\newblock \doi{10.1007/978-3-319-20828-2}{\emph{Optimal transport for applied mathematicians}}.
	\newblock Calculus of Variations, PDEs, and Modeling. 
	\newblock Progress in Nonlinear Differential Equations and their Applications \textbf{87}, 
	\newblock Birkhäuser, Cham, 2015.
	\newblock \mr{3409718}.
	\newblock \zbl{1401.49002}.
	\hfill

\bibitem[Vil09]{Vil09}
	\textsc{C. Villani},
	\newblock \doi{10.1007/978-3-540-71050-9}{\emph{Optimal transport: old and new}}. 
	\newblock Grundlehren der Mathematischen Wissenschaften \textbf{338}, 
	\newblock Springer-Verlag, Berlin, 2009.
	\newblock \mr{2459454}.
	\newblock \zbl{1156.53003}.
	\hfill

\end{thebibliography}
\end{document}